\setlist{nosep}
\let\mcnewpage=\newpage
\newcommand{\TrickSupertabularIntoMulticols}{%
\renewcommand\newpage{%
    \if@firstcolumn%
        \hrule width\linewidth height0pt%
            \columnbreak%
        \else%
          \mcnewpage%
        \fi%
}%
}
\def\figdir{Pictures/}
\newcommand{\newautotheorem}[3] 
{
\newaliascnt{#1}{#2}
\newtheorem{#1}[#1]{#3}
\aliascntresetthe{#1}
\expandafter\def\csname #1autorefname\endcsname{%
#3%
}%
}
\newtheorem{theorem}{Theorem}
\theoremstyle{definition}
\newtheorem*{sticknumber}{Theorem~\ref*{thm:exact-sticks} (restated)}
\newtheorem*{asymptotics}{Theorem~\ref*{thm:asymptotic picture} (restated)}
\newcommand{\R}{\mathbb{R}}
\newcommand{\stick}{\operatorname{stick}}
\newcommand{\superbridge}{\operatorname{sb}}
\newcommand{\bridge}{\operatorname{b}}
\newcommand{\snappy}{\texttt{SnapPy}}
\newcommand{\pyknotid}{\texttt{pyknotid}}
\newcommand{\knoodle}{\texttt{Knoodle}}
\newcommand{\ceq}{\coloneqq}
\newcommand{\newbound}[1]{$\langle$\! {#1}\! $\rangle$}
\newcommand{\known}[1]{\fbox{#1}}
\let\mgp=\marginpar \marginparwidth18mm \marginparsep1mm
\def\marginpar#1{\mgp{\raggedright\tiny #1}}
\let\lbl=\label
\def\label#1{\lbl{#1}\ifinner\else\marginpar{\ref{#1} #1}\ignorespaces\fi}
\title{New upper bounds for stick numbers}
\author[$\ast$]{Jason Cantarella}
\author[$\dag$]{Andrew Rechnitzer}
\author[$\ddag$]{Henrik Schumacher}
\author[$\mathsection$]{Clayton Shonkwiler}
\affil[$\ast$]{Mathematics Department, University of Georgia, Athens, GA, USA}
\affil[$\dag$]{Mathematics Department, University of British Columbia, Vancouver, BC, Canada}
\affil[$\ddag$]{Institut für Mathematik, RWTH Aachen University, Aachen, Germany}
\affil[$\mathsection$]{Department of Mathematics, Colorado State University, Fort Collins, CO, USA}
\date{}
\begin{document}

\maketitle


\begin{abstract}
	We use a version of simulated annealing with knot-type preserving moves to find polygonal representatives of various knot types with low stick number. These give better bounds on stick numbers of prime knots through 10 crossings, and for the first time give a comprehensive table of stick number bounds on all knots through 13 crossings. These are equal to existing lower bounds (and hence determine the stick number exactly) for 19 knot types whose exact stick number was not known previously.
\end{abstract}

\section{Introduction}

The \emph{stick number} $\stick[K]$ of a knot type $K$ is the minimum number of edges in any polygon in $\R^3$ with knot type $K$. Stick numbers have been studied since the late 1980s~\cite{randellMolecularConformationSpace1988,randellConformationSpacesMolecular1988,negamiRamseyTheoremsKnots1991,calvoGeometricKnotTheory1998,calvoCharacterizingPolygonsBbb2002} as a measure of knot complexity with a direct connection to the three-dimensional geometry of curves of the knot type. Like crossing number and bridge number, stick number can be computed for a substantial number of knots. Like ropelength~\cite{grosbergFlorytypeTheoryKnotted1996,buckThicknessCrossingNumber1999,diaoLowerBoundsLengths2003,ashtonKnotTighteningConstrained2011,millettPhysicalKnotTheory2012}, it has a natural connection to the behavior of real entangled systems. This combination makes stick number an appealing area for study.

This paper reports on the results of a numerical experiment. We start with polygons with known knot types and use topology-preserving moves to minimize edge number by simulated annealing. We find low stick number configurations for all knots through 13 crossings and for some twist knots, torus knots, pretzel knots, and randomly selected alternating and non-alternating knots with up to 64 crossings. This gives upper bounds for the stick numbers of the corresponding knot types.
We suspect that these bounds are fairly tight; our code finds configurations that achieve the stick number for \emph{all} knots with fewer than 100 crossings for which the stick numbers are known exactly (mostly small knots and some torus knots).

We compute the stick numbers of 19 knot types by observing that our configurations have edge count equal to an existing lower bound:
\begin{theorem}\label{thm:exact-sticks}
	The following knots have stick number equal to 10:
	\begin{itemize}
		\item $11n_i$ with $i \in \{71,75,76,78\}$;
		\item $13n_j$ with $j \in \{225, 230, 285, 288, 307, 584, 586, 593\}$\\
		      and $j \in \{602, 603, 604, 607, 608, 1192, 5018\}$.
	\end{itemize}
\end{theorem}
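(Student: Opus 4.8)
The plan is to establish, for each of the nineteen listed knot types $K$, the two matching bounds $\stick[K] \le 10$ and $\stick[K] \ge 10$. The upper bound is precisely the kind of statement the simulated-annealing experiment is built to produce, whereas the lower bound must be imported from the literature: as the introduction stresses, the point of this theorem is that our explicit configurations happen to meet a pre-existing lower bound, pinning down the exact value.

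For the upper bound I would, for each $K$, extract from the annealing run a single explicit ten-edge closed polygon in $\R^3$ — a cyclic list of ten vertices — whose knot type is $K$. Producing such a polygon certifies $\stick[K] \le 10$ directly from the definition, but \emph{only} once its knot type has been confirmed. Since the vertices arrive as floating-point data, the essential auxiliary step is to verify rigorously that the polygon is embedded (no stick--stick intersections) and that it represents $K$ rather than some nearby type. I would do this by computing a generic projection, reading off the resulting Gauss or planar-diagram code, and feeding that code to independent knot-identification tools such as \regina, \snappy, and \pyknotid, checking that they agree on the identification.

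For the lower bound I would invoke the existing complete classification of all knot types realizable with at most nine sticks. Because each listed knot has eleven or thirteen crossings and none of them appears on that list of $\le 9$-stick knots, each must satisfy $\stick[K] \ge 10$. Combining this with the constructed configurations yields $\stick[K] = 10$ for every $K$ in the statement. An alternative route to the same inequality is the superbridge bound $\stick[K] \ge 2\,\superbridge[K]$, which would give $\stick[K] \ge 10$ whenever $\superbridge[K] \ge 5$; but since computing $\superbridge[K]$ for these knots is itself hard, I expect the classification argument to be the cleaner path.

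The main obstacle is not the construction but the \emph{certification}: I must be certain that each floating-point polygon is genuinely embedded and genuinely of the advertised type, so that the ``$\le 10$'' half of the argument is a theorem and not merely numerical evidence. Beyond this, the lower-bound half rests entirely on the completeness of the cited $\le 9$-stick classification — if that enumeration were incomplete, the lower bound, and hence the exact value $10$, would collapse. I therefore expect the delicate points to be the robustness of the knot identification across independent tools and a careful statement of which prior result supplies the matching lower bound.
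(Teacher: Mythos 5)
Your upper-bound half is fine and matches the paper: exhibit a ten-edge polygon for each knot type and certify its knot type (the paper does this with \pyknotid\ + \snappy\ and with \knoodle's diagram database, much as you propose). The problem is your lower bound. Your primary route invokes ``the existing complete classification of all knot types realizable with at most nine sticks,'' but no such classification exists in the literature. Calvo's classification of polygonal knot types is complete only through \emph{eight} edges; for nine edges the question is open. Worse, such a classification is not even in principle reducible to the standard knot tables you would want to cite: by Calvo's inequality $\stick[K] \geq \sqrt{2}\sqrt{c[K]+\nicefrac{1}{8}}+\frac{7}{2}$, a 9-stick knot can have crossing number as large as $15$, so ruling out stick number $9$ for a given 13-crossing knot by enumeration would require controlling all knots through 15 crossings --- something nobody has done. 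Absent that input, your argument gives $\stick[K]\le 10$ but not the matching inequality, and the theorem does not follow.

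The route you dismissed as impractical is in fact the one that works, and it is the paper's proof. You do not need to \emph{compute} $\superbridge[K]$; you only need to bound it below, and for exactly these nineteen knots that is possible because each is known to be a 4-bridge knot (Musick for the $11n$ knots; Blair--Kjuchukova--Morrison for the $13n$ knots). Kuiper's strict inequality combined with Jin's bound gives
\[
	\bridge[K] < \superbridge[K] \leq \tfrac{1}{2}\stick[K],
\]
so $\bridge[K]=4$ forces $\superbridge[K]\geq 5$ (superbridge index being an integer) and hence $\stick[K]\geq 10$. This is precisely why the theorem lists these particular knots and no others: they are the ones in the experiment whose 10-stick realizations meet a bridge-number-derived lower bound. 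Your proposal is missing this key idea --- the reduction of the superbridge lower bound to a known bridge index via the strictness of Kuiper's inequality --- and without it neither of your two suggested lower-bound arguments can be completed.
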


Our 10-stick $11n_{78}$ and $13n_{5018}$ are shown in \autoref{fig:10 stick exact}.

\begin{figure}[t]
	\centering
	\subfloat[$11n_{78}$]{\includegraphics[height=2in,width=2in,keepaspectratio,valign=b]{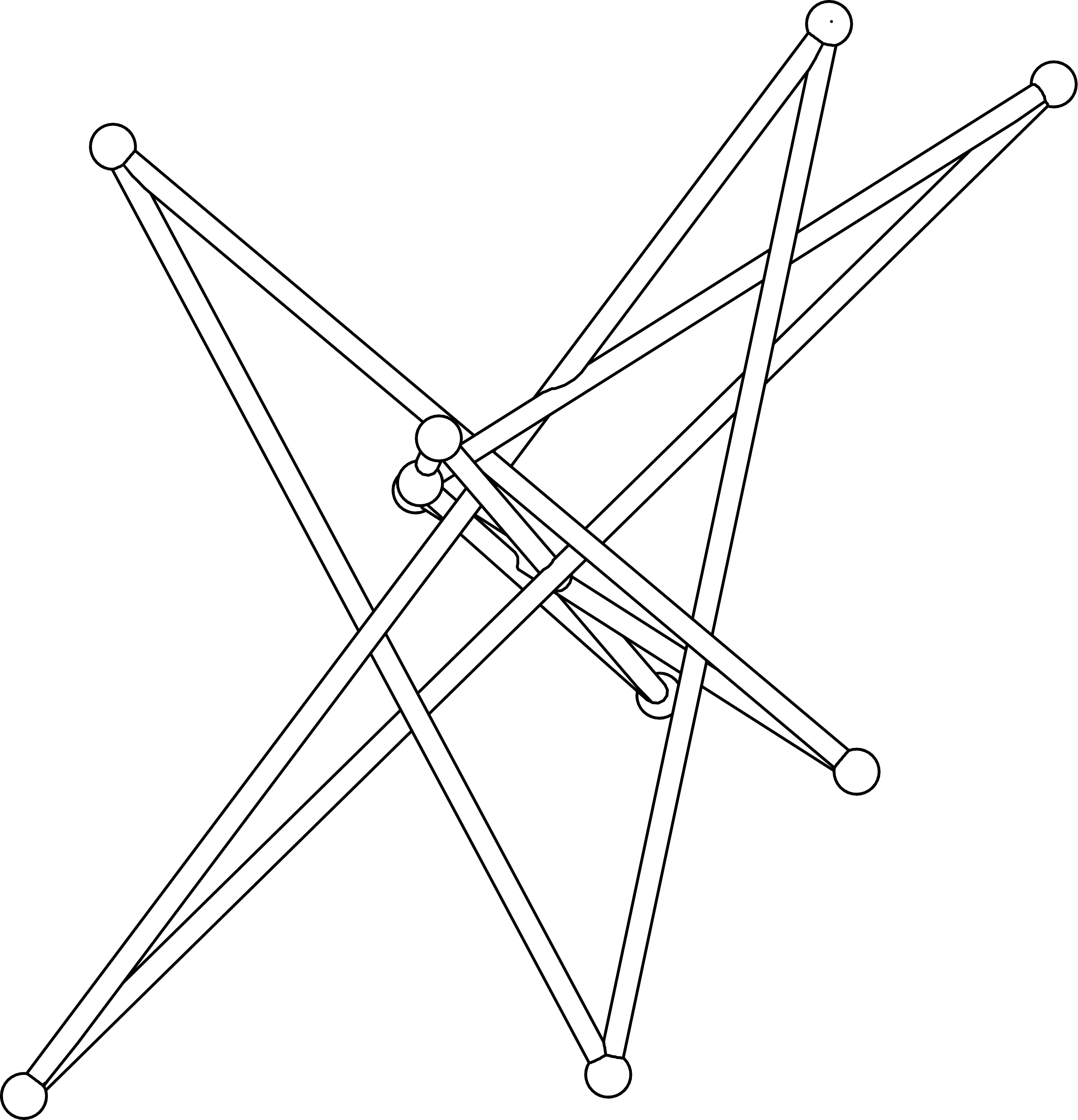}} \qquad \qquad
	\subfloat[$13n_{5018}$]{\includegraphics[height=2in,width=2in,keepaspectratio,valign=b]{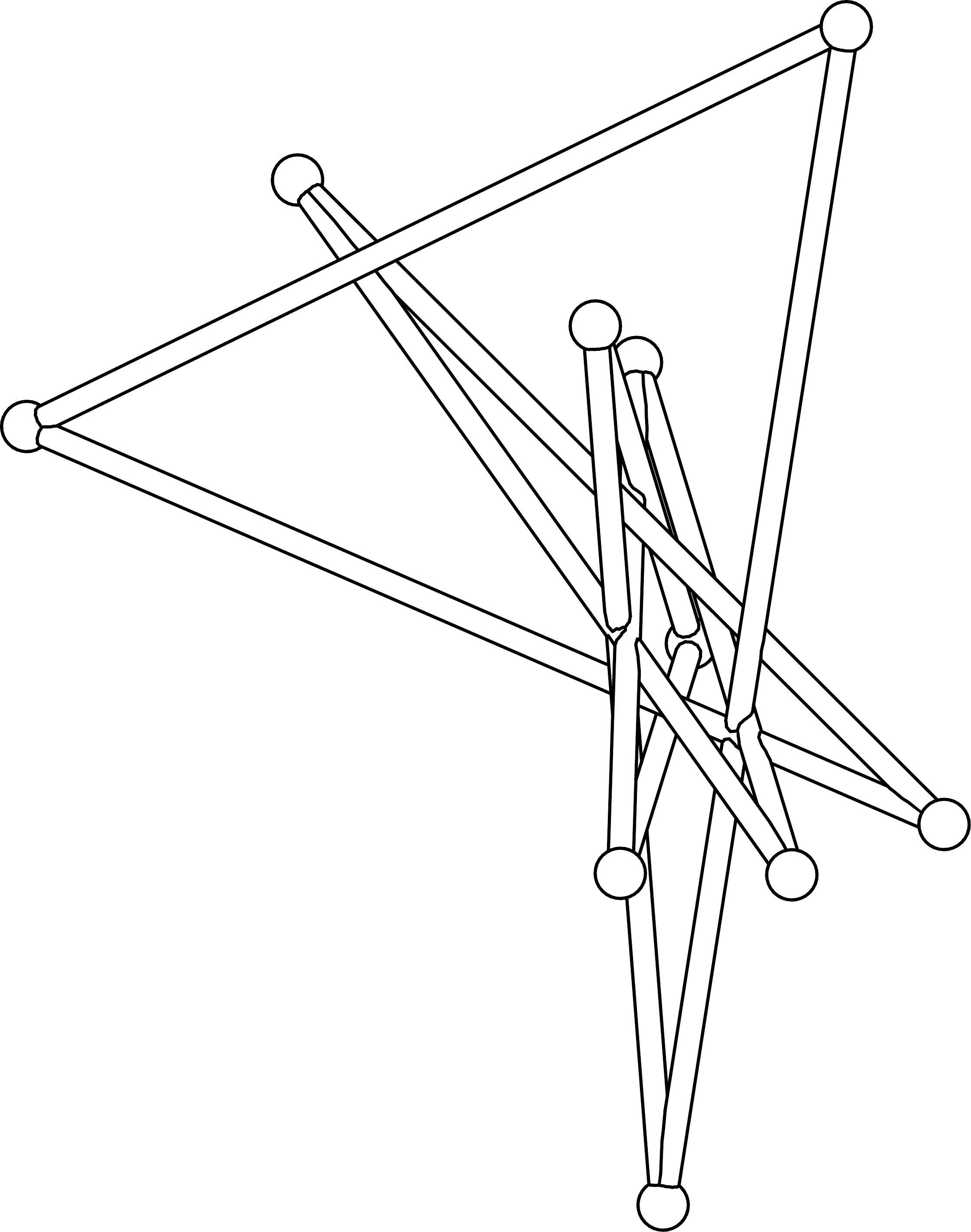}}
	\caption{10-stick realizations of $11n_{78}$ (left) and $13n_{5018}$ (right). Both are shown in orthographic perspective, viewed from the direction of the positive $z$-axis relative to the vertex coordinates given in the dataset~\cite{dataverse-table}.}
	\label{fig:10 stick exact}
\end{figure}

We improve the best known upper bounds for stick numbers of $3$ of the $9$-crossing knots (including $9_{36}$, shown in \autoref{fig:9_36 and 10_37}), $18$ of the $10$-crossing knots (including $10_{37}$, shown in \autoref{fig:9_36 and 10_37}), $62.5\%$ of the $11$-crossing knots, $83.2\%$ of the $12$-crossing knots, and $99.9\%$ of the $13$-crossing knots.
A table of stick number bounds for knots with crossing number $\leq 10$ is given in~\Cref{sec:rolfsen table} and the extended table for all knots through 13 crossings is included in supplementary data and has been added to KnotInfo~\cite{knotinfo}. Vertex coordinates of all of our polygons are available in a dataset distributed through the Harvard Dataverse~\cite{dataverse-table}.

\begin{figure}[t]
	\centering
	\subfloat[$9_{36}$]{\includegraphics[height=2in,width=2in,keepaspectratio,valign=b]{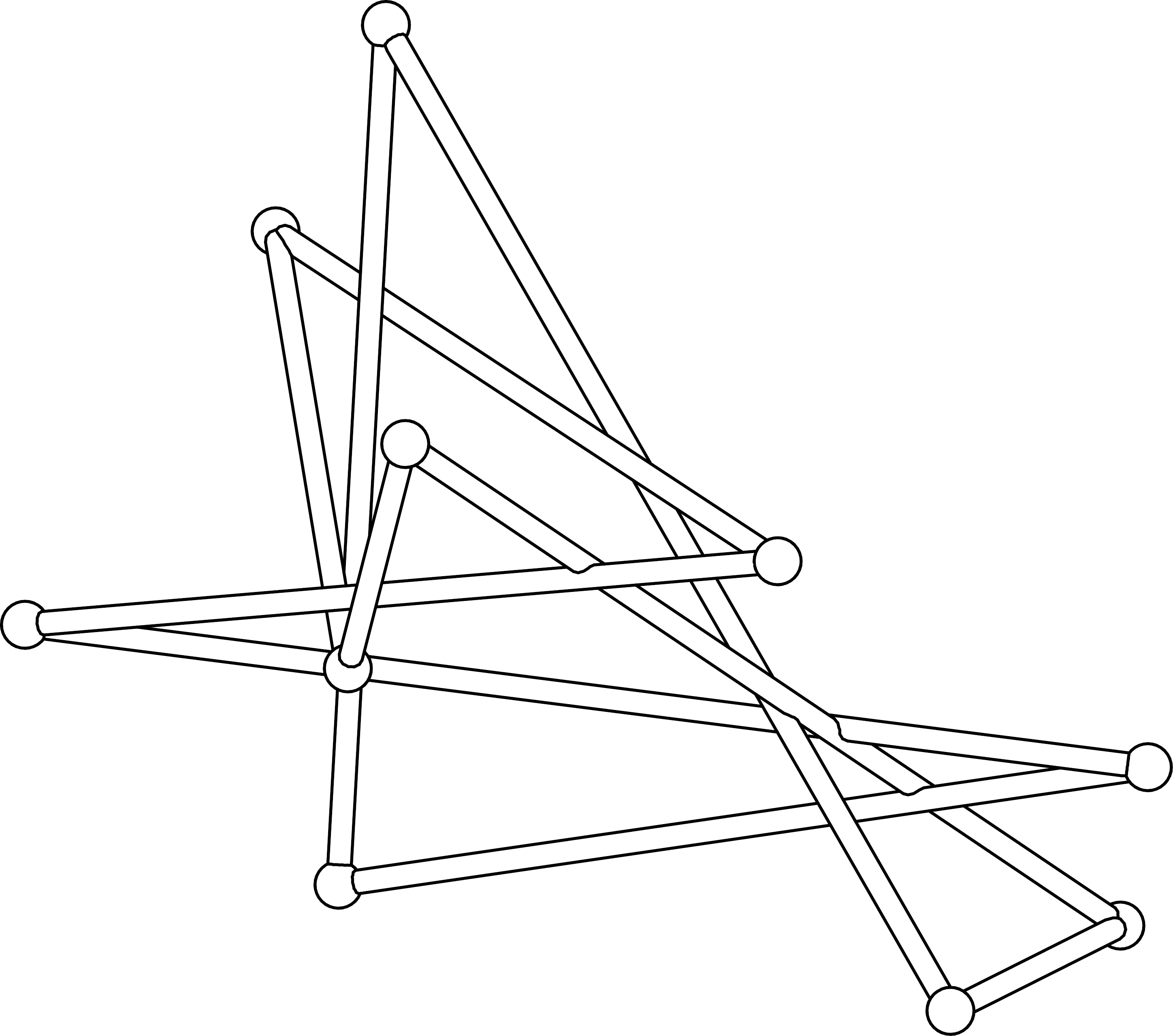}} \qquad \qquad
	\subfloat[$10_{37}$]{\includegraphics[height=2in,width=2in,keepaspectratio,valign=b]{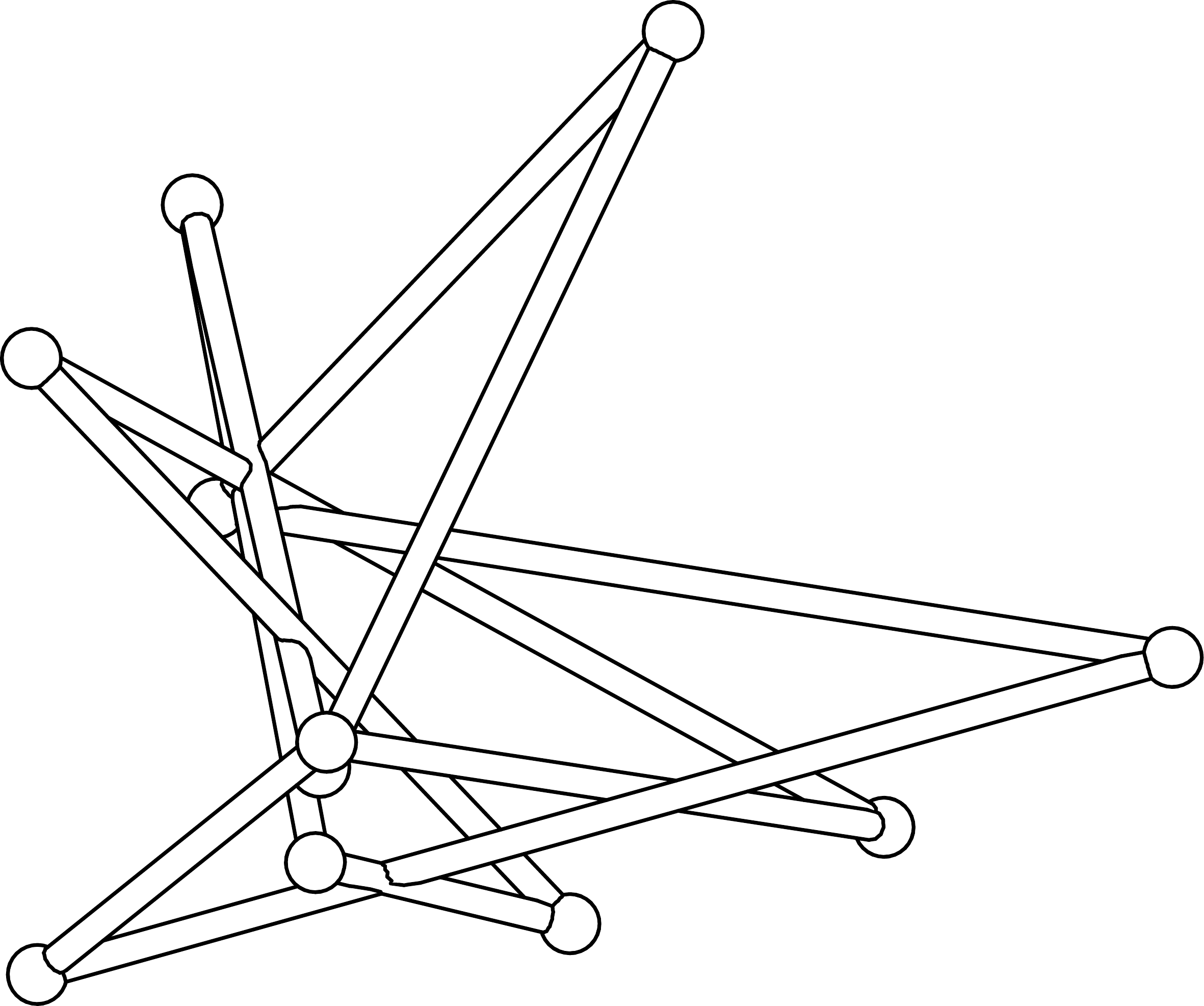}}
	\caption{A 10-stick $9_{36}$ (left) and an 11-stick $10_{37}$. For both knots this improves the previous best upper bound on stick number by 1. Both are shown in orthographic perspective, viewed from the direction of the positive $z$-axis relative to the vertex coordinates given in the dataset~\cite{dataverse-table}.}
	\label{fig:9_36 and 10_37}
\end{figure}

It is interesting to consider the asymptotic relationship between stick number and crossing number. In~\Cref{sec:results}, we combine several previous results to prove:
\begin{theorem}\label{thm:asymptotic picture}
	There are constants $\alpha_*$ and $\beta_*$ so that for any $\alpha < \alpha_*$ and $\beta > \beta_*$ there is some $C$ so that for all knot types with crossing number $c[K] > C$, we have
	\[
		\alpha \sqrt{c[K]} \leq \stick[K] \leq \beta \, c[K].
	\]
	Further, $\alpha_* \in [\!\sqrt{2},2]$ while $\beta_* \in [\nicefrac{2}{3}, \nicefrac{3}{2}]$.
\end{theorem}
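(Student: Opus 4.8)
The four numerical assertions decouple, so the plan is to prove each of $\alpha_* \ge \sqrt{2}$, $\alpha_* \le 2$, $\beta_* \le \nicefrac{3}{2}$, and $\beta_* \ge \nicefrac{2}{3}$ from a separate known ingredient, after first fixing what the thresholds mean. I would define $\alpha_*$ to be the supremum of those $\alpha$ for which $\alpha\sqrt{c[K]} \le \stick[K]$ holds for all $K$ of sufficiently large crossing number, and $\beta_*$ to be the infimum of those $\beta$ for which $\stick[K] \le \beta\, c[K]$ holds for all $K$ of sufficiently large crossing number. With these definitions the existence statement in the theorem is automatic (a sup/inf of a nonempty bounded set), and everything reduces to locating the two thresholds inside the claimed intervals; the quantifier ``for any $\alpha < \alpha_*$'' then follows directly from the definition of the supremum, and dually for $\beta_*$.

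Two of the four bounds come from universal inequalities. For $\alpha_* \ge \sqrt{2}$ I would use the elementary projection count: a polygon with $n = \stick[K]$ edges has at most $\binom{n}{2} - n = \tfrac12 n(n-3)$ crossings in a generic linear projection, since only non-adjacent pairs of straight edges can cross and each such pair crosses at most once. Hence $c[K] \le \tfrac12 n(n-3) < \tfrac12 n^2$, which rearranges to $\stick[K] \ge \sqrt{2\, c[K]}$ for every nontrivial $K$, giving $\alpha_* \ge \sqrt{2}$ with no exceptional set needed. For $\beta_* \le \nicefrac{3}{2}$ I would invoke the Huh--Oh bound $\stick[K] \le \tfrac32(c[K]+1)$: whenever $\beta > \nicefrac32$ one has $\tfrac32(c+1) \le \beta c$ for all large $c$, so the upper inequality holds eventually, forcing $\beta_* \le \nicefrac{3}{2}$.

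The remaining two bounds require explicit families realizing the extreme ratios. For $\alpha_* \le 2$ I would use Jin's computation that $\stick[T(p,q)] = 2q$ whenever $2 \le p \le q \le 2p$, applied to the torus knots $T(p,p+1)$: here $\stick = 2p+2$ while $c[T(p,p+1)] = p^2 - 1$, so $\stick[K]/\sqrt{c[K]} \to 2$. Thus for any $\alpha > 2$ the lower inequality fails along this family, forcing $\alpha_* \le 2$. For $\beta_* \ge \nicefrac{2}{3}$ I would take the connected sums $K_m = \#^m\, T(2,3)$ of $m$ trefoils. Crossing number is additive on alternating knots (via the Tait conjectures / Kauffman--Murasugi--Thistlethwaite), so $c[K_m] = 3m$; bridge number is additive in Schubert's sense, so $\bridge[K_m] = m+1$; and the geometric bound $\stick[K] \ge 2\,\superbridge[K] \ge 2\,\bridge[K]$ --- valid because a generic height function on an $n$-edge polygon has at most $\lfloor n/2 \rfloor$ maxima, so $\superbridge[K] \le \lfloor \stick[K]/2 \rfloor$ --- yields $\stick[K_m] \ge 2(m+1)$. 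Hence $\stick[K_m]/c[K_m] \ge (2m+2)/(3m) \to \nicefrac{2}{3}$, so $\stick \le \beta c$ fails along this family for every $\beta < \nicefrac{2}{3}$, forcing $\beta_* \ge \nicefrac{2}{3}$.

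I expect the genuinely delicate step to be the $\beta_* \ge \nicefrac{2}{3}$ direction, as it is the only one needing a good \emph{lower} bound on stick number for an infinite family; this has to be sourced from the superbridge/bridge chain together with the additivity of both bridge and crossing number, and I would be careful to restrict the summands to alternating knots (trefoils) so that crossing-number additivity is a theorem rather than the open general conjecture. The two-sided gaps $[\sqrt{2},2]$ and $[\nicefrac{2}{3},\nicefrac{3}{2}]$ are not closed by this argument, nor are they expected to be: closing them would require either sharper universal inequalities or families that simultaneously optimize the constant, which is precisely the open problem of the true asymptotic growth of stick number. Once the quantifier bookkeeping in the definitions of $\alpha_*$ and $\beta_*$ is in place, the assembly of the four ingredients is routine.
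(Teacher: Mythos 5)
Your proof is correct, and its overall decomposition is the same as the paper's: four independent bounds, with the $(p,p+1)$ torus knots (stick number $2p+2$, crossing number $p^2-1$; via Jin in your write-up, via Adams et al.\ in the paper) giving $\alpha_* \le 2$, and the Huh--Oh bound giving $\beta_* \le \nicefrac{3}{2}$, exactly as in the paper. The two remaining steps are where you diverge. For $\alpha_* \ge \sqrt{2}$ you reprove Negami's projection count $c[K] \le \tfrac12 \stick[K](\stick[K]-3)$ from first principles, where the paper instead cites Calvo's refinement $\stick[K] \ge \sqrt{2}\sqrt{c[K]+\nicefrac{1}{8}}+\tfrac{7}{2}$; asymptotically these are interchangeable, so your more elementary route costs nothing. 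The genuinely different step is $\beta_* \ge \nicefrac{2}{3}$: the paper uses the prime pretzel knots $P_m = P(2,3,\dots,3)$, importing $\bridge[P_m]=m+1$ from Boileau--Zieschang and Baader et al., whereas you use connected sums of $m$ trefoils, getting bridge index $m+1$ from Schubert's additivity and crossing number $3m$ from Kauffman--Murasugi--Thistlethwaite; both families are then fed into the same Kuiper--Jin chain $\bridge[K] \le \superbridge[K] \le \tfrac12\stick[K]$. Your route leans only on classical theorems and is arguably more self-contained, and you correctly isolate the one delicate point (crossing-number additivity must be sourced from the alternating case, since it is open in general); the paper's choice buys a \emph{prime} family and ties into its computational experiments on those same pretzels, where the $\tfrac23 c[K]$ lower bound is reused and where the authors report being unable to get below $c[P_m]-1$ sticks. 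One cosmetic difference: the paper invokes Kuiper's strict inequality $\bridge[K] < \superbridge[K]$ to get the slightly stronger $\stick[P_m] \ge \tfrac23 c[P_m] + \tfrac83$, while you use only the trivial weak inequality $\bridge[K] \le \superbridge[K]$, which is all the asymptotic statement requires.
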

We were able to show $\stick[K] \leq c[K]$ for all the knots we examined with at least twelve crossings. This leads us to conjecture that $\beta_* \leq 1$; see~\Cref{sec:discussion} and~\autoref{conj:stick bound}. It might be the case that $\beta_* = 1$ is the lowest value for which the theorem is true. We ran our algorithm on $(2,3, \dotsc, 3)$ pretzel knots with up to $98$ crossings and were unable to find configurations with $\stick[K] < c[K]-1$.\footnote{We note that in the course of proving~\autoref{thm:asymptotic picture}, we established that these pretzel knots have $\stick[K] \geq \frac{2}{3} c[K]$.}

\section{Methods}
\label{sec:methods}

There are two basic approaches to proving that $\stick[K] \leq n$; one can search the space of $n$-gons for knots of type $K$~(cf. \cite{millettKnottingRegularPolygons1994,calvoMinimalEdgePiecewise1998,millettMonteCarloExplorations2000,eddyImprovedStickNumber2019,eddyNewStickNumber2022,blairKnotsExactly102020,shonkwilerNewComputationsSuperbridge2020,shonkwilerAllPrimeKnots2022,shonkwilerNewSuperbridgeIndex2022a}) or one can search the space of polygons of fixed knot type $K$ for $n$-gons~(cf. \cite{schareinInteractiveTopologicalDrawing1998,rawdonUpperBoundsEquilateral2002}). In this experiment, we take the second approach; although this makes the search space much larger (in principle, infinite), we transform the search problem into an optimization problem on the number of edges. We find that this reduces the computation time by several orders of magnitude, while providing results that are always equal to or better than those found by the first approach.


\begin{figure}
	\centering
	\subfloat[Lattice $9_{29}$]{\includegraphics[height=2in,width=2in,keepaspectratio,valign=b]{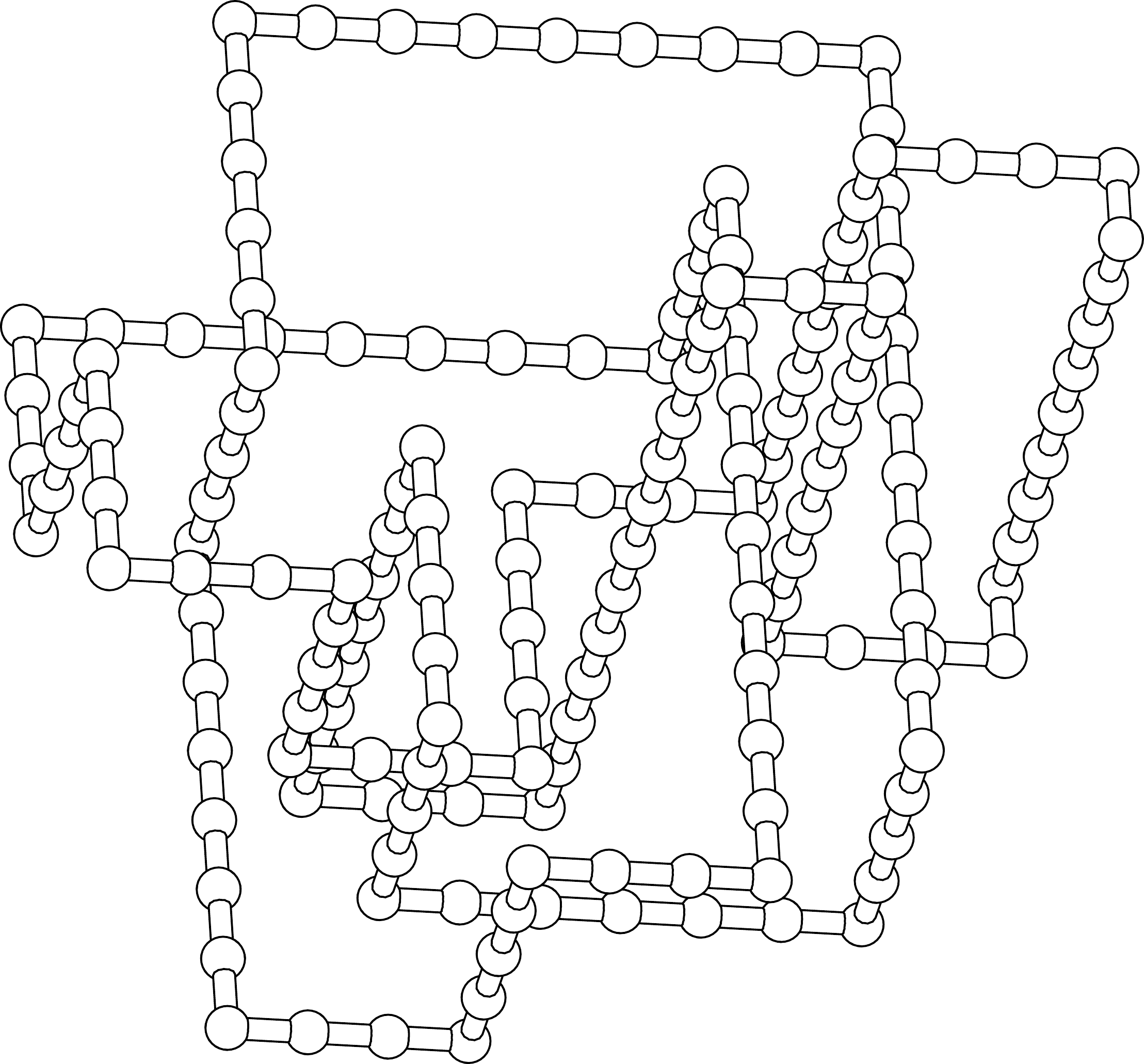}} \hfil
	\subfloat[Near-minimal lattice $9_{29}$]{\includegraphics[height=2in,width=2in,keepaspectratio,valign=b]{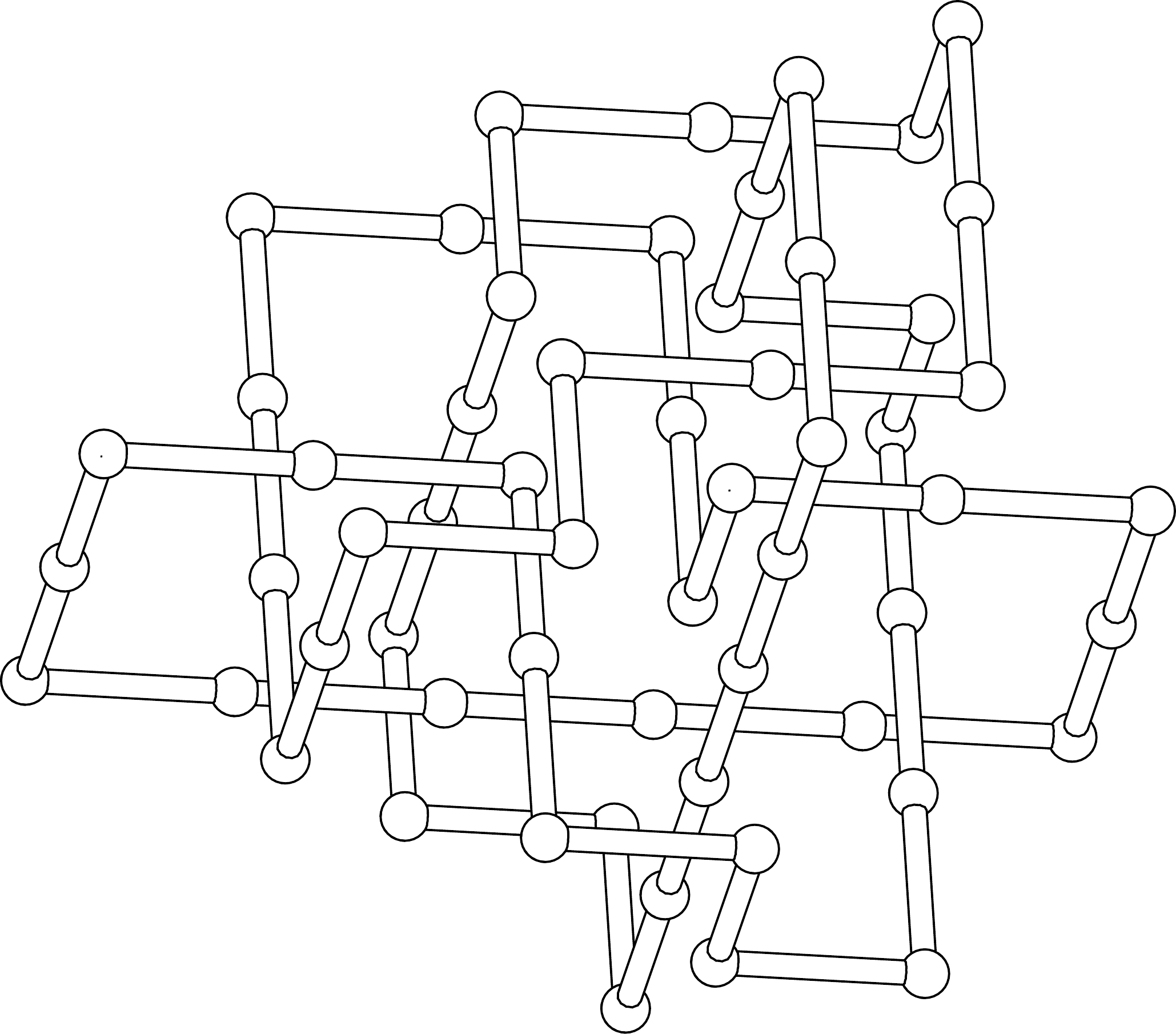}}

	\subfloat[Equilateral $9_{29}$]{\includegraphics[height=1.8in,width=1.8in,keepaspectratio,valign=b]{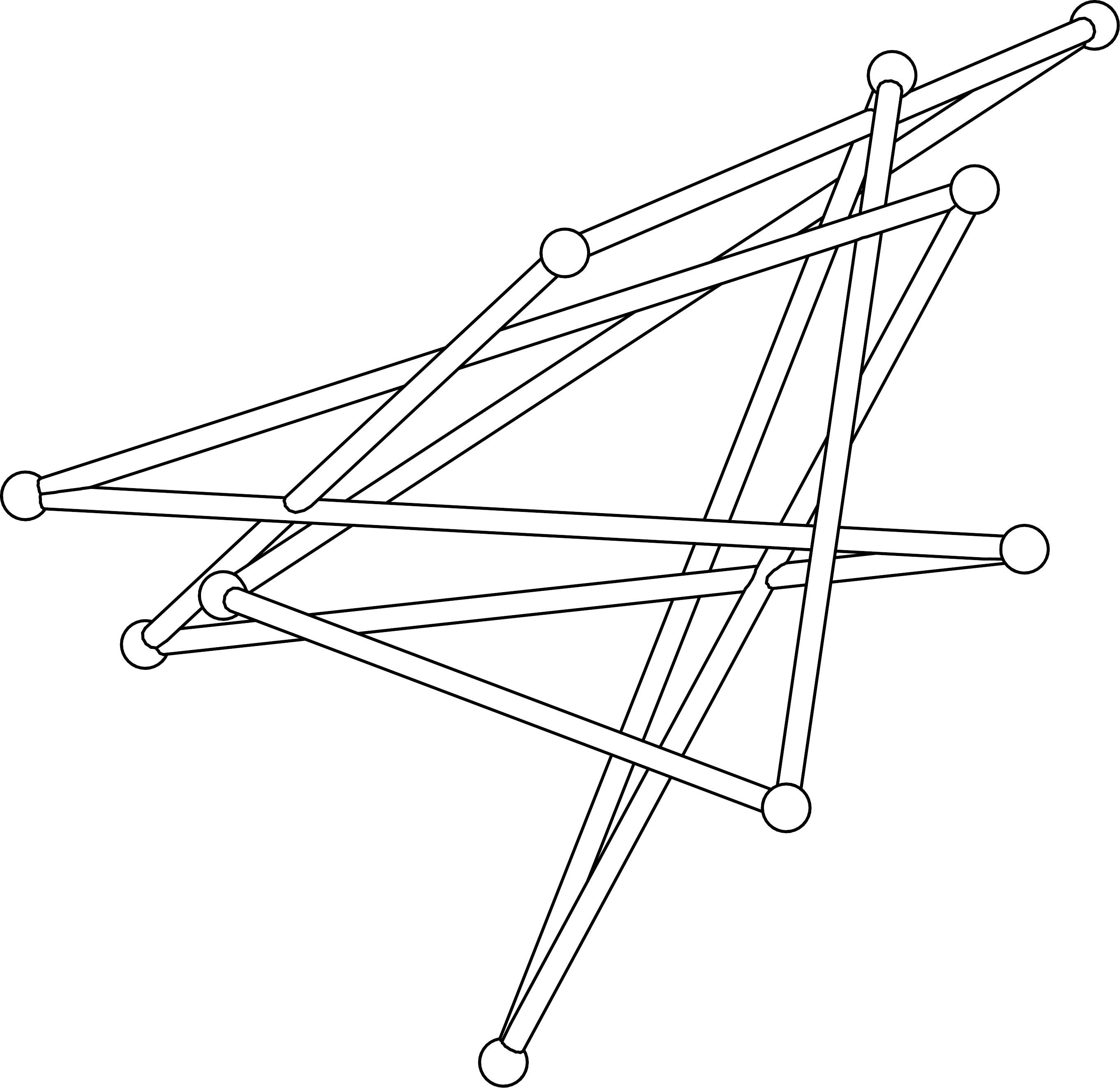}} \hfil
	\subfloat[Minimal $9_{29}$]{\includegraphics[height=1.8in,width=1.8in,keepaspectratio,valign=b]{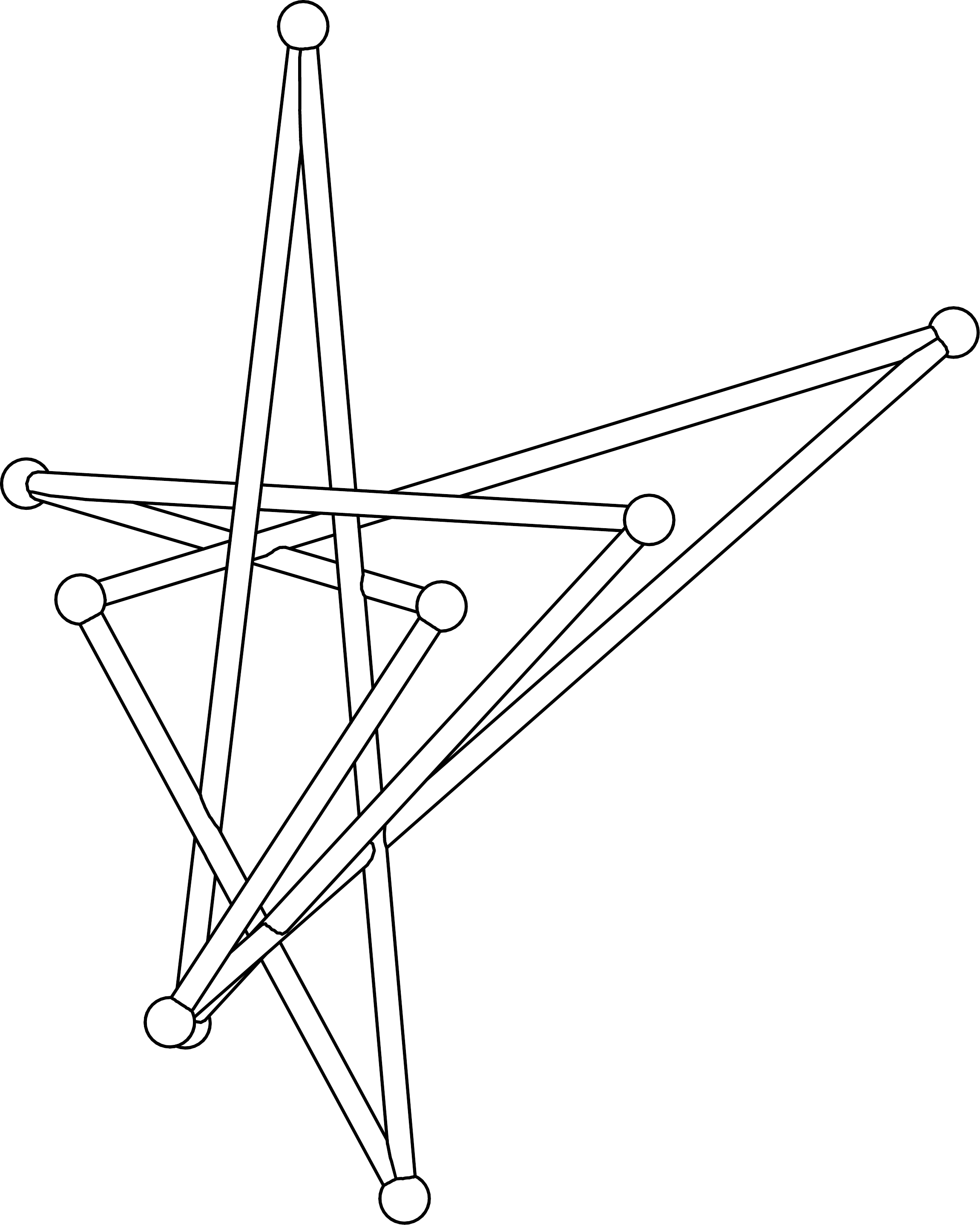}}
	\caption{An illustration of our algorithm, as applied to the knot $9_{29}$. Starting from a non-minimal lattice configuration with 168 edges (top left), simulated annealing with BFACF moves produces a nearly-minimal lattice configuration with 58 edges (top right).
	The off-lattice simulated annealer produces a 10-stick equilateral configuration (bottom left), and then the non-equilateral moves eventually reduce the number of sticks to $9$ (bottom right). This happens to be the minimum ~\cite{schareinInteractiveTopologicalDrawing1998,calvoGeometricKnotTheory1998}.
	In general, none of these configurations are guaranteed to be minimal: indeed, a 56-edge lattice $9_{29}$ is known~\cite{andrew-minimal-knots} and, as we will see in \autoref{thm:eq 9_29}, it is possible to find an equilateral embedding of $9_{29}$ with just $9$ sticks.}
	\label{fig:overview}
\end{figure}

To find a low-stick number representative of a knot type, we start with a lattice embedding of the knot. We then reduce the number of lattice edges by a version of simulated annealing using moves---known as BFACF moves~\cite{bergRandomPathsRandom1981, aragaodecarvalhoNewMonteCarloApproach1983, aragaodecarvalhoPolymers$g|varphi|^4$Theory1983}---which both preserve knot type~\cite{jansevanrensburgBFACFAlgorithmKnotted1991} and are fast to compute.
We will explain these moves in more detail below.
Once we have arrived at a nearly minimal lattice configuration, 
we switch to an off-lattice simulated annealer which uses topology-preserving fold moves and a new move which increases or decreases the number of edges (described below).
The final result is the configuration with fewest edges found by this process. 
We conclude by double-checking the knot type of the final polygon. 
The whole process is illustrated for a specific example in~\autoref{fig:overview}.



We now describe our algorithm and the experiments we performed in more detail (readers satisfied with the discussion above may safely skip to the next section). 

Our algorithm starts with a lattice embedding of the desired knot but does not specify how that embedding is to be found. For the experiments described in this paper, we used the following methods to construct initial embeddings. For knots through 10 crossings, the second author had previously found lattice minimal configurations~\cite{andrew-minimal-knots} (building on~\cite{schareinBoundsMinimumStep2009}), so we started there. For knots with 11–13 crossings, we started with the off-lattice polygonal realizations from KnotInfo~\cite{knotinfo} and approximated them by lattice polygons using the ``\texttt{embed clk}'' command in~\texttt{KnotPlot}~\cite{KnotPlot}.
For all other knots, we started with a planar diagram code. We then used the ``\texttt{REAPR}'' embedding function from \knoodle~\cite{knoodle}. In this method, which we describe in more detail elsewhere~\cite{REAPR}, the $x$- and $y$-coordinates are determined by an orthogonal graph layout of the knot diagram and the $z$-coordinates are determined by an integer-valued height function that minimizes total variation subject to the constraints imposed by the over/under relations at the crossings.


To minimize the length of our lattice embeddings we use a version of simulated annealing~(cf. \cite{ubertiMinimalLinksCubic1998}).
At each iteration, we choose an edge in the embedding and a lattice face adjacent to that edge. 
Then we see if replacing the edges incident to the face with their complement (see Figure~\ref{fig: bfacf1}) causes the loop to self-intersect. 
(We can do this in constant time by storing occupied sites in a~\texttt{Rust} hash table.) 
If there is an intersection, the move is immediately rejected. 
Otherwise, the proposed move preserves knot type. 
If it also preserves or decreases the number of lattice edges, it is immediately accepted. 
Otherwise, if it increases the number of edges, it is accepted only with a small fixed probability. 
This method finds very short simple cubic lattice configurations in a few seconds of computer time.\footnote{As an aside, we note that from these minimal (or near-minimal) simple-cubic lattice embeddings, one can compute minimal embeddings on the face-centred and body-centred cubic lattices. 
One can map the simple-cubic lattice to a sublattice of the FCC and BCC lattices, and then minimize using a similar set of BFACF moves \cite{jansevanrensburgBFACFstyleAlgorithmsPolygons2011}.} 


\begin{figure}[t]
	\begin{center}
		\includegraphics[width=0.66\textwidth]{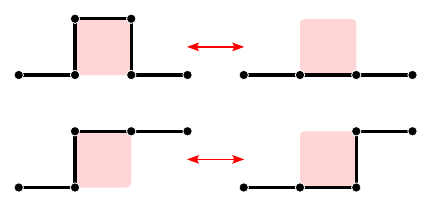}
	\end{center}
	\caption{The deformation moves of the BFACF algorithm on the cubic lattice. Each move acts on an edge and an adjacent face; the edges around that face are replaced by their complement.}
	\label{fig: bfacf1}
\end{figure}


\begin{figure}[t]
	\begin{center}
		\includegraphics[width=0.66\textwidth]{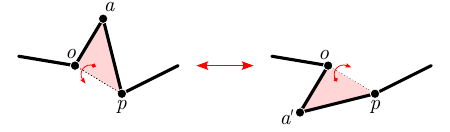}
	\end{center}
	\caption{The fold move for off-lattice polygons. Pick three consecutive vertices giving a triangle of two edges in the polygon and a base axis edge that is not. Then rotate the apex of
		this triangle about the base axis by some angle. 
	}
	\label{fig: bfacf2}
\end{figure}

To go off-lattice, we define new moves. The first move (\Cref{fig: bfacf2}) takes a triple of consecutive vertices $o$, $a$, $p$ and rotates $a$ around the line $op$ by a randomly chosen angle to a new position $a'$. 
We imagine that $a$ moved to $a'$ along a line, sweeping out triangles $\triangle oaa'$ and $\triangle paa'$. 
If these triangles do not intersect the remainder of the polygon, this was an ambient isotopy and the knot type is preserved.\footnote{These segment/triangle intersection checks are a standard predicate in computational geometry. They can be done exactly (see~\cite{CGAL}), but we simply choose a conservative floating-point strategy. We might reject some proposed moves where the polygon narrowly avoided a self-intersection this way, but this can only slow things down. We are much more concerned about mistakenly accepting a move which did contain a self-intersection, as this might invalidate the entire computation by changing the knot type of the polygon. We recheck this crucial point at the end of the computation by verifying that the knot type did not change.} In this case, we say the move ``passes triangle checks''.

The second move (see top row in \Cref{fig: bfacf3}) is defined on a set of four consecutive vertices \(o,a,b,p\). 
If $|o-p|>2$, the move is immediately rejected. 
Otherwise, we may choose a plane containing $op$ uniformly at random and construct $\triangle ocp$ in that plane so that $|o-c|=|c-p|=1$. As before, we imagine that $a$ and $b$ move to $c$ along lines, sweeping out triangles \(\triangle oac, \triangle abc, \triangle bcp\). 
This is an ambient isotopy if the rest of the polygon does not intersect these triangles.

The third move (see bottom row in \Cref{fig: bfacf3}) is defined on three consecutive vertices \(o,c,p\). We construct an isosceles trapezoid \(oabp\) in the plane of \(o,c,p\) with $a$, $b$ on the same side of $op$ as $c$ with \(|o-a|=|a-b|=|b-p|=1\). Now we imagine that $c$ splits in two and moves to $a$ and $b$ along lines, sweeping out \(\triangle oac, \triangle abc, \triangle bcp\). 
As before, this is an ambient isotopy if these triangles are disjoint from the rest of the polygon.

We choose one of these moves at random (their relative probabilities seemed to make little difference in the results). 
It is immediately rejected if it fails triangle checks.\footnote{A move which passes triangle checks is guaranteed to preserve knot type. However, triangle checks are conservative: some proposed moves which fail might preserve knot type anyway. We don't check for this.} 
Otherwise, it is immediately accepted if the number of edges stays the same or goes down. 
If the move increases the number of edges, then we accept it only with a small fixed probability.
This procedure is quite effective at reducing the number of edges from the number in the on-lattice configuration, and we generally ran it for only a few minutes. 

\begin{figure}[t]
	\begin{center}
		\includegraphics[width=0.66\textwidth]{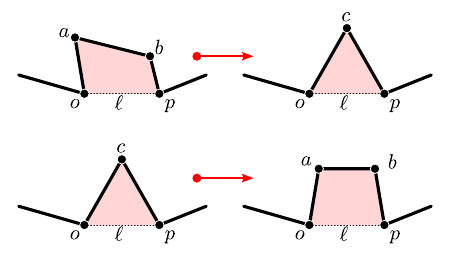}
	\end{center}
	\caption{Top row: The length-decreasing move takes four vertices \(o,a,b,p\) as input; 
	provided the distance \(\ell = |o-p|\) is at most 2, it maps them to three vertices \(o,c,p\). The vertex $c$ is constructed by choosing a unit vector $r$ uniformly from the circle of unit vectors orthogonal to $oc$, and then choosing $co$ to be the unique vector so that $\langle c-o,r \rangle > 0$ and $|o-c| = |c-p| = 1$.   
	Bottom row: The length-increasing move takes three vertices \(o,c,p\) and maps them to four vertices \(o,a,b,p\) so that \(|o-a|=|a-b|=|b-p|=1\) lying in the same plane. }
	\label{fig: bfacf3}
\end{figure}

%

So far, all of our moves have preserved the edge lengths of the polygon. In the last stages, we keep the rotation moves (and their triangle checks), but replace the moves which change the number of edges with new ones. The first new move is the ``triangle-collapse'' move, defined on a triple of adjacent vertices \(o,c,p\). It is rejected immediately if \( |o-p|^2 \leq 1/8 \). If not, the proposed move just deletes $c$, replacing $oc$ and $cp$ with $op$. We check $\triangle opc$ for intersections with the rest of the polygon.

The second new move is a ``triangle-inflation'' move, defined on a pair of adjacent vertices \(o,p\). We choose a unit vector $r$ orthogonal to $op$ at random, choose $(t,u)$ uniformly in $[-1/2,3/2] \times [0,1]$ and let \(c = o + t \,(p-o) + u \, r \). We then check $\triangle opc$ for intersections with the rest of the polygon. 

As before, a proposed move is rejected immediately if it does not pass triangle checks. Otherwise, it is accepted if it preserves or reduces the number of edges and accepted with a small fixed probability if it increases the number of edges. This typically reduced stick number by zero, one, or two edges early in the run (within a few seconds) and then seemed to reach minimum stick number. Since we could not be absolutely sure that we had reached minimum stick number unless we matched an existing lower bound (which was quite rare), we usually just continued the run as long as our computational resources allowed. On average, this was about $10$ minutes per knot, but we continued some runs as long as several hours.

We validated the resulting configurations by checking their knot types in two ways. For all knots with $\leq 13$ crossings, we used \pyknotid~\cite{pyknotid} to convert vertex coordinates into a PD code, then used \snappy~\cite{snappy} to identify the knot complement from the manifold census. In all but two cases \snappy\ identified the complement as being the complement of the correct knot;\footnote{Note: we are using the KnotInfo~\cite{knotinfo} naming conventions, so \snappy\ identifies our $10_{83}$ as $10_{86}$ and {\em vice versa}, but this is the correct, and expected, behavior.} the two exceptions were the 13-crossing satellite knots $13n_{4587}$ and $13n_{4639}$, which are not hyperbolic. We also used \knoodle~\cite{knoodle} to create and simplify a planar diagram for each polygon. \knoodle\ contains a database of all possible simplified digrams with $\leq 13$ crossings. Each of these diagrams has been connected by isotopy to one of the planar diagrams given by KnotInfo's PD code for the corresponding knot type (or to a representative PD code which has been mirrored or reversed, depending on the symmetries of the knot type). In this way, \knoodle\ can unambiguously identify knots (including symmetry type) up to 13 crossings.
For all $\num{12965}$ knot types in our experiment, \knoodle\ confirmed that our stick knots were isotopic to~KnotInfo's PD codes.

For knots with more than 13 crossings, we had to use invariants to provide a (weaker) check on knot types. For the $(q,2)$- and $(q,3)$-torus knots and all the twist knots, we verified that the Jones polynomial of the final polygon was the same as the (known) Jones polynomial of that torus or twist knot. For the pretzel and large knots we verified that the Jones polynomial of the polygon was the same as the Jones polynomial of the input diagram. For the $(q,p)$-torus knots with $4 \leq p < q \leq 32$, we used  \knoodle\ to evaluate the Alexander polynomial at $-1$ as well as at 100 random points on the unit circle and compared this to evaluation of the Alexander polynomial of the torus knot at those points. The magnitudes of the actual values were all in the interval $[0.000013, 274.2]$ and the relative errors were all less than $1.53 \times 10^{-9}$.

\section{Results}
\label{sec:results}

Attempting to minimize the number of edges over all polygons with a given knot type $K$ must yield an upper bound for $\stick[K]$;~\cite{schareinInteractiveTopologicalDrawing1998,rawdonUpperBoundsEquilateral2002} have given stick number bounds for all knot types with 10 and fewer crossings in this way. Sampling $n$-gons only provides an upper bound for the knot types observed in the sample; this provided stick number bounds for many 11 crossing knot types and some knot types with 12 and 13 crossings~\cite{eddyNewStickNumber2022, stick-knot-gen}. As we ran examples for every knot type up through 13 crossing, we are able to fill in the missing data in this table. In every case, we were able to find configurations which matched the existing bounds, and for $\num{12160}$ of $\num{12965}$ knot types, we were able to improve them.

The new stick number bounds for knot types up to 10 crossings appear in~\Cref{tab:new stick bounds}, while the full table of new stick number bounds is distributed with this paper as supplementary data. The polygons can be accessed as a dataset on the Harvard Dataverse~\cite{dataverse-table}. Frequencies of different stick numbers are reported in~\Cref{tab:frequencies}. Many of the configurations we found appear (almost) singular to the eye (see, e.g., \Cref{fig:10 stick exact,fig:9_36 and 10_37,fig:9_29}), but they are actually several orders of magnitude away from being numerically singular: the minimum distance between non-adjacent edges in any of the knots we found was $4.18148 \times  10^{-7}$ (between the 10th and 12th edges of our $13n_{2931}$).

\begin{table}
	\begin{center}
		\begin{tabular}{l@{\hskip 0.25in}llllllll}
			\multicolumn{9}{c}{Stick number frequencies}                       \\
			\toprule
			\diagbox{$c$}{$\stick$} & 6 & 7 & 8 & 9  & 10  & 11   & 12   & 13  \\
			\midrule
			3                       & 1 &   &   &    &     &      &      &     \\
			4                       &   & 1 &   &    &     &      &      &     \\
			5                       &   &   & 2 &    &     &      &      &     \\
			6                       &   &   & 3 &    &     &      &      &     \\
			7                       &   &   &   & 7  &     &      &      &     \\
			8                       &   &   & 2 & 4  & 15  &      &      &     \\
			9                       &   &   &   & 14 & 35  &      &      &     \\
			10                      &   &   &   &    & 94  & 71   &      &     \\
			11                      &   &   &   &    & 160 & 362  & 30   &     \\
			12                      &   &   &   &    & 122 & 1156 & 898  &     \\
			13                      &   &   &   &    & 108 & 2397 & 6509 & 974 \\
			\bottomrule
		\end{tabular}
	\end{center}
	\caption{Frequencies of different stick numbers, by crossing number.}
	\label{tab:frequencies}
\end{table}

Our new upper bounds allow us to compute exact stick numbers for 19 knot types:
\begin{sticknumber}
	The following knots have stick number equal to 10:
	\begin{itemize}
		\item $11n_i$ with $i \in \{71,75,76,78\}$;
		\item $13n_j$ with $j \in \{225, 230, 285, 288, 307, 584, 586, 593\}$, \\
		      and $j \in \{602, 603, 604, 607, 608, 1192, 5018\}$.
	\end{itemize}
\end{sticknumber}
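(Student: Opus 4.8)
The plan is to prove the equality $\stick[K] = 10$ for each of the $19$ knot types $K$ by sandwiching two bounds. Since $\stick[K]$ is by definition the minimum edge number over all polygons of type $K$, it suffices to (i) exhibit one valid $10$-edge polygon of type $K$, giving $\stick[K] \le 10$, and (ii) invoke an existing lower bound $\stick[K] \ge 10$ to force equality. No new lower-bound theorem is required: all the genuine work sits in producing and certifying the matching $10$-stick configurations, after which the two inequalities pin the value exactly.

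For the upper bound I would take the polygons produced by the simulated-annealing pipeline of \Cref{sec:methods}, whose vertex coordinates are recorded in the dataset~\cite{dataverse-table}, and for each one certify that it is embedded (no two non-adjacent edges meet) and has exactly $10$ edges. The nontrivial part is certifying the knot \emph{type}. Here I would run the two independent identifications described in the Methods: first convert the vertex list to a PD code with \pyknotid\ and identify the complement in the census with \snappy; second, build and simplify a planar diagram with \knoodle\ and match it against KnotInfo's PD code, taking care of mirror/reverse conventions. Agreement of both pipelines with the claimed type $K$ establishes $\stick[K] \le 10$.

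For the lower bound I would appeal to the existing determination of small stick numbers recorded in the literature and in KnotInfo~\cite{knotinfo,blairKnotsExactly102020,eddyNewStickNumber2022}: each of these nonalternating $11$- and $13$-crossing knots is known not to be realizable with $9$ or fewer sticks, so $\stick[K] \ge 10$. It is worth emphasizing why a purely combinatorial bound will not do: Negami's crossing-number bound $\tfrac{1}{2}\bigl(7+\sqrt{8\,c[K]+1}\bigr) \le \stick[K]$ evaluates to roughly $8.2$ at $c=11$ and $8.6$ at $c=13$, yielding only $\stick[K] \ge 9$. Thus the stronger, enumeration-based lower bound of $10$ is essential, and matching it is precisely what was previously missing for these knots.

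The hard part will be the rigor of the knot-type verification, not the arithmetic of combining the bounds. Because the whole claim collapses if even a single polygon is misidentified, I would treat the verification as the crux: insist on agreement between the two independent identification pipelines, use conservative (or exact) geometric predicates for the embeddedness and triangle-intersection checks so that no self-intersection is ever silently accepted, and watch for the degenerate cases where the census-based \snappy\ check is unreliable (non-hyperbolic types), relying on \knoodle's diagrammatic identification there. Once every one of the $19$ polygons is confirmed to have its claimed type and exactly $10$ edges, the existing lower bound of $10$ closes the gap immediately.
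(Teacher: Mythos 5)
Your upper-bound half coincides with the paper's: exhibit verified $10$-stick realizations of the $19$ knot types, giving $\stick[K] \leq 10$ for each. The genuine gap is your lower bound. You assert that each of these knots ``is known not to be realizable with $9$ or fewer sticks,'' appealing to an ``existing determination of small stick numbers'' in the literature and KnotInfo~\cite{knotinfo,blairKnotsExactly102020,eddyNewStickNumber2022}. No such enumeration-based determination exists: rigorous classifications of polygonal knots by edge number stop at $8$ sticks (Calvo~\cite{calvoGeometricKnotTheory1998,calvoCharacterizingPolygonsBbb2002}), and there is no classification of $9$-stick knot types---indeed, if there were, the stick numbers of knots such as $8_1$ (still only known to lie in $\{9,10\}$) would be settled. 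The sampling of $9$-gons in~\cite{eddyNewStickNumber2022} can only produce \emph{upper} bounds for knot types that happen to appear in the sample; it can never certify that a given type occurs in no $9$-gon. And~\cite{blairKnotsExactly102020} concerns different knots and derives its lower bounds not from enumeration but from superbridge index. So the inequality $\stick[K] \geq 10$---which you yourself correctly identify as indispensable, since the Negami--Calvo crossing-number bound yields only $\stick[K] \geq 9$ at $c[K] = 11$ or $13$---is left unsupported exactly where the mathematical content of the theorem lies.

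The paper supplies this missing ingredient via bridge number theory. Each of the $19$ knots has bridge index $4$: for the $11n$ knots this is a theorem of Musick~\cite{musickMinimalBridgeProjections2012}, and for the $13n$ knots it is due to Blair, Kjuchukova, and Morrison~\cite{blairCoxeterQuotientsKnot2025,Wirt_Hm}. Combining Kuiper's inequality $\bridge[K] < \superbridge[K]$~\cite{kuiperNewKnotInvariant1987} with Jin's inequality $\superbridge[K] \leq \frac{1}{2}\stick[K]$~\cite{jinPolygonIndicesSuperbridge1997} gives
\[
\stick[K] \;\geq\; 2\,\superbridge[K] \;\geq\; 2\left(\bridge[K]+1\right) \;=\; 10 .
\]
To repair your argument you must either reproduce this bridge/superbridge chain or provide some other rigorous provenance for the value-$10$ lower bound; citing a database entry is not sufficient, particularly since the entry in question is itself derived from precisely the argument above.
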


\begin{proof}[Proof of \Cref{thm:exact-sticks}]
	We found 10-stick realizations of all of the knots in the statement of the theorem, which implies $\stick[K] \leq 10$ for each of these knots.

	On the other hand, each of the knots in the statement of the theorem is a 4-bridge knot: for the 11-crossing knots this was proved by Musick~\cite{musickMinimalBridgeProjections2012}, and for the 13-crossing knots by Blair, Kjuchukova, and Morrison~\cite{blairCoxeterQuotientsKnot2025,Wirt_Hm}. Work of Kuiper~\cite{kuiperNewKnotInvariant1987} and Jin~\cite{jinPolygonIndicesSuperbridge1997} implies that
	\begin{equation}\label{eq:bridge-superbridge-stick inequality}
		\bridge[K] < \superbridge[K] \leq \frac{1}{2} \stick[K],
	\end{equation}
	where $\bridge[K]$ is the bridge index of a knot $K$ and $\superbridge[K]$ is its superbridge index. Hence, every 4-bridge knot has $\stick[K] \geq 10$. 

	Since we've proved that $\stick[K]$ is bounded above and below by 10, we conclude that it must be exactly 10 for each of these knots. 
\end{proof}

We are now in a position to prove our theorem on the asymptotic relationship between stick number and crossing number, which we restate for convenience. 
\begin{asymptotics}
	There are constants $\alpha_*$ and $\beta_*$ so that for any $\alpha < \alpha_*$ and $\beta > \beta_*$ there is some $C$ so that for all knot types with $c[K] > C$, we have
	\[
		\alpha \sqrt{c[K]} \leq \stick[K] \leq \beta \, c[K].
	\]
	Further, $\alpha_* \in [\!\sqrt{2},2]$ while $\beta_* \in [\nicefrac{2}{3}, \nicefrac{3}{2}]$.
\end{asymptotics}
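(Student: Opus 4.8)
The plan is to realize the theorem as the conjunction of four estimates, organized through the two constants
\[
\alpha_* = \sup\{\alpha : \alpha\sqrt{c[K]} \le \stick[K] \text{ for all } K \text{ with } c[K] \text{ large}\},
\]
and the analogous $\beta_* = \inf\{\beta : \stick[K] \le \beta\,c[K] \text{ for all } K \text{ with } c[K] \text{ large}\}$. The admissible-$\alpha$ set is downward closed and the admissible-$\beta$ set upward closed, so once I exhibit one admissible $\alpha$ and one admissible $\beta$ (nonemptiness) together with an upper obstruction for $\alpha$ and a lower obstruction for $\beta$ (boundedness), both constants exist; the quantified form of the theorem is then automatic, taking $C$ to be the larger of the two thresholds supplied for a given admissible pair $\alpha < \alpha_*$, $\beta > \beta_*$. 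What remains is to place $\alpha_*$ and $\beta_*$ in the stated intervals, one bound at a time.

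First I would establish the two estimates that hold for \emph{all} knots of large crossing number. For the lower bound, project a minimal stick representative generically: only the $\binom{\stick[K]}{2} - \stick[K] = \tfrac12\stick[K](\stick[K]-3)$ pairs of non-adjacent edges can meet, each at most once, so $c[K] \le \tfrac12\stick[K](\stick[K]-3)$ (the elementary count behind Negami's bound~\cite{negamiRamseyTheoremsKnots1991}). Solving for $\stick[K]$ gives $\stick[K] \ge \tfrac12(3 + \sqrt{9 + 8\,c[K]}) \ge \sqrt{2\,c[K]}$, so every $\alpha \le \sqrt2$ is admissible and $\alpha_* \ge \sqrt2$. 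For the upper bound I would invoke the linear realization bound of Huh and Oh, $\stick[K] \le \tfrac32 c[K] + O(1)$: for any $\beta > \tfrac32$ this is at most $\beta\,c[K]$ once $c[K]$ is large, so $\beta_* \le \tfrac32$.

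Next I would produce the two extremal families. To cap $\alpha_*$, take the torus knots $T_{p,p+1}$: Jin's computation of the polygon index~\cite{jinPolygonIndicesSuperbridge1997} gives $\stick[T_{p,q}] = 2q$ for $2 \le p < q \le 2p$, hence $\stick[T_{p,p+1}] = 2(p+1)$ while $c[T_{p,p+1}] = p^2 - 1$, so the ratio $\stick[T_{p,p+1}]/\sqrt{c[T_{p,p+1}]} = 2\sqrt{(p+1)/(p-1)} \to 2$ from above. Thus no $\alpha > 2$ is admissible and $\alpha_* \le 2$. To bound $\beta_*$ below, fix $K = P(2,\underbrace{3,\dots,3}_{m})$; a single even parameter makes $K$ a knot, and its standard diagram is reduced alternating, so $c[K] = 3m+2$ by the Kauffman--Murasugi--Thistlethwaite theorem. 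The bridge number of $K$ grows linearly in the number of tangles --- in particular $\bridge[K] \ge m$ --- by Boileau and Zieschang's determination of bridge numbers of Montesinos knots. Combining this with the chain $\bridge[K] < \superbridge[K] \le \tfrac12\stick[K]$ from the proof of \Cref{thm:exact-sticks} gives $\stick[K] \ge 2\,\superbridge[K] \ge 2\bridge[K] + 2 \ge 2m + 2 > \tfrac23(3m+2) = \tfrac23\,c[K]$. Letting $m \to \infty$ drives $c[K] \to \infty$, so no $\beta < \tfrac23$ is admissible and $\beta_* \ge \tfrac23$.

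The one ingredient that is not a direct citation is the bridge-number input for the pretzel family, so I expect the linear-in-$m$ lower bound for $\bridge[P(2,3,\dots,3)]$ to be the real obstacle: everything downstream is bookkeeping once the crossing number and a bridge estimate are in hand. A smaller but genuine subtlety is logical rather than geometric --- I must ensure the single constant $C$ of the statement serves the lower and upper inequalities simultaneously (take the maximum of the two thresholds), and that the torus-knot ratio approaching $2$ \emph{from above} genuinely obstructs every $\alpha > 2$ at arbitrarily large crossing number, not merely in the limit.
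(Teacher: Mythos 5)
Your proposal is correct and follows essentially the same route as the paper: a Negami-type quadratic crossing count for $\alpha_* \geq \sqrt{2}$ (the paper cites Calvo's slight sharpening of it), the $(p,p+1)$-torus knots with $\stick = 2p+2$ for $\alpha_* \leq 2$ (the paper cites Adams et al.\ rather than Jin for this fact), the Huh--Oh bound for $\beta_* \leq \nicefrac{3}{2}$, and the pretzel knots $P(2,3,\dots,3)$ combined with the bridge--superbridge--stick inequality for $\beta_* \geq \nicefrac{2}{3}$. The only caveat is the one you flagged yourself: the paper obtains $\bridge[P_m] = m+1$ by citing Baader--Blair--Kjuchukova alongside Boileau--Zieschang (whose theorem alone does not cleanly cover the tangle of slope $\nicefrac{1}{2}$), though your weaker bound $\bridge[P_m] \geq m$ would still suffice for the arithmetic.
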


\begin{proof}[Proof of \Cref{thm:asymptotic picture}]
	In 1997, Adams et al.~\cite{adamsStickNumbersComposition1997} proved that the $(p+1,p)$-torus knot has $\stick[T_{p+1,p}] = 2p+2$. Since $c[T_{p+1,p}] = p^2-1$, this implies that
	\begin{equation}\label{eq:min torus stick}
		\stick[T_{p+1,p}] = 2\sqrt{c[T_{p+1,p}]+1}+2.
	\end{equation}
	Hence, $\alpha_*$ is finite and $\leq 2$, though this leaves open the possibility that $\alpha_*=0$.

	This is ruled out by the general bound proved by Calvo~\cite{calvoGeometricKnotSpaces2001} (slightly improving an earlier bound by Negami~\cite{negamiRamseyTheoremsKnots1991}): for all knots $K$,
	\[
		\stick[K] \geq \sqrt{2}\sqrt{c[K]+\nicefrac{1}{8}} + \frac{7}{2}.
	\]
	Hence, $\alpha_* \geq \sqrt{2}$ and, combining with \eqref{eq:min torus stick}, we conclude that $\alpha_* \in [\!\sqrt{2},2]$.

	Turning to the upper bound on stick number, the best general bound was proved by Huh and Oh~\cite{huhUpperBoundStick2011}: for all nontrivial knots $K$,
	\begin{equation}\label{eq:general upper bound}
		\stick[K] \leq \frac{3}{2}c[K]+\frac{3}{2}.
	\end{equation}
	Therefore, $\beta_* \leq \nicefrac{3}{2}$.

	\begin{figure}[htbp]
		\centering
		\includegraphics[height=1.5in]{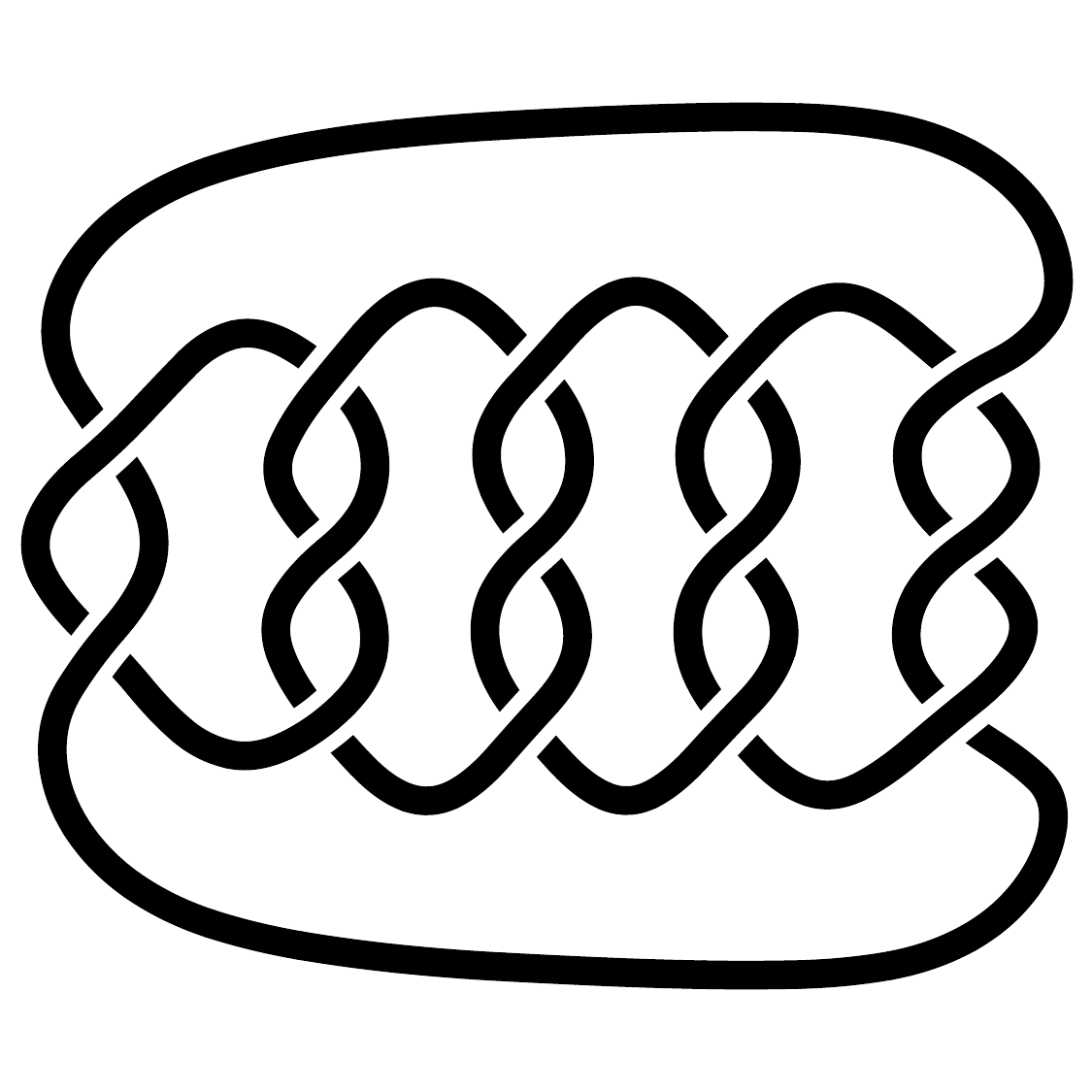}
		\caption{The pretzel knot $P_4 = P(2,3,3,3,3)$.}
		\label{fig:pretzel}
	\end{figure}

	To rule out the possibility that $\beta_* = 0$, we now construct an infinite family of knots whose stick numbers are linear in their crossing numbers. 
	Consider the pretzel knots $P_m \ceq P(2,3,\dots , 3)$, where $m$ is the number of $3$s. 
	An alternating diagram for $P_4$ is shown in \Cref{fig:pretzel}; there are similar diagrams for all $P_m$.  
	Since these diagrams are reduced and alternating, they are minimal crossing number diagrams~\cite{MR899057} showing that $c[P_m] = 3m+2$. 
	
	On the other hand, the bridge index $\bridge[P_m] = m+1$~\cite{baaderCoxeterGroupsMeridional2021,boileauNombrePontsGenerateurs1985}, so \eqref{eq:bridge-superbridge-stick inequality} implies that
	\[
		\stick[P_m] \geq 2\superbridge[P_m] \geq 2 (\bridge[P_m] + 1) = \frac{2}{3}c[P_m] + \frac{8}{3}.
	\]
	Therefore, $\beta_* \geq \nicefrac{2}{3}$; combining this with the Huh–Oh upper bound \eqref{eq:general upper bound} yields the result $\beta_* \in [\nicefrac{2}{3},\nicefrac{3}{2}]$.
\end{proof}

We observe that for all the $12$- and $13$-crossing knots, we were able to show that  $\stick[K] \leq c[K]$, suggesting that $\beta_* \leq 1$. (This is known for $2$-bridge knots by~\cite{huhStickNumbers2bridge2011}.)

\section{Discussion and Future Directions}
\label{sec:discussion}

Our algorithm always matched or improved on existing bounds for stick numbers of knots through 13 crossings. To test it further, we compared it to the paper~\cite{johnsonStickRamseyNumbers2013}, which gives a handcrafted construction for polygonal representatives of the $(q,2)$-torus knots with $\left\lfloor \frac{2}{3}(q+1) \right\rfloor + 4$ edges. Since $c[T_{q,2}] = q$, 
this implies that $\stick[T_{q,2}] \lesssim \frac{2}{3}c[T_{q,2}]$. We tried $3 \leq q \leq 31$, finding a configuration with stick number equal to the bound above in each case. Interestingly, we were unable to improve on this result, suggesting that it may be optimal.

We also tried $T_{q,3}$ torus knots, for $4 \leq q \leq 26$, obtaining the data in~\autoref{tab:torus knots 2}. This data is clearly highly structured, and suggests that $\stick[T_{q,3}] \lesssim\frac{5}{16} c[T_{q,3}]$ for large $q$, but we don't have an explicit conjectured formula;
compare with the known result $\stick[T_{q,3}] \lesssim \frac{1}{3} c[T_{q,3}]$ from~\cite{johnsonStickRamseyNumbers2013}.

We provide polygonal configurations for these torus knots (plus all other torus knots with $2\leq p < q \leq 32$) in the dataset~\cite{dataverse-torus}.

\begin{table}[ht]
	\label{tab:torus knots 2}
	\begin{tabular}{c@{\hskip 0.25in}cccccccccccccccccccccc}
		\toprule
		$q$                    & 4 & 5  & 7  & 8  & 10 & 11 & 13 & 14 & 16 & 17 & 19 & 20 & 22 & 23 & 25 & 26 \\
		\midrule
		$c[T_{q,3}]$           & 8 & 10 & 14 & 16 & 20 & 22 & 26 & 28 & 32 & 34 & 38 & 40 & 44 & 46 & 50 & 52 \\
		$\stick[T_{q,3}] \leq$ & 8 & 10 & 12 & 12 & 13 & 14 & 15 & 16 & 18 & 18 & 19 & 20 & 21 & 22 & 23 & 24 \\
		\bottomrule
	\end{tabular}
	\caption{Computed bounds for stick numbers of $T_{q,3}$ torus knots. It would be interesting to find a formula or systematic construction of polygons that matches this data.}
\end{table}


We next constructed lattice embeddings of twist knots \(K_p\) for a range of \(p\)-values and then minimised them through our BFACF-style algorithms, obtaining the data in~\autoref{tab:twist knots}. The polygonal configurations we found are available in the dataset~\cite{dataverse-twist}.
\begin{table}[ht]
	\centering
	\label{tab:twist knots}
	\begin{tabular}{c@{\hskip 0.25in}cccccccccc}
		\toprule
		$p$                & 1  & 2  & 3  & 4  & 5  & 6  & 7  & 8  & 9  & 10 \\
		\midrule
		$c[K_p]$           & 3  & 4  & 5  & 6  & 7  & 8  & 9  & 10 & 11 & 12
		\\
		$\stick[K_p] \leq$ & 6  & 7  & 8  & 8  & 9  & 10 & 10 & 11 & 12 & 12 \\
		\toprule
		$p$                & 11 & 12 & 13 & 14 & 15 & 16 & 17 & 18 & 19 & 20
		\\		\midrule
		$c[K_p]$           & 13 & 14 & 15 & 16 & 17 & 18 & 19 & 20 & 21 & 22
		\\
		$\stick[K_p] \leq$ & 13 & 14 & 14 & 15 & 16 & 16 & 17 & 18 & 18 & 19 \\
		\bottomrule
	\end{tabular}
	\caption{Stick number bounds obtained by our algorithm for the twist knot $K_p$ where $p$ is the number of half-twists. The data is consistent with the conjecture $\stick[K_p] \leq \left\lfloor \frac{2p}{3} \right\rfloor + 6$.}
\end{table}

For these torus and twist knots, we found configurations with many fewer sticks than the crossing number of the knot. On the other hand, we tried to find low-stick configurations of the pretzel knots $P_m$ from the proof of \autoref{thm:asymptotic picture} for $3 \leq m \leq 28$. With some modest effort, we could not reduce the number of edges below $c[P_m]-1$; see the polygon coordinates at~\cite{dataverse-pretzel}. This (weakly) suggests that $\beta_* \geq 1$ and makes it seem interesting to try to find polygonal versions of these knots with fewer sticks. We hope that someone will try.

\begin{figure}[ht]
	\label{fig:complicated examples}
	\centering
	\subfloat[{\begin{minipage}{.4\textwidth}
					\centering
					$28 \leq c[K] \leq 32$\qquad$\stick[K] \leq 25$
				\end{minipage}}]{\includegraphics[width=.2\textwidth,height=.2\textwidth,keepaspectratio,valign=c]{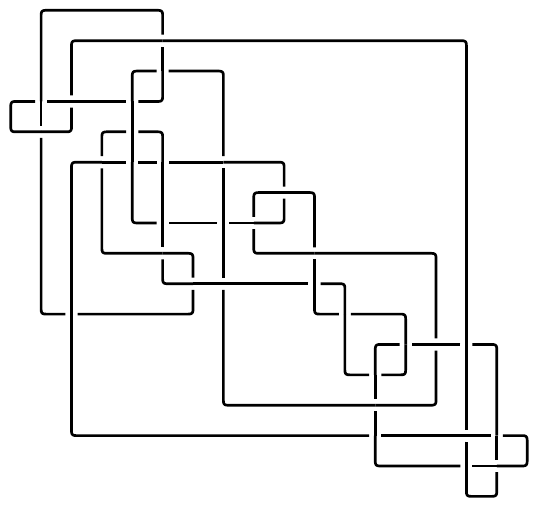}\quad\includegraphics[width=.2\textwidth,height=.2\textwidth,keepaspectratio,valign=c]{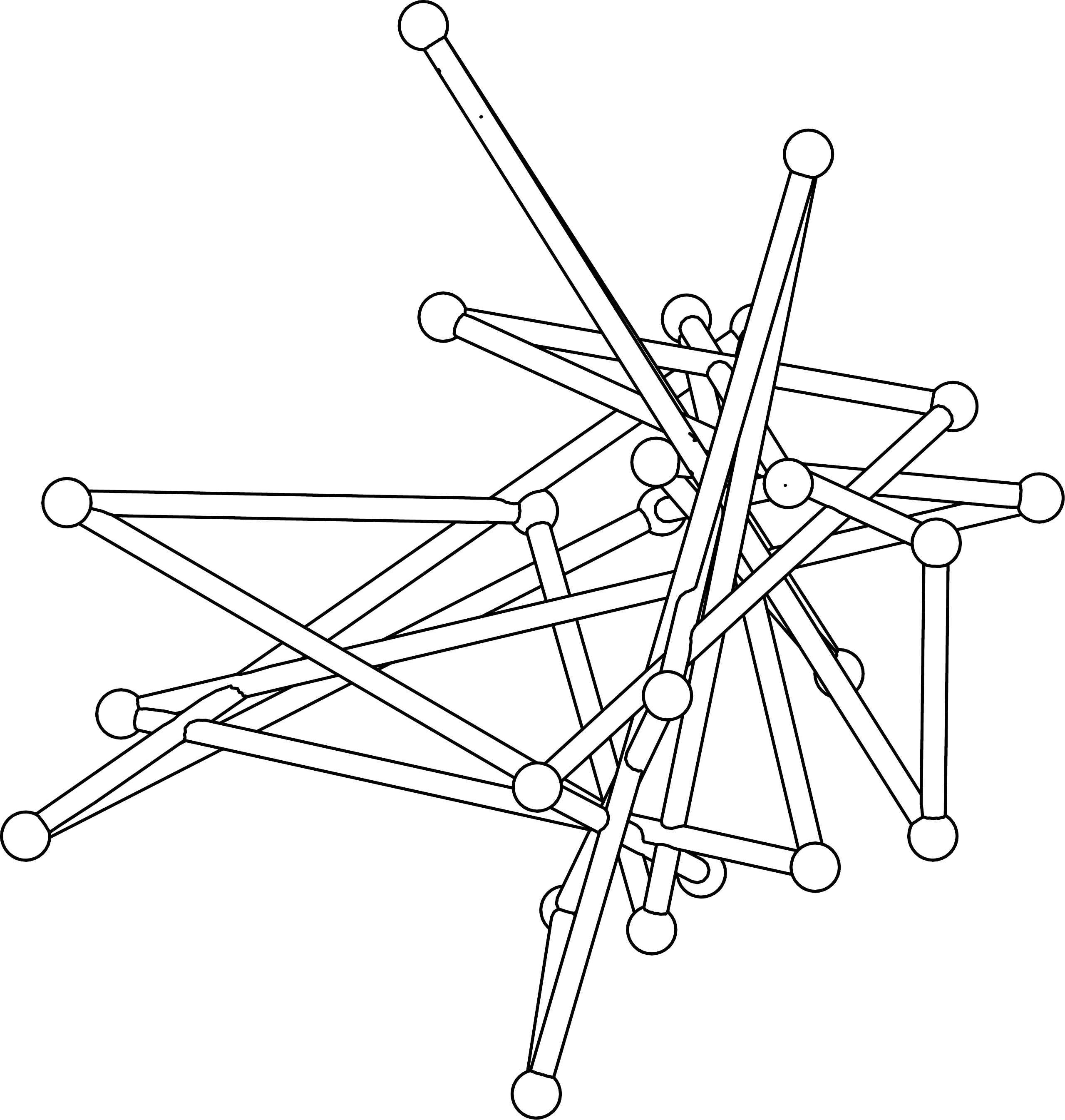}}
	\qquad \qquad
	\subfloat[{\begin{minipage}{.4\textwidth}
					\centering
					$46 \leq c[K] \leq 64$\qquad$\stick[K] \leq 40$
				\end{minipage}}]{\includegraphics[width=.2\textwidth,height=.2\textwidth,keepaspectratio,valign=c]{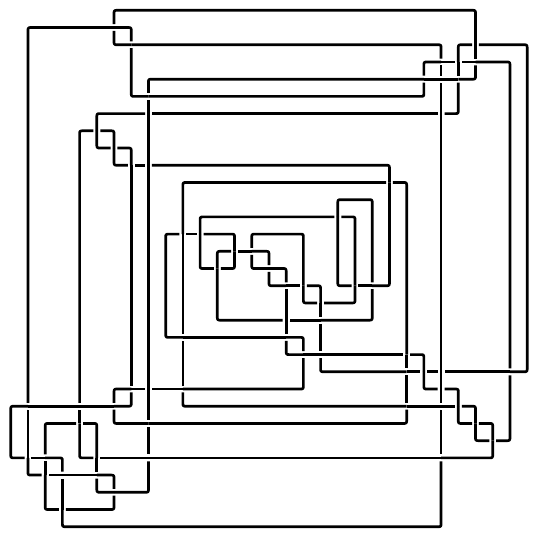}\quad \includegraphics[width=.2\textwidth,height=.2\textwidth,keepaspectratio,valign=c]{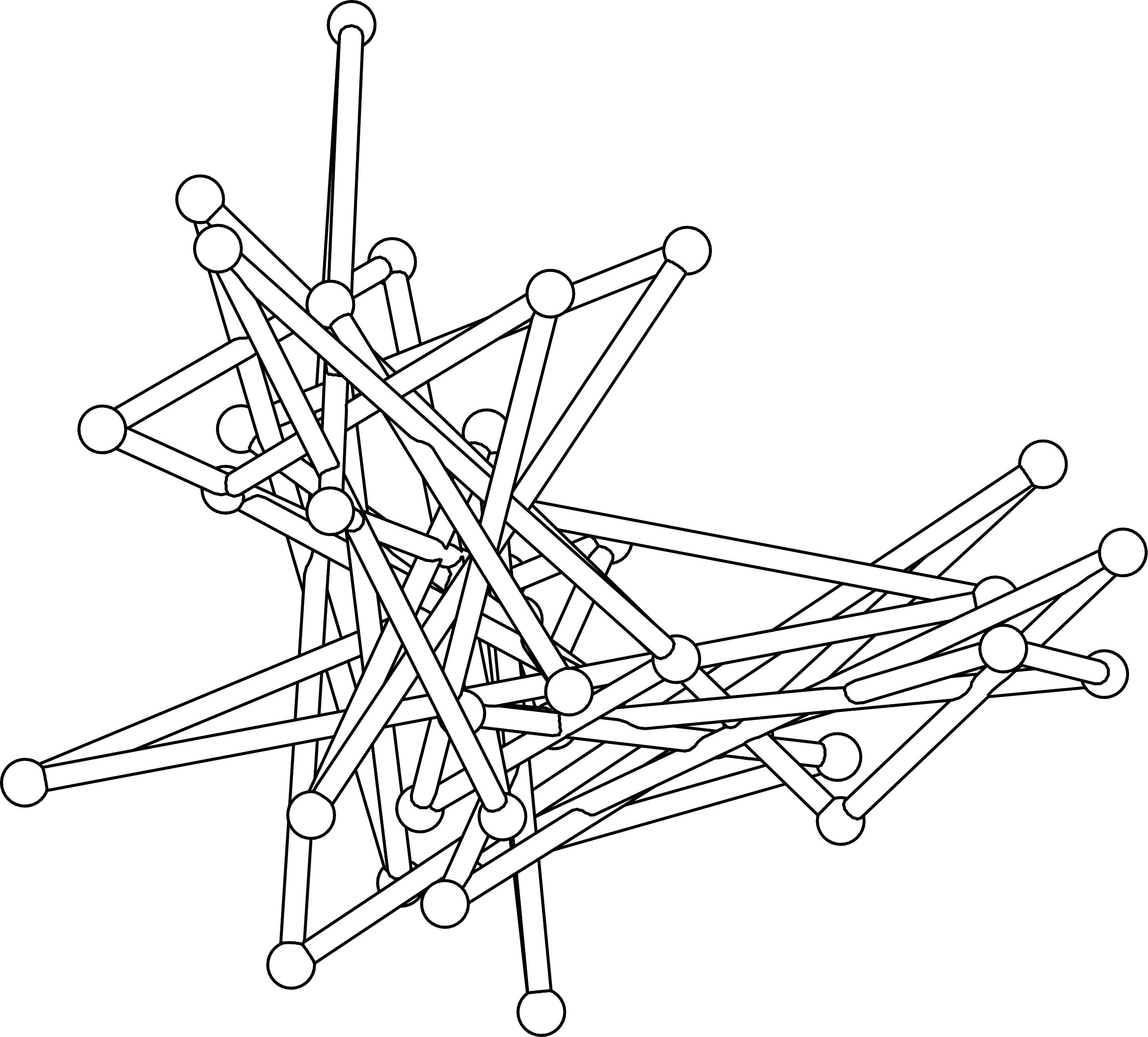}}\\
	\vspace{.2in}
	\subfloat[{\begin{minipage}{.4\textwidth}
					\centering
					$c[K] = 32$ \qquad$\stick[K] \leq 23$
				\end{minipage}}]{\includegraphics[width=.2\textwidth,height=.2\textwidth,keepaspectratio,valign=c]{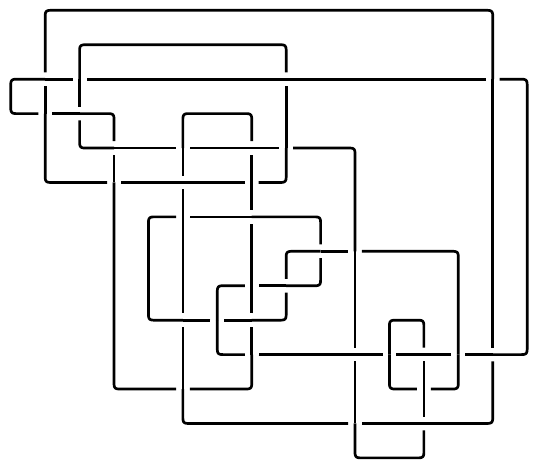}\quad \includegraphics[width=.2\textwidth,height=.2\textwidth,keepaspectratio,valign=c]{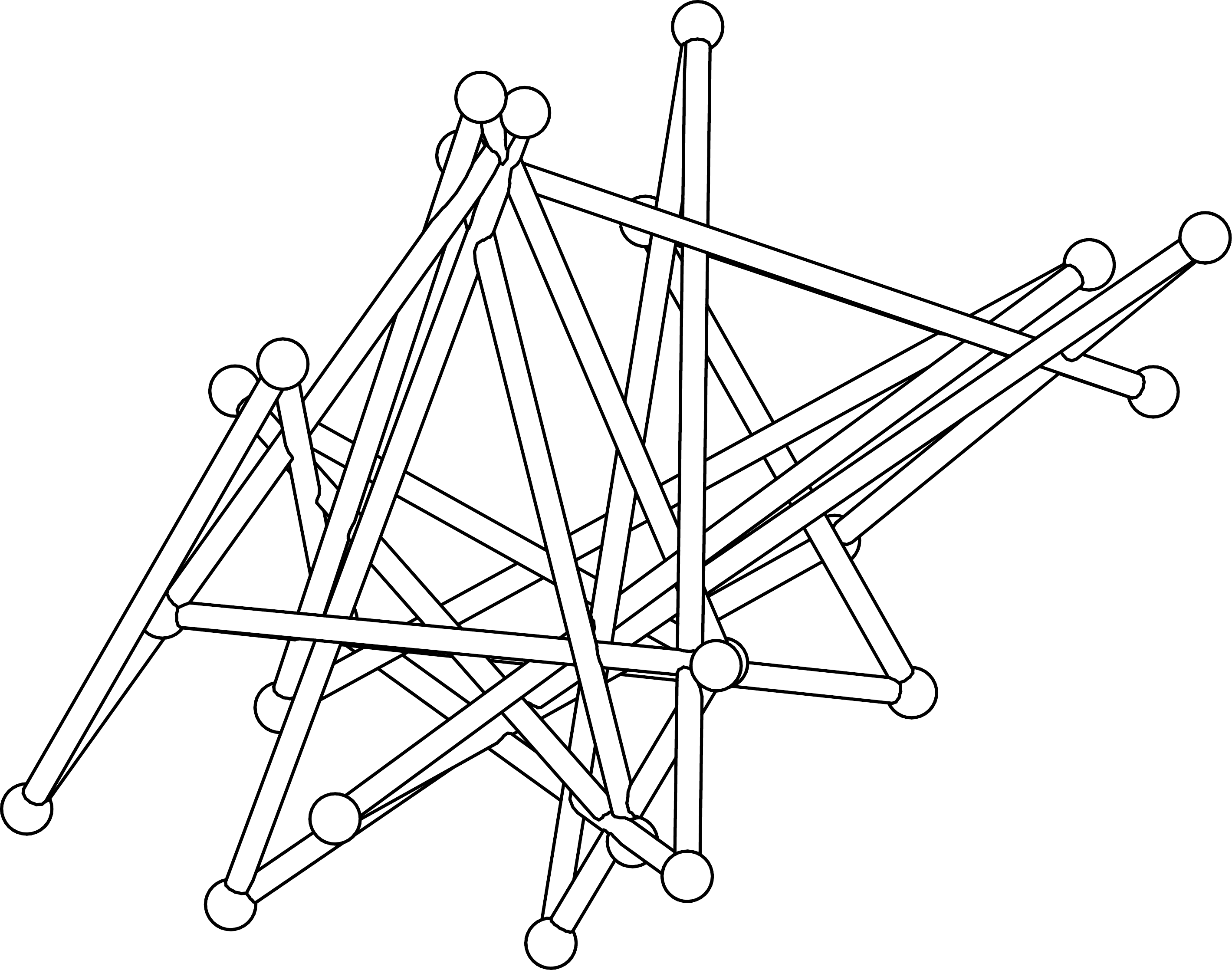}}
	\qquad \qquad
	\subfloat[{\begin{minipage}{.4\textwidth}
					\centering
					$c[K] = 64$ \qquad$\stick[K] \leq 48$
				\end{minipage}}]{\includegraphics[width=.2\textwidth,height=.2\textwidth,keepaspectratio,valign=c]{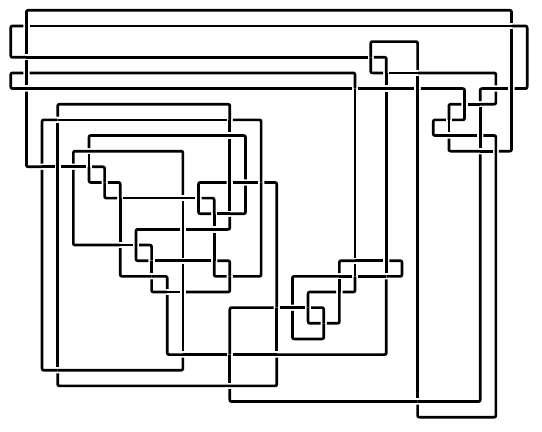}\quad \includegraphics[width=.2\textwidth,height=.2\textwidth,keepaspectratio,valign=c]{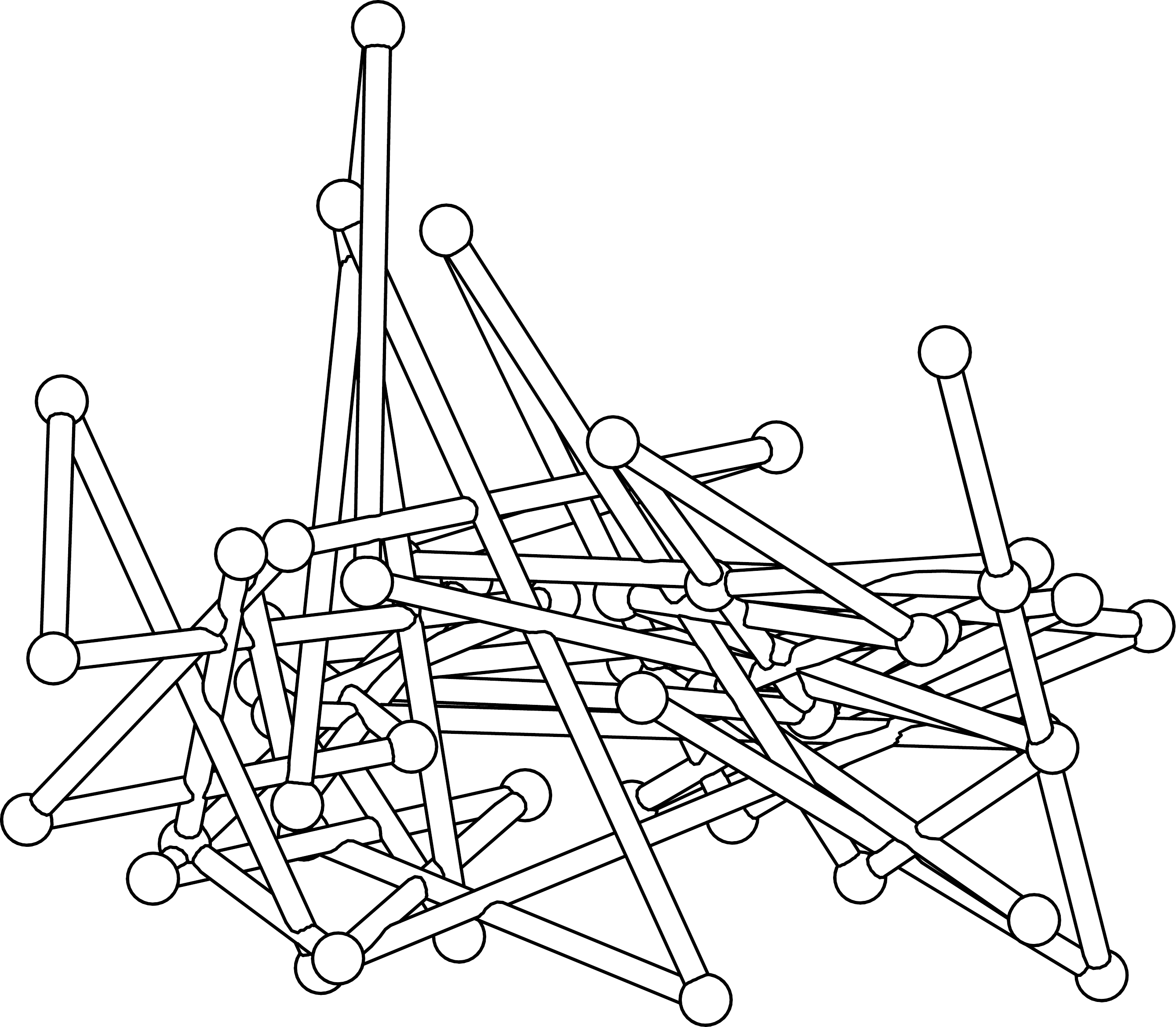}}
	\caption{We tried four large, untabulated knots to see if their stick number could be bounded above by crossing number.
		The non-alternating knots in the top row were prime summands of equilateral random polygons with a few thousand edges.
		The alternating knots on the bottom row were produced by changing crossings in diagrams generated the same way.
		All four of the resulting knots are hyperbolic, and therefore prime. The alternating diagrams are reduced, and hence the
		crossing number of the diagram is equal to the crossing number of the knot. We believe the non-alternating diagrams
		are minimal or nearly so, but can't prove this; the lower bound on crossing number comes from the span of the Jones
		polynomial. We also believe the stick embeddings are not far from minimal. Note that in each case the bound on
		stick number is smaller than the crossing number.}
\end{figure}

We also generated some large random (untabulated) knots of up to 64 crossings by taking prime summands of equilateral random polygons with a few thousand edges. See the diagrams in \autoref{fig:complicated examples} (or larger in \autoref{fig:complicated examples 1}) and the underlying data at~\cite{dataverse-big}. From each diagram we constructed
a lattice embedding (see \autoref{fig:complicated examples 2}) using \knoodle's \texttt{REAPR} embedding. These were far from minimal and they were reduced significantly using the BFACF algorithm as described in \Cref{sec:methods}. This was perhaps
the slowest step in bounding the stick-number; letting the very flat initial embeddings ``relax'' sometimes required allowing them to grow to almost twice the initial length before they reduced down to far more manageable sizes.
Once much shorter lattice embeddings were found (see \autoref{fig:complicated examples 3}), we reduced them first with unit-sticks (\autoref{fig:complicated examples 4}) and
finally sticks of any length (see \autoref{fig:complicated examples} or larger in \autoref{fig:complicated examples 5}). In each case we were able to reduce the configuration down to one that required fewer sticks than its crossing number.

Since we have not been able to find a counterexample, we propose the following conjecture, which is slightly stronger than the conjecture that $\beta_* \leq 1$:

\begin{conjecture}\label{conj:stick bound}
	For all knots $K$ with $c[K] \geq 12$,
	\[
		\stick[K] \leq c[K].
	\]
\end{conjecture}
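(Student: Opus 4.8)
The plan is to attack Conjecture~\ref{conj:stick bound} constructively, producing from each knot an explicit polygon with at most $c[K]$ edges, and to organize the argument around the prime decomposition. First I would reduce to prime knots: the connected-sum inequality $\stick[K_1 \# K_2] \leq \stick[K_1] + \stick[K_2] - 3$ of Adams et al.~\cite{adamsStickNumbersComposition1997}, together with additivity of crossing number (a theorem for alternating knots), lets a composite knot inherit the bound from its prime factors with room to spare---\emph{except} for connected sums of many small factors, where the savings per summand are too weak (iterating $n$ trefoils yields only $\stick \leq 3n + O(1) \approx c + O(1)$, not $\leq c$). This borderline composite case, together with the fact that crossing-number additivity is open in general, is exactly why the hypothesis $c[K] \geq 12$ cannot simply be pushed down; the prime knots with $12 \leq c[K] \leq 13$ then serve as the base case, since we have verified the bound for all of them.

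With the composite case set aside, the heart of the matter is to improve the Huh--Oh bound~\eqref{eq:general upper bound}, which has slope $\nicefrac{3}{2}$, down to slope $1$. Their construction spends a bounded number of sticks per crossing of a minimal diagram; the goal is to amortize this overhead away using the global structure of the diagram---its planarity, the $4$-valence of the underlying graph, and the cyclic structure of the Gauss code---so that on average each crossing costs a single stick. It seems most efficient to split by diagram type. For non-alternating prime knots the arc index satisfies $\alpha[K] \leq c[K] + 1$ (Bae and Park), so a sufficiently economical realization of an arc presentation as a polygon, charging at most one stick per arc and reusing sticks along the binding axis, would already deliver the bound. Alternating knots, where $\alpha[K] = c[K] + 2$ is too large, would instead be handled through their twist-region (bigon) structure directly---precisely the mechanism behind the known $2$-bridge case~\cite{huhStickNumbers2bridge2011}.

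I expect the main obstacle to be producing a \emph{single} construction whose slope is exactly $1$ on the rigid extremal families while remaining valid everywhere. The data show that torus and twist knots sit far below $c[K]$, leaving enormous slack; but the pretzel knots $P_m \ceq P(2,3,\dots,3)$ appear to require close to $c[P_m]$ sticks, and we proved $\stick[P_m] \geq \nicefrac{2}{3}\,c[P_m] + \nicefrac{8}{3}$ via the bridge--superbridge inequality~\eqref{eq:bridge-superbridge-stick inequality}. Any construction proving the conjecture must therefore be essentially tight precisely on the knots of large bridge index, where geometric rigidity---not mere crossing count---forces sticks. Reconciling a crossing-counting upper bound with a bridge-driven lower bound, uniformly and at slope $1$, is the crux, and I suspect it will demand a genuinely new way of converting diagrammatic complexity into edges rather than a refinement of the existing per-crossing bookkeeping.
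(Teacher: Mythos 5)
There is an important mismatch of expectations here: \autoref{conj:stick bound} is a \emph{conjecture}, not a theorem. The paper offers no proof of it at all---only the computational evidence that every prime knot through 13 crossings admits a realization with $\stick[K] \leq c[K]$, plus a handful of larger examples---so any complete argument would be a genuinely new result, not a reconstruction of something in the paper. Your proposal, read on its own terms, is a research program rather than a proof, and you concede as much in the final paragraph (``I suspect it will demand a genuinely new way of converting diagrammatic complexity into edges''). That honesty is appropriate, but it means the central claim is never established.

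The concrete gaps are these. First, the pivotal step---realizing an arc presentation with ``one stick per arc, reusing sticks along the binding axis''---fails as stated: every arc of an arc presentation has both endpoints on the binding axis, so a one-stick arc is a segment joining two points of a line and hence lies \emph{in} that line; the whole polygon degenerates onto the axis and the page data encoding the over/under structure is lost. Forcing arcs to bend away from the axis is precisely where the $\nicefrac{3}{2}$ in the Huh--Oh bound \eqref{eq:general upper bound} comes from~\cite{huhUpperBoundStick2011}, so your route is not an amortization of their overhead but a collision with the same obstruction. Second, the composite case is not actually closed: for a connected sum of $n$ trefoils the Adams inequality~\cite{adamsStickNumbersComposition1997} gives only $\stick \leq 3n+3$ against $c = 3n$, and you cannot appeal to an inductive structure in which the verified $12$--$13$ crossing knots are a ``base case,'' because nothing in the proposal supplies an inductive step from $c$ crossings to $c+1$; the verification covers exactly those two crossing numbers and no more. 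Third, the alternating case is only a pointer to the $2$-bridge result~\cite{huhStickNumbers2bridge2011} with no construction for general alternating knots, which (since alternating knots have maximal arc index) are exactly the knots your arc-presentation mechanism would handle worst. Your closing observation---that the pretzel knots $P(2,3,\dots,3)$ force any proof to be tight at slope close to $1$ on high-bridge-index knots---is a sound and useful framing of the difficulty, and is consistent with the paper's own discussion, but it diagnoses the obstacle rather than overcoming it.
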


In general, it would be interesting to know the distribution of $\stick[K]$ over all prime knots with a given number of crossings. \Cref{thm:asymptotic picture} gives quite a broad range, but is the distribution weighted towards one end or the other? In principle, given Burton and Thistlethwaite's classifications~\cite{burtonNext350Million2020,thistlethwaiteEnumerationClassificationPrime2025} and sufficient computer time, one could collect data on this distribution for all prime knot types through 20 crossings.

We close with one last observation. As described in~\Cref{sec:methods}, we produced equilateral configurations of each knot type as an intermediate step to finding our minimal stick configurations. We have not tried too hard to optimize these equilateral configurations, but it remains an open question whether \emph{equilateral stick number}---that is, the minimum number of segments in any polygonal realization in which all segments have the same length---is a distinct invariant from stick number. In 2002 Rawdon and Scharein~\cite{rawdonUpperBoundsEquilateral2002} produced a list of potential examples of knots for which these invariants differ. That list has been whittled away over the years~\cite{millettPhysicalKnotTheory2012,eddyNewStickNumber2022,shonkwilerAllPrimeKnots2022}; the only remaining example of a knot with $\leq 10$ crossings for which the best known bounds on stick number and equilateral stick number differ is $9_{29}$, which has $\stick[9_{29}] = 9$~\cite{calvoGeometricKnotTheory1998,schareinInteractiveTopologicalDrawing1998}. We can now prove that the equilateral stick number is also 9:

\begin{theorem}\label{thm:eq 9_29}
	The knot $9_{29}$ has equilateral stick number equal to 9.
\end{theorem}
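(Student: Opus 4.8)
The plan is to prove the two inequalities $\eqstick[9_{29}] \geq 9$ and $\eqstick[9_{29}] \leq 9$ separately. The lower bound is immediate: every equilateral polygonal realization is in particular a polygonal realization, so $\eqstick[K] \geq \stick[K]$ for every knot type $K$, and since $\stick[9_{29}] = 9$ is already known~\cite{calvoGeometricKnotTheory1998,schareinInteractiveTopologicalDrawing1998}, we get $\eqstick[9_{29}] \geq 9$ for free. All the content is in the upper bound, where I must exhibit an honest equilateral $9$-gon of knot type $9_{29}$.

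For the upper bound it is convenient to normalize the common edge length to $1$ and record an equilateral $9$-gon by its nine unit edge vectors $u_1, \dots, u_9 \in S^2$, subject only to the closure condition $u_1 + \dots + u_9 = 0$. An embedded closed polygon assembled from such vectors and having knot type $9_{29}$ is exactly an equilateral $9$-stick $9_{29}$, so the task reduces to producing a single point of this closure variety with the correct knot type. The equilateral stage of the algorithm of \Cref{sec:methods} already yields near-equilateral low-stick configurations of $9_{29}$ as an intermediate product, and for this knot one can drive it down to nine sticks; the knot type of the output is confirmed by the \pyknotid/\snappy/\knoodle\ pipeline described there. Thus a numerical configuration is in hand, and the only remaining issue is to make the existence claim rigorous.

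The main obstacle is precisely this last point: the annealer's output is only \emph{approximately} equilateral, while the statement demands exactly equal edge lengths. I would close the gap with a certified-numerics argument. Starting from the approximate unit edge vectors $\tilde u_i$, I project onto the closure variety: the map $(u_1, \dots, u_9) \mapsto \sum_i u_i$ from $(S^2)^9$ to $\R^3$ is a submersion at any configuration in which not all edges are parallel (two non-parallel edges already make the images of the tangent planes $u_i^\perp$ span $\R^3$), which is automatic for a knotted polygon. A quantitative implicit-function (Newton--Kantorovich) estimate then produces a genuine solution $u_i$ of $\sum_i u_i = 0$ within an explicit distance $\eps$ of $\tilde u_i$, and this $u_i$ defines a bona fide equilateral $9$-gon. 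To guarantee the knot type is unchanged, I combine the $\eps$ bound with an explicit lower bound $\delta$ on the minimum distance between non-adjacent edges of the configuration (a quantity the paper already tracks); if $\eps$ is small relative to $\delta$, the straight-line homotopy from $\tilde u_i$ to $u_i$ sweeps out no self-intersection, so the knot type stays $9_{29}$ throughout and the resulting equilateral $9$-gon certifies $\eqstick[9_{29}] \leq 9$.

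An alternative, fully symbolic route would be to search directly for nine unit vectors with algebraic coordinates (equivalently, rational squared lengths summing appropriately) that sum to exactly zero, and then verify embeddedness and knot type from the exact coordinates. This avoids certified numerics but trades it for the harder Diophantine-flavored problem of landing on the closure variety exactly with manageable coordinates. Either way the conceptual content is identical, and the delicate step is the certification that a \emph{truly} equilateral configuration, not merely a near-equilateral one, realizes $9_{29}$ in nine sticks.
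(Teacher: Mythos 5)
Your decomposition is the same as the paper's: the lower bound is immediate from $\eqstick[K] \geq \stick[K]$ and $\stick[9_{29}] = 9$~\cite{calvoGeometricKnotTheory1998,schareinInteractiveTopologicalDrawing1998}, and the upper bound rests on certifying that a numerically near-equilateral, knot-type-verified $9$-gon can be perturbed to an \emph{exactly} equilateral one without changing its knot type. Where you genuinely diverge is in the certification tool. The paper does not build this perturbation argument by hand; it invokes a theorem of Millett and Rawdon~\cite{millettEnergyRopelengthOther2003}, which guarantees a nearby equilateral realization of the same knot type whenever $|L_i - 1| < \min\left\{\mu(P)/n, \mu(P)^2/4\right\}$, where $\mu(P)$ is the minimum distance between non-adjacent edges, and then just checks this inequality on its data (edge-length deviation $\approx 2.2 \times 10^{-16}$ against $\mu(P) \approx 1.8 \times 10^{-4}$, after massaging the annealer output via near-unit-edge moves, rescaling, and conformal barycenter closure~\cite{cantarellaComputingConformalBarycenter2022}). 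Your Newton--Kantorovich plan on the closure variety $\{(u_1,\dots,u_9) \in (S^2)^9 : \sum u_i = 0\}$ is, in effect, a re-derivation of that lemma: the submersion observation is correct (two non-parallel edges suffice), and with the paper's numbers the quantitative estimates would close. What the paper's route buys is that the only thing requiring verification is a single inequality on floating-point data; what yours buys is self-containedness, at the cost of producing explicit implicit-function-theorem constants that the Millett--Rawdon citation makes unnecessary.

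Two technical wrinkles in your sketch need repair, though neither is fatal. First, your ``straight-line homotopy from $\tilde u_i$ to $u_i$'' lives in edge-vector space: at intermediate times the edge vectors do not sum to zero, so the intermediate configurations are open arcs, not closed polygons, and ``the knot type stays $9_{29}$ throughout'' is not meaningful as stated. The fix is to interpolate the \emph{vertex positions} of the two closed polygons (closure is automatic for any vertex path) and run the non-self-intersection check on that family, then invoke polygonal isotopy extension. Second, a perturbation of size $\eps$ in each edge vector moves vertices by as much as $n\eps$, since errors accumulate along the partial sums defining the vertices; so the clearance comparison must be $n\eps$ versus $\delta$, not $\eps$ versus $\delta$. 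This factor of $n$ is precisely the $\mu(P)/n$ appearing in the Millett--Rawdon hypothesis~\eqref{eq:millett rawdon}, so your bound should be stated the same way.
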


\begin{figure}[t]
	\centering
	\includegraphics[height=2in]{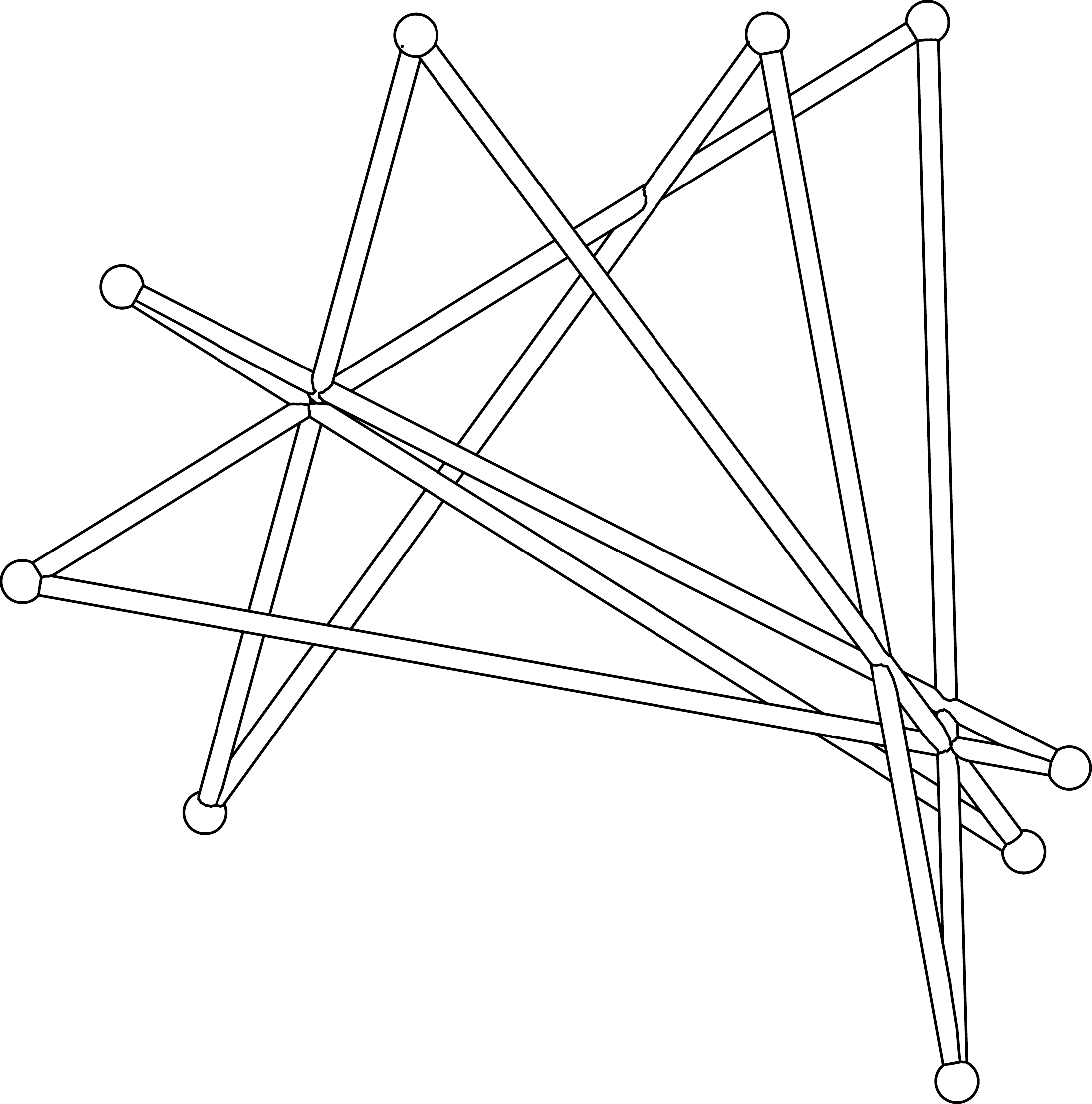}

	\vspace{.3in}

	\sisetup{table-format = 2.18, table-alignment-mode = format}

	\begin{footnotesize}
		\begin{tabular}{SSS}
			-0.01705188882962603  & -0.8472070769441673  & 0.5309890788547935   \\
			0.041386237542474724  & 0.04265321980605685  & 0.07851406556265417  \\
			-0.006817402438744069 & -0.9406813617024611  & 0.25381241096850504  \\
			-0.028582515259995032 & -0.09019744875337588 & 0.7793629734063082   \\
			0.06629662310464379   & -0.06256778735228902 & -0.21574232298902563 \\
			0.06914020976103849   & -0.24079725362023752 & 0.7682425200694282   \\
			-0.3617541314019883   & -0.7692376495316422  & 0.03675033643198222  \\
			0.29177162736175377   & -0.5711160126667256  & 0.7672651546514633   \\
			0.                    & 0.                   & 0.                   \\
		\end{tabular}
	\end{footnotesize}
	\caption{Our equilateral 9-stick $9_{29}$ shown in orthographic perspective, viewed from the direction of the positive $x$-axis relative to the vertex coordinates given below the visualization. These coordinates are also given in the dataset~\cite{dataverse-table}.}
	\label{fig:9_29}
\end{figure}

\begin{proof}
	Since $\stick[9_{29}] = 9$ and equilateral stick number must be at least as big, it suffices to find a 9-stick realization of $9_{29}$ with equal-length edges. Such a realization, together with its vertex coordinates, is shown in \autoref{fig:9_29}. This polygon was found in two steps. First, we took the (non-equilateral) output of our simulated annealer and minimized the deviation from unit-length edges without changing knot type by repeatedly trying to replace a triangle formed by consecutive edges by a triangle with at least one unit-length edge. This eventually produced a 9-edge $9_{29}$ with all edge lengths in the interval $[1, 1.00000455]$. We can think of this polygon as a set of $9$ edge displacement vectors, all with lengths very close to 1, which sum to zero. Rescaling these vectors to length exactly 1 (in double precision) gives us a numerically equilateral polygon which fails to close by a small amount. Applying conformal barycenter closure~\cite{cantarellaComputingConformalBarycenter2022} gives us a closed polygon which is numerically equilateral; we checked using \knoodle\ that this polygon is still $9_{29}$.

	Of course, this realization is only approximately equilateral; to prove there is a true equilateral $9_{29}$ we use a result of Millett and Rawdon~\cite{millettEnergyRopelengthOther2003}, which says that, for an $n$-edge polygonal realization $P$ of a knot type $K$, there is a nearby equilateral realization of $K$ with unit-length edges if, for all $i=1, \dots , n$,
	\begin{equation}\label{eq:millett rawdon}
		|L_i - 1| < \min\left\{\frac{\mu(P)}{n},\frac{\mu(P)^2}{4}\right\}.
	\end{equation}
	Here $L_i$ is the length of the $i$th edge and $\mu(P)$ is the minimum distance between any two non-adjacent edges of $P$.

	Although our $9_{29}$ looks singular to the eye, the minimum distance between non-adjacent edges is $1.84536 \times 10^{-4}$ (between the fourth and eighth edges), whereas the maximum deviation from unit length is $2.22045 \times 10^{-16}$ (for both the first and ninth edges). Hence, for our $9_{29}$ we have
	\[
		|L_i - 1| < 10^{-7.58}\min\left\{\frac{\mu(P)}{9},\frac{\mu(P)^2}{4}\right\}
	\]
	for all $i=1,\dots , 9$, so it easily satisfies the Millett–Rawdon condition~\eqref{eq:millett rawdon}.
\end{proof}

It seems unlikely that stick number and equilateral stick number are always the same, but it remains a substantial challenge to find a specific knot on which they differ.

\section*{Acknowledgments}
We are grateful to Se-Goo Kim and to Chuck Livingston and Allison Moore at KnotInfo~\cite{knotinfo} for providing 3D coordinates of all knots through 13 crossings, and to Robert Lipshitz for his Blender tutorials~\cite{lipshitz-blender}. This paper was inspired by conversations at the Banff International Research Station workshop on \emph{Knot Theory Informed by Random Models and Experimental Data} in April, 2024, so we are very grateful to the workshop organizers and to BIRS for catalyzing this project. We are also very grateful to BIRS for hosting us in July, 2025 for the Research in Teams workshop on \emph{Stick Numbers and Polygonal Knot Theory}, where much of the work on this paper was done. In addition, we would like to acknowledge the generous support of NSERC, the National Science Foundation (DMS--2107700 to Shonkwiler), and the Simons Foundation (\#524120 to Cantarella).

\clearpage

\appendix

\section{Stick Number Bounds for Prime Knots Through 10 Crossings}
\label{sec:rolfsen table}

We give here the number of edges in the configurations we found for all knots in the Rolfsen table, which gives an upper bound on the stick number.
For knots with \known{boxed} edge count, $\stick[K]$ is known exactly and in each case our bound matches this known value.
For knots with \newbound{bracketed} edge count, our bound beats the best previous upper bound on $\stick[K]$.
For the remaining knots, $\stick[K]$ is not known but our examples match the best existing bounds.

\begin{center}
	\begin{multicols*}{4}
		\TrickSupertabularIntoMulticols

		\tablefirsthead{
			$K$ & edges \\
			\midrule
		}
		\tablehead{
			$K$ & edges \\
			\midrule
		}
		\tablelasttail{\midrule}

		\setlength{\tabcolsep}{5pt}

		\begin{supertabular*}{.14\textwidth}{lc}
			\label{tab:new stick bounds}
			\hspace{-.047in}$3_1$ & \known{$6$} \\[1ex]
			\hline\\[-2ex]
			$4_1$ & \known{$7$} \\[1ex]
			\hline\\[-2ex]
			$5_1$ & \known{$8$} \\ \shrinkheight{9pt}
			$5_2$ & \known{$8$} \\[1ex]
			\hline\\[-2ex]
			$6_1$ & \known{$8$} \\
			$6_2$ & \known{$8$} \\
			$6_3$ & \known{$8$} \\[1ex]
			\hline\\[-2ex]
			$7_1$ & \known{$9$} \\
			$7_2$ & \known{$9$} \\
			$7_3$ & \known{$9$} \\
			$7_4$ & \known{$9$} \\
			$7_5$ & \known{$9$} \\
			$7_6$ & \known{$9$} \\
			$7_7$ & \known{$9$} \\[1ex]
			\hline\\[-2ex]
			$8_1$ & $10$ \\
			$8_2$ & $10$ \\
			$8_3$ & $10$ \\
			$8_4$ & $10$ \\
			$8_5$ & $10$ \\
			$8_6$ & $10$ \\
			$8_7$ & $10$ \\
			$8_8$ & $10$ \\
			$8_9$ & $10$ \\
			$8_{10}$ & $10$ \\
			$8_{11}$ & $10$ \\
			$8_{12}$ & $10$ \\
			$8_{13}$ & $10$ \\
			$8_{14}$ & $10$ \\ \shrinkheight{9pt}
			$8_{15}$ & $10$ \\
			$8_{16}$ & \known{$9$} \\
			$8_{17}$ & \known{$9$} \\
			$8_{18}$ & \known{$9$} \\
			$8_{19}$ & \known{$8$} \\
			$8_{20}$ & \known{$8$} \\
			$8_{21}$ & \known{$9$} \\[1ex]
			\hline \\[-2ex]
			$9_1$ & $10$ \\
			$9_2$ & $10$ \\
			$9_3$ & $10$ \\
			$9_4$ & $10$ \\
			$9_5$ & $10$ \\
			$9_6$ & \newbound{$10$} \\
			$9_7$ & $10$ \\
			$9_8$ & $10$ \\
			$9_9$ & $10$ \\
			$9_{10}$ & $10$ \\
			$9_{11}$ & $10$ \\
			$9_{12}$ & $10$ \\
			$9_{13}$ & $10$ \\
			$9_{14}$ & $10$ \\
			$9_{15}$ & $10$ \\ \shrinkheight{9pt}
			$9_{16}$ & $10$ \\
			$9_{17}$ & $10$ \\
			$9_{18}$ & $10$ \\
			$9_{19}$ & $10$ \\
			$9_{20}$ & $10$ \\
			$9_{21}$ & $10$ \\
			$9_{22}$ & $10$ \\
			$9_{23}$ & \newbound{$10$} \\
			$9_{24}$ & $10$ \\
			$9_{25}$ & $10$ \\
			$9_{26}$ & $10$ \\
			$9_{27}$ & $10$ \\
			$9_{28}$ & $10$ \\
			$9_{29}$ & \known{$9$} \\
			$9_{30}$ & $10$ \\
			$9_{31}$ & $10$ \\
			$9_{32}$ & $10$ \\
			$9_{33}$ & $10$ \\
			$9_{34}$ & \known{$9$} \\
			$9_{35}$ & \known{$9$} \\
			$9_{36}$ & \newbound{$10$} \\
			$9_{37}$ & $10$ \\
			$9_{38}$ & $10$ \\
			$9_{39}$ & \known{$9$} \\
			$9_{40}$ & \known{$9$} \\
			$9_{41}$ & \known{$9$} \\
			$9_{42}$ & \known{$9$} \\
			$9_{43}$ & \known{$9$} \\
			$9_{44}$ & \known{$9$} \\
			$9_{45}$ & \known{$9$} \\
			$9_{46}$ & \known{$9$} \\
			$9_{47}$ & \known{$9$} \\
			$9_{48}$ & \known{$9$} \\
			$9_{49}$ & \known{$9$} \\[1ex]
		\end{supertabular*}
	\end{multicols*}
\end{center}

\clearpage

\begin{center}

	\begin{multicols*}{5}
		\TrickSupertabularIntoMulticols

		\tablefirsthead{
			$K$ & edges \\
			\midrule
		}
		\tablehead{
			$K$ & edges \\
			\midrule
		}
		\tablelasttail{\bottomrule}

		\setlength{\tabcolsep}{5pt}

		\begin{supertabular*}{.14\textwidth}{lc}
			$10_1$ & $11$ \\ \shrinkheight{-7pt}
			$10_2$ & $11$ \\
			$10_3$ & $11$ \\
			$10_4$ & $11$ \\
			$10_5$ & $11$ \\
			$10_6$ & $11$ \\
			$10_7$ & $11$ \\
			$10_8$ & $10$ \\
			$10_9$ & \newbound{$10$} \\
			$10_{10}$ & \newbound{$10$} \\
			$10_{11}$ & $11$ \\
			$10_{12}$ & $11$ \\
			$10_{13}$ & $11$ \\
			$10_{14}$ & $11$ \\
			$10_{15}$ & $11$ \\
			$10_{16}$ & $10$ \\
			$10_{17}$ & $10$ \\
			$10_{18}$ & $10$ \\
			$10_{19}$ & $10$ \\
			$10_{20}$ & $11$ \\
			$10_{21}$ & $11$ \\
			$10_{22}$ & $11$ \\
			$10_{23}$ & $11$ \\
			$10_{24}$ & $11$ \\
			$10_{25}$ & $11$ \\
			$10_{26}$ & $11$ \\
			$10_{27}$ & $11$ \\
			$10_{28}$ & $11$ \\
			$10_{29}$ & $11$ \\
			$10_{30}$ & $11$ \\
			$10_{31}$ & \newbound{$10$} \\
			$10_{32}$ & $11$ \\
			$10_{33}$ & \newbound{$10$} \\
			$10_{34}$ & $11$ \\
			$10_{35}$ & $11$ \\
			$10_{36}$ & $11$ \\
			$10_{37}$ & \newbound{$11$} \\
			$10_{38}$ & $11$ \\
			$10_{39}$ & $11$ \\
			$10_{40}$ & $11$ \\ \shrinkheight{-7pt}
			$10_{41}$ & $11$ \\
			$10_{42}$ & $11$ \\
			$10_{43}$ & $11$ \\
			$10_{44}$ & $11$ \\
			$10_{45}$ & $11$ \\
			$10_{46}$ & $11$ \\
			$10_{47}$ & $11$ \\
			$10_{48}$ & $10$ \\
			$10_{49}$ & $11$ \\
			$10_{50}$ & $11$ \\
			$10_{51}$ & $11$ \\
			$10_{52}$ & \newbound{$10$} \\
			$10_{53}$ & $11$ \\
			$10_{54}$ & $11$ \\
			$10_{55}$ & $11$ \\
			$10_{56}$ & $10$ \\
			$10_{57}$ & $11$ \\
			$10_{58}$ & $11$ \\
			$10_{59}$ & $11$ \\
			$10_{60}$ & \newbound{$10$} \\
			$10_{61}$ & \newbound{$10$} \\
			$10_{62}$ & $11$ \\
			$10_{63}$ & $11$ \\
			$10_{64}$ & $11$ \\
			$10_{65}$ & $11$ \\
			$10_{66}$ & $11$ \\
			$10_{67}$ & $11$ \\
			$10_{68}$ & $10$ \\
			$10_{69}$ & $11$ \\
			$10_{70}$ & $11$ \\
			$10_{71}$ & $11$ \\
			$10_{72}$ & $11$ \\
			$10_{73}$ & $11$ \\
			$10_{74}$ & $11$ \\ \shrinkheight{-7pt}
			$10_{75}$ & $11$ \\
			$10_{76}$ & \newbound{$11$} \\
			$10_{77}$ & $11$ \\
			$10_{78}$ & $11$ \\
			$10_{79}$ & $11$ \\
			$10_{80}$ & $11$ \\
			$10_{81}$ & $11$ \\
			$10_{82}$ & $10$ \\
			$10_{83}$ & $10$ \\
			$10_{84}$ & $10$ \\
			$10_{85}$ & $10$ \\
			$10_{86}$ & \newbound{$10$} \\
			$10_{87}$ & \newbound{$10$} \\
			$10_{88}$ & $11$ \\
			$10_{89}$ & $11$ \\
			$10_{90}$ & $10$ \\
			$10_{91}$ & $10$ \\
			$10_{92}$ & \newbound{$10$} \\
			$10_{93}$ & $10$ \\
			$10_{94}$ & $10$ \\
			$10_{95}$ & $11$ \\
			$10_{96}$ & \newbound{$10$} \\
			$10_{97}$ & \newbound{$10$} \\
			$10_{98}$ & $11$ \\
			$10_{99}$ & $11$ \\
			$10_{100}$ & $10$ \\
			$10_{101}$ & \newbound{$10$} \\
			$10_{102}$ & $10$ \\
			$10_{103}$ & $10$ \\
			$10_{104}$ & $10$ \\
			$10_{105}$ & $10$ \\
			$10_{106}$ & $10$ \\
			$10_{107}$ & $10$ \\
			$10_{108}$ & $10$ \\
			$10_{109}$ & $10$ \\
			$10_{110}$ & $10$ \\
			$10_{111}$ & $10$ \\
			$10_{112}$ & $10$ \\
			$10_{113}$ & $10$ \\ \shrinkheight{-7pt}
			$10_{114}$ & $10$ \\
			$10_{115}$ & $10$ \\
			$10_{116}$ & $10$ \\
			$10_{117}$ & $10$ \\
			$10_{118}$ & $10$ \\
			$10_{119}$ & $10$ \\
			$10_{120}$ & $10$ \\
			$10_{121}$ & $10$ \\
			$10_{122}$ & $10$ \\
			$10_{123}$ & \newbound{$10$} \\
			$10_{124}$ & \known{$10$} \\
			$10_{125}$ & $10$ \\
			$10_{126}$ & $10$ \\
			$10_{127}$ & $10$ \\
			$10_{128}$ & $10$ \\
			$10_{129}$ & $10$ \\
			$10_{130}$ & $10$ \\
			$10_{131}$ & $10$ \\
			$10_{132}$ & $10$ \\
			$10_{133}$ & $10$ \\
			$10_{134}$ & $10$ \\
			$10_{135}$ & $10$ \\
			$10_{136}$ & $10$ \\
			$10_{137}$ & $10$ \\
			$10_{138}$ & $10$ \\
			$10_{139}$ & $10$ \\
			$10_{140}$ & $10$ \\
			$10_{141}$ & $10$ \\
			$10_{142}$ & $10$ \\
			$10_{143}$ & $10$ \\
			$10_{144}$ & $10$ \\
			$10_{145}$ & $10$ \\
			$10_{146}$ & $10$ \\
			$10_{147}$ & $10$ \\
			$10_{148}$ & $10$ \\
			$10_{149}$ & $10$ \\
			$10_{150}$ & $10$ \\
			$10_{151}$ & $10$ \\
			$10_{152}$ & $10$ \\
			$10_{153}$ & $10$ \\
			$10_{154}$ & \newbound{$10$} \\
			$10_{155}$ & $10$ \\
			$10_{156}$ & $10$ \\
			$10_{157}$ & $10$ \\
			$10_{158}$ & $10$ \\
			$10_{159}$ & $10$ \\
			$10_{160}$ & $10$ \\
			$10_{161}$ & $10$ \\
			$10_{162}$ & $10$ \\
			$10_{163}$ & $10$ \\
			$10_{164}$ & $10$ \\
			$10_{165}$ & $10$ \\[1ex]
		\end{supertabular*}
	\end{multicols*}
\end{center}

\section{From Diagrams to (Nearly) Minimal Stick Realizations of Large Random Knots}
\label{sec:large knot flow}

\begin{figure}[ht]
	\label{fig:complicated examples 1}
	\begin{minipage}{0.45\textwidth}
		\centering
		\includegraphics[width=\textwidth]{figs/32-crossing-nonalt.pdf}\\
		$28 \leq c[K] \leq 32$\\
		$\stick[K] \leq 25$
	\end{minipage}
	\hfil
	\begin{minipage}{0.45\textwidth}
		\centering
		\includegraphics[width=\textwidth]{figs/64-crossing-nonalt.pdf}\\
		$46 \leq c[K] \leq 64$\\
		$\stick[K] \leq 40$
	\end{minipage} \\[2ex]
	\begin{minipage}{0.45\textwidth}
		\centering
		\includegraphics[width=\textwidth]{figs/32-crossing-alt.pdf}\\
		$c[K] = 32$ \\
		$\stick[K] \leq 23$
	\end{minipage}
	\hfil
	\begin{minipage}{0.45\textwidth}
		\centering
		\includegraphics[width=\textwidth]{figs/64-crossing-alt.pdf}\\
		$c[K] = 64$ \\
		$\stick[K] \leq 48$
	\end{minipage}
	\caption{Initial diagrams of four large, untabulated knots. The knots in the top row are non-alternating, and those in the bottom row are alternating.}
\end{figure}

\begin{figure}
	\begin{center}
		\begin{minipage}{0.45\textwidth}
			\centering
			\includegraphics[width=\textwidth]{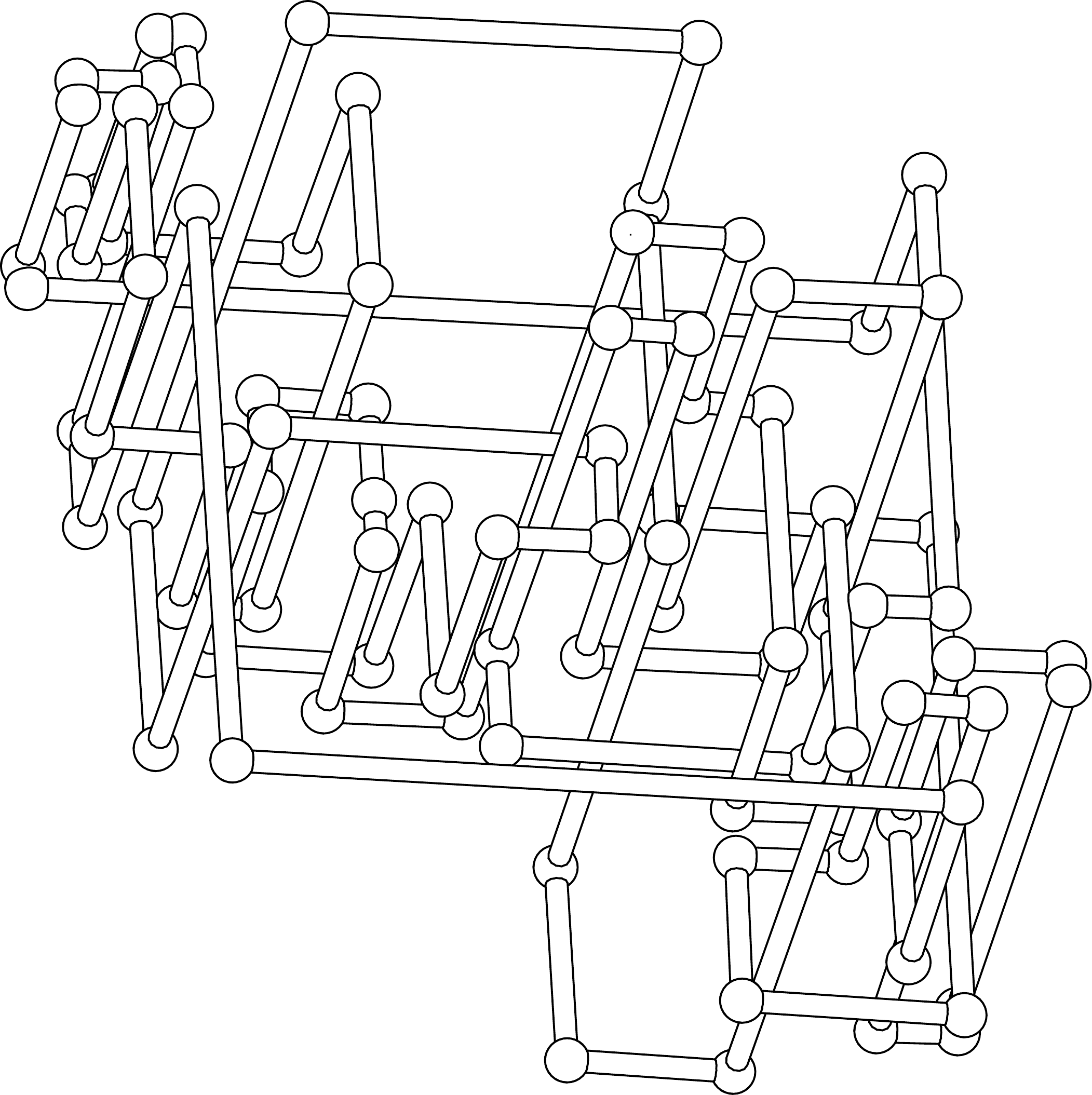} \\
			9242 lattice edges, 90 corners
		\end{minipage}
		\hfil
		\begin{minipage}{0.45\textwidth}
			\centering
			\includegraphics[width=\textwidth]{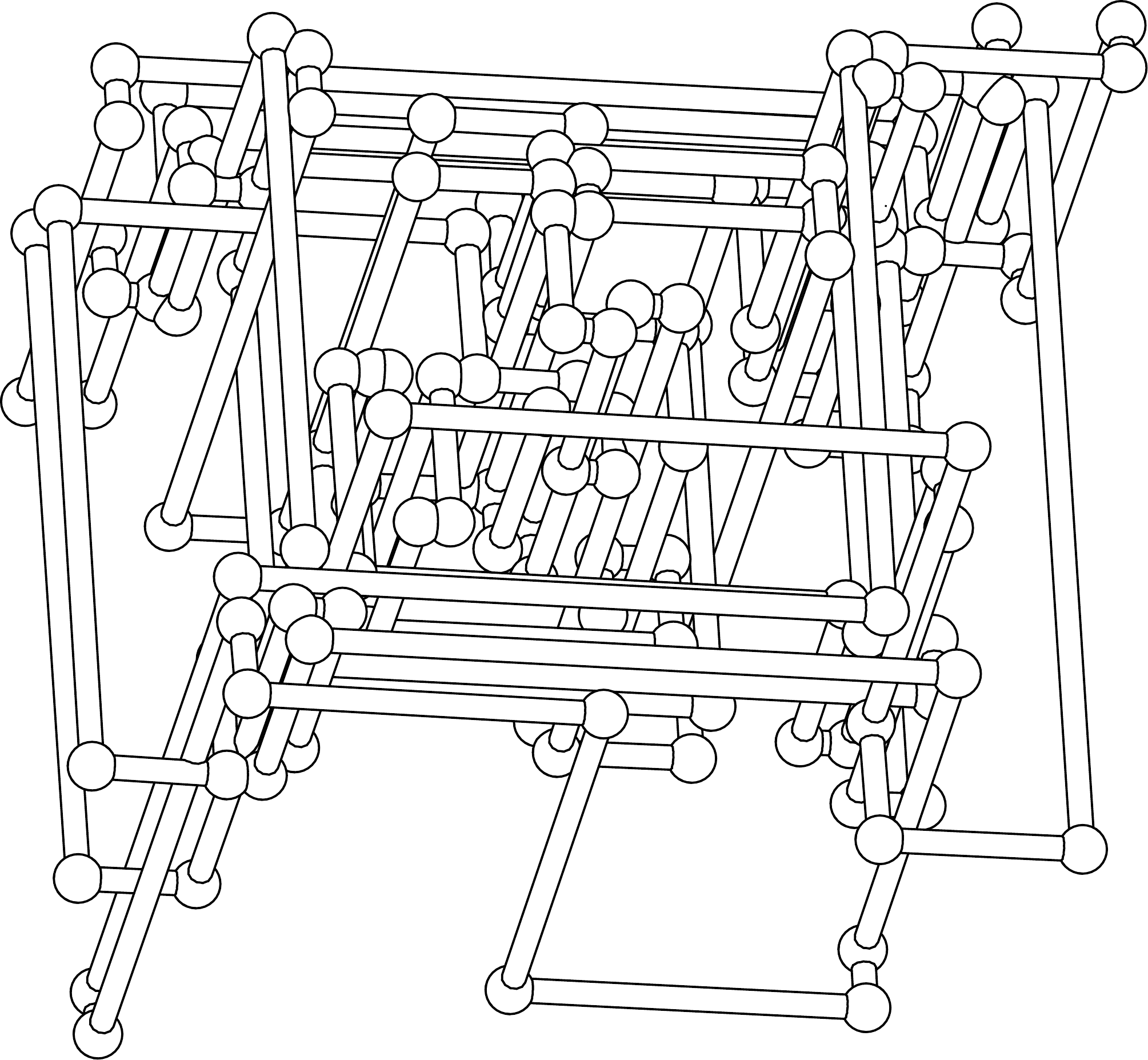} \\
			25720 lattice edges, 156 corners
		\end{minipage} \\[2ex]
		\begin{minipage}{0.45\textwidth}
			\centering
			\includegraphics[width=\textwidth]{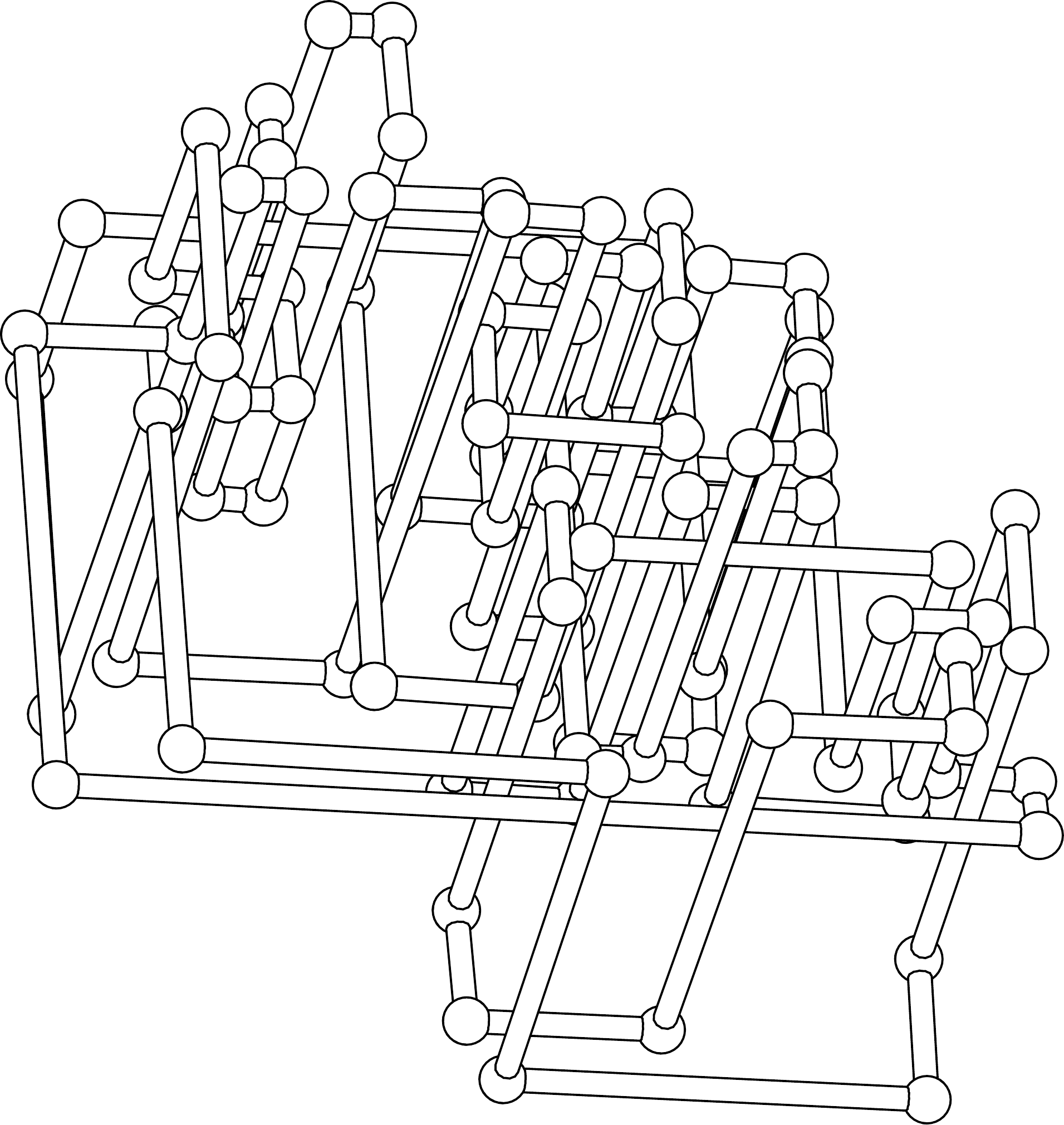} \\
			9212 lattice edges, 100 corners
		\end{minipage}
		\hfil
		\begin{minipage}{0.45\textwidth}
			\centering
			\includegraphics[width=\textwidth]{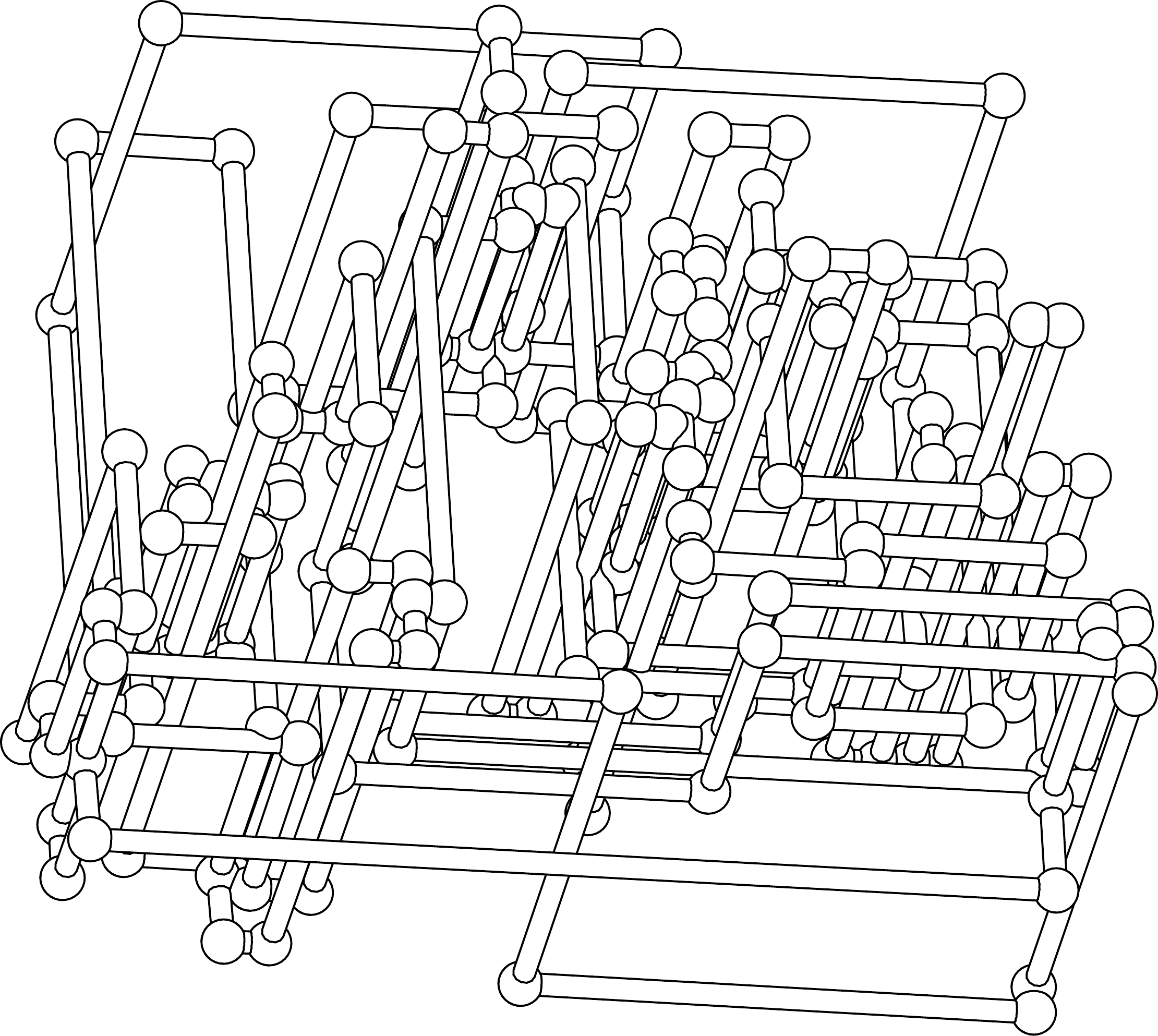} \\
			34420 lattice edges, 208 corners
		\end{minipage}
	\end{center}
	\caption{Initial lattice embeddings of the four untabulated knots given in Figure~\ref{fig:complicated examples 1}.
		These are far from minimal, containing between 9k and 34k lattice edges; accordingly we do not show all vertices but only corners.
	}\label{fig:complicated examples 2}
\end{figure}

\begin{figure}
	\begin{center}
		\begin{minipage}{0.45\textwidth}
			\centering
			\includegraphics[width=\textwidth]{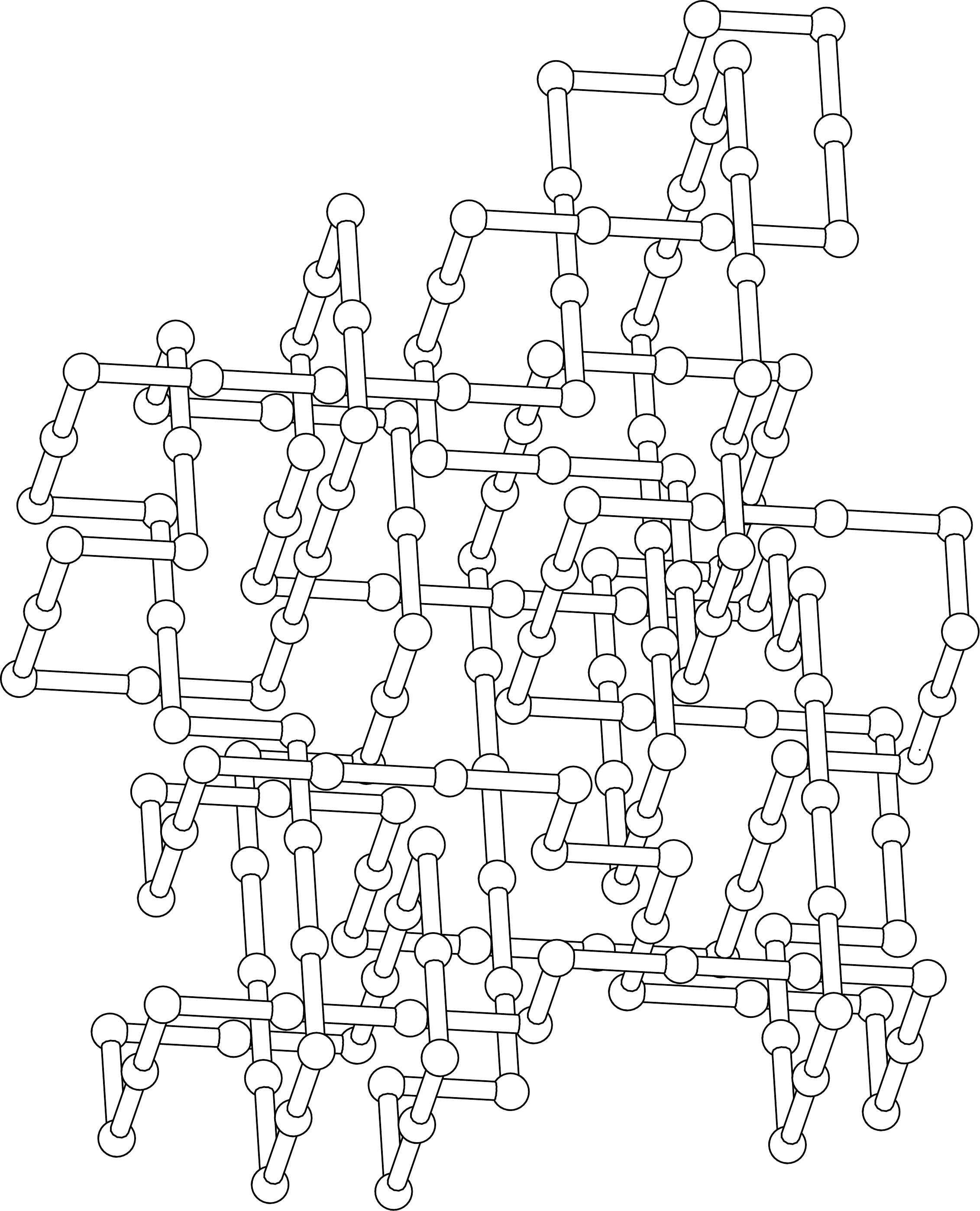} \\
			174 lattice edges
		\end{minipage}
		\hfil
		\begin{minipage}{0.45\textwidth}
			\centering
			\includegraphics[width=\textwidth]{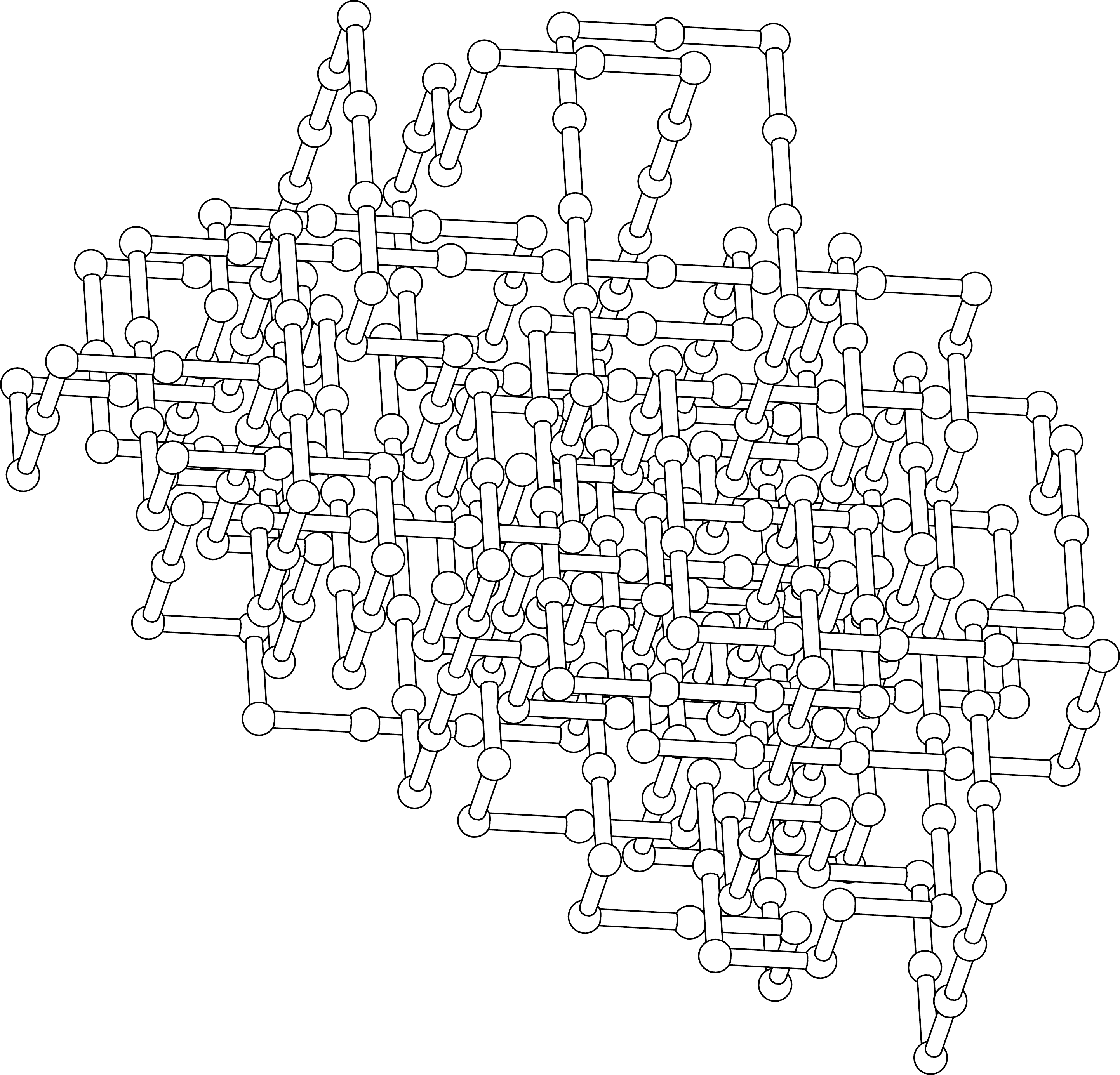} \\
			310 lattice edges
		\end{minipage} \\[2ex]
		\begin{minipage}{0.45\textwidth}
			\centering
			\includegraphics[width=\textwidth]{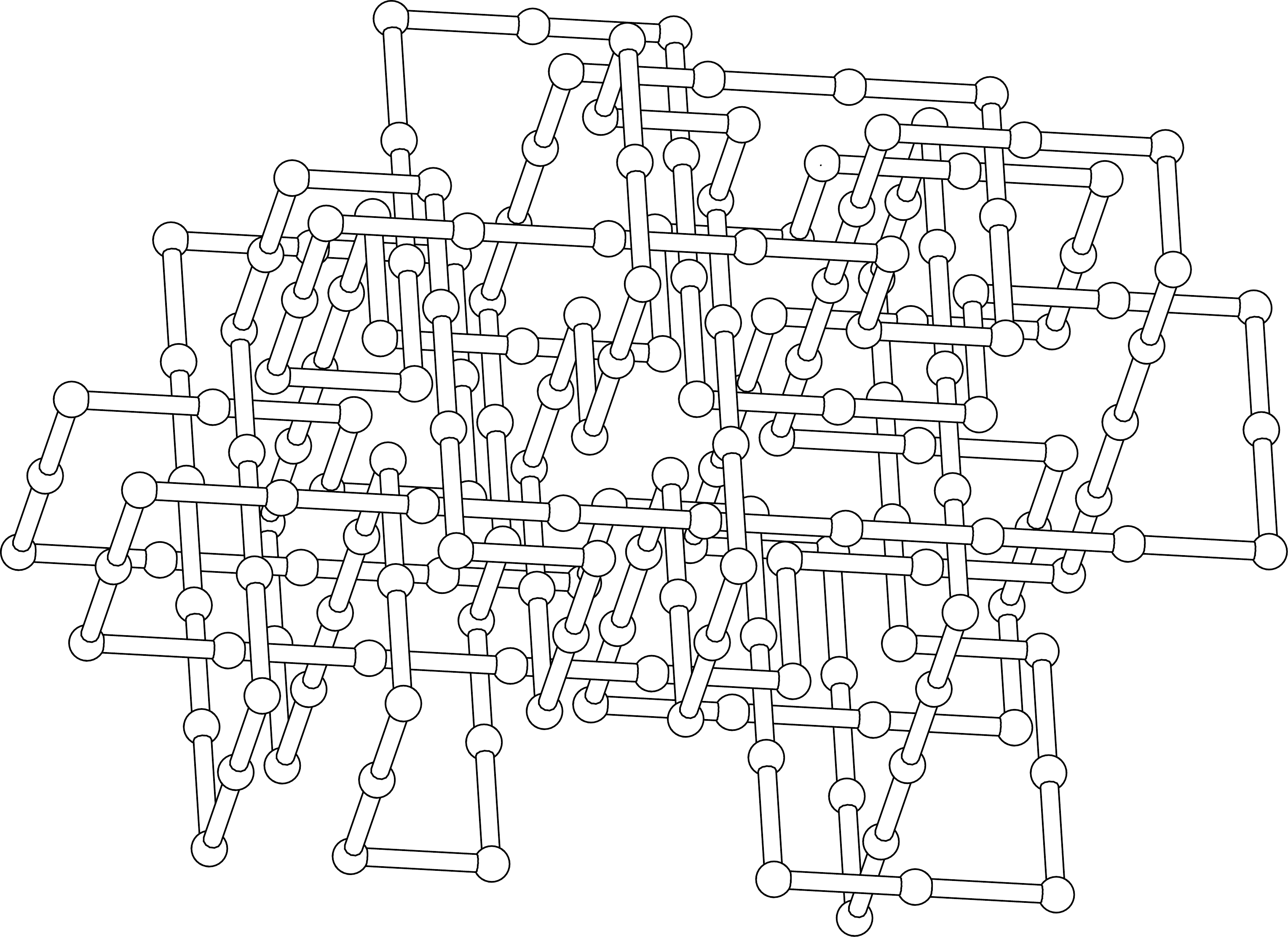} \\
			186 lattice edges
		\end{minipage}
		\hfil
		\begin{minipage}{0.45\textwidth}
			\centering
			\includegraphics[width=\textwidth]{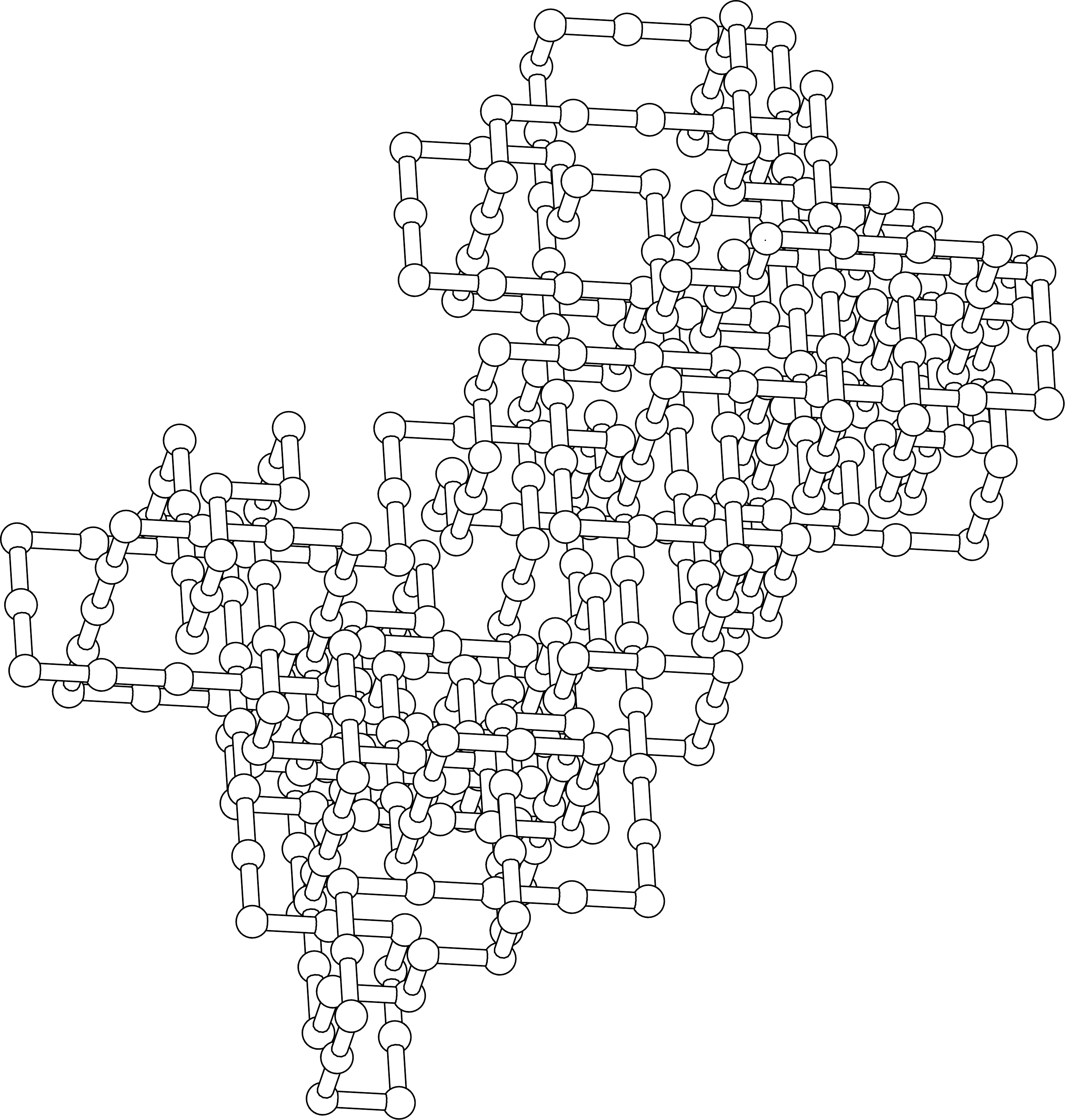} \\
			382 lattice edges
		\end{minipage}
	\end{center}
	\caption{Lattice embeddings of the four untabulated knots given in Figure~\ref{fig:complicated examples 1}.
		These are not the initial lattice embeddings, but have been reduced to a few hundred vertices using the
		BFACF algorithm as described in Section~\ref{sec:methods}.
	}\label{fig:complicated examples 3}
\end{figure}

\begin{figure}
	\begin{center}
		\begin{minipage}{0.45\textwidth}
			\centering
			\includegraphics[width=\textwidth]{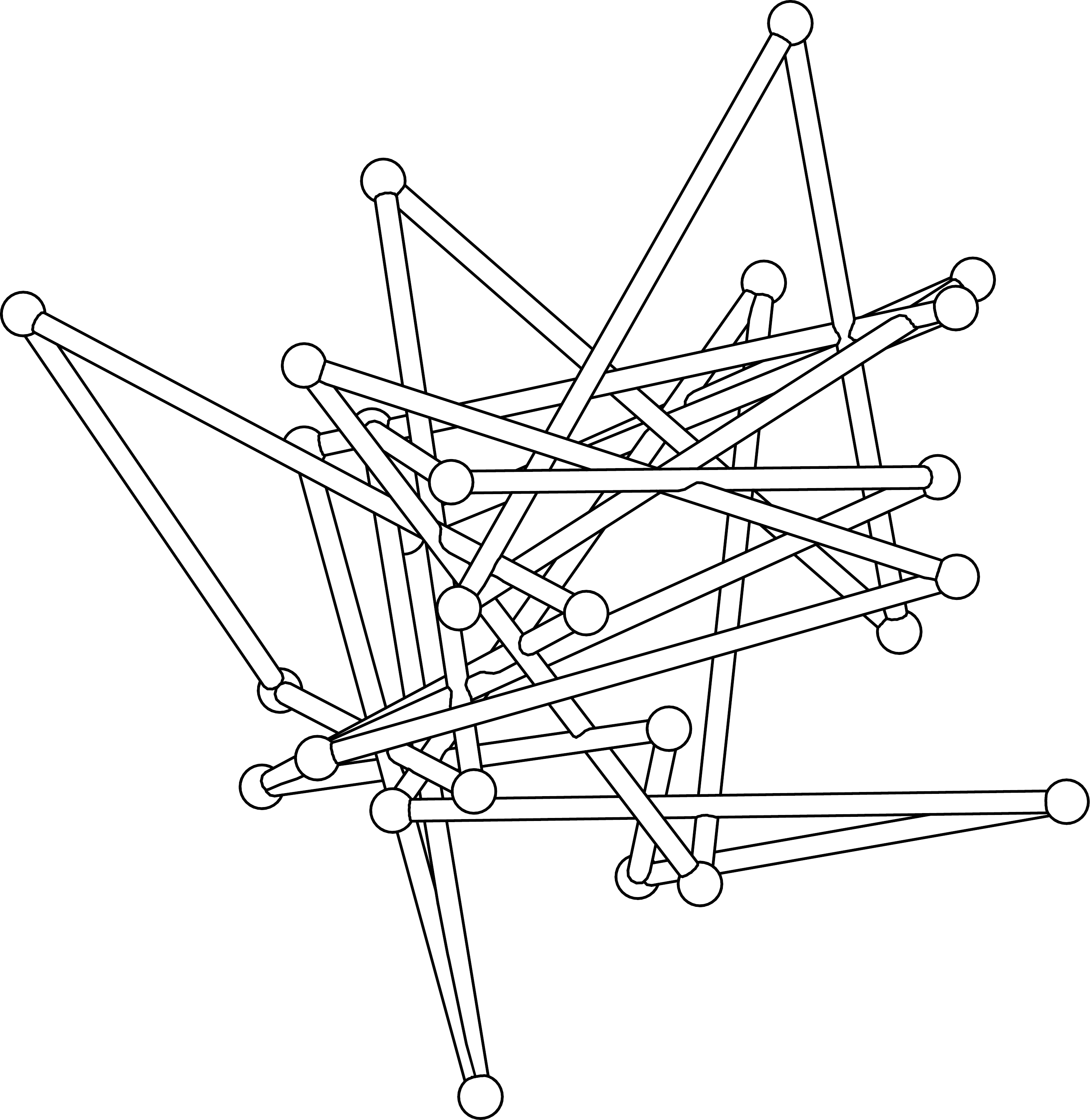}\\
			26 unit sticks
		\end{minipage}
		\hfil
		\begin{minipage}{0.45\textwidth}
			\centering
			\includegraphics[width=\textwidth]{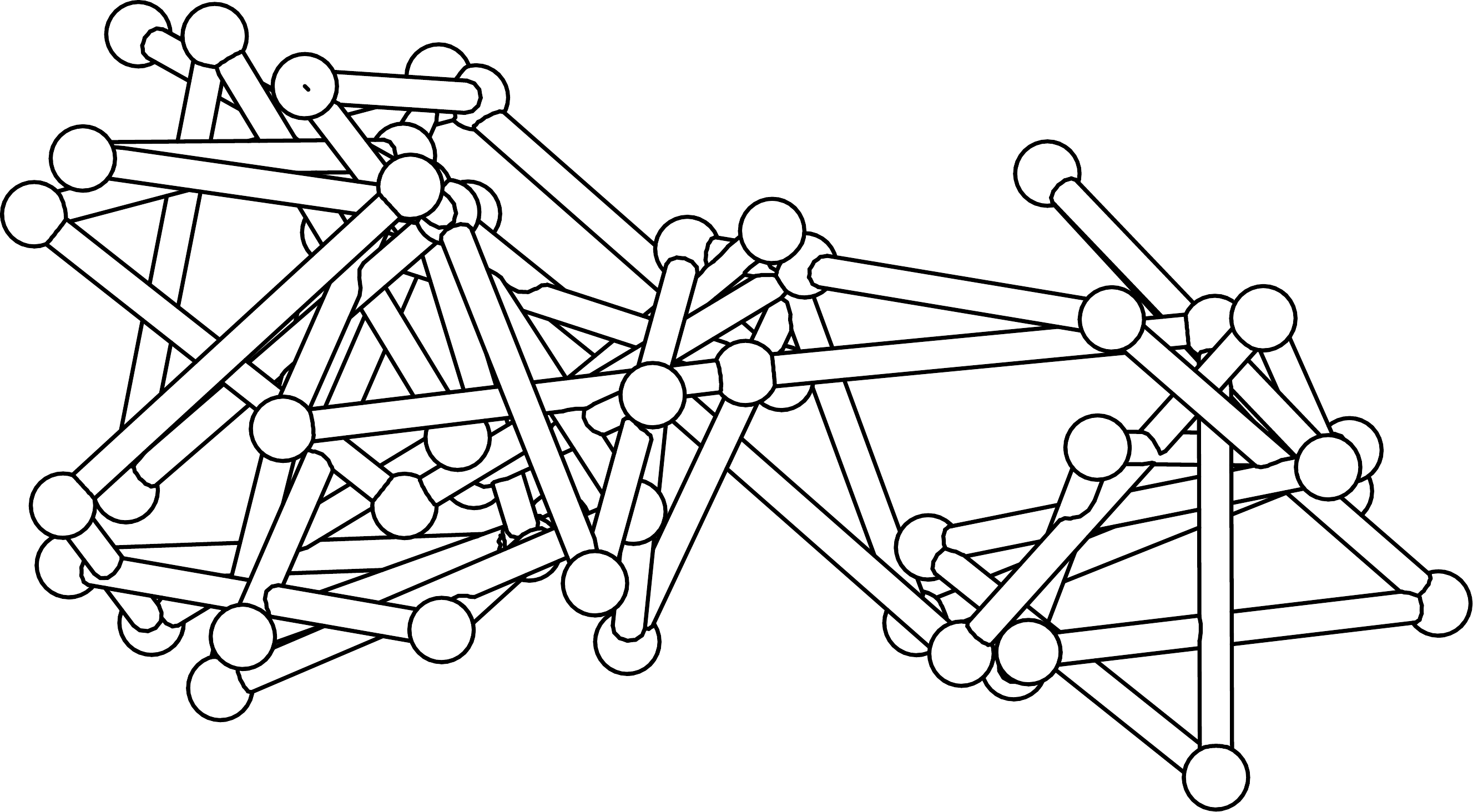}\\
			52 unit sticks
		\end{minipage} \\[2ex]
		\begin{minipage}{0.45\textwidth}
			\centering
			\includegraphics[width=\textwidth]{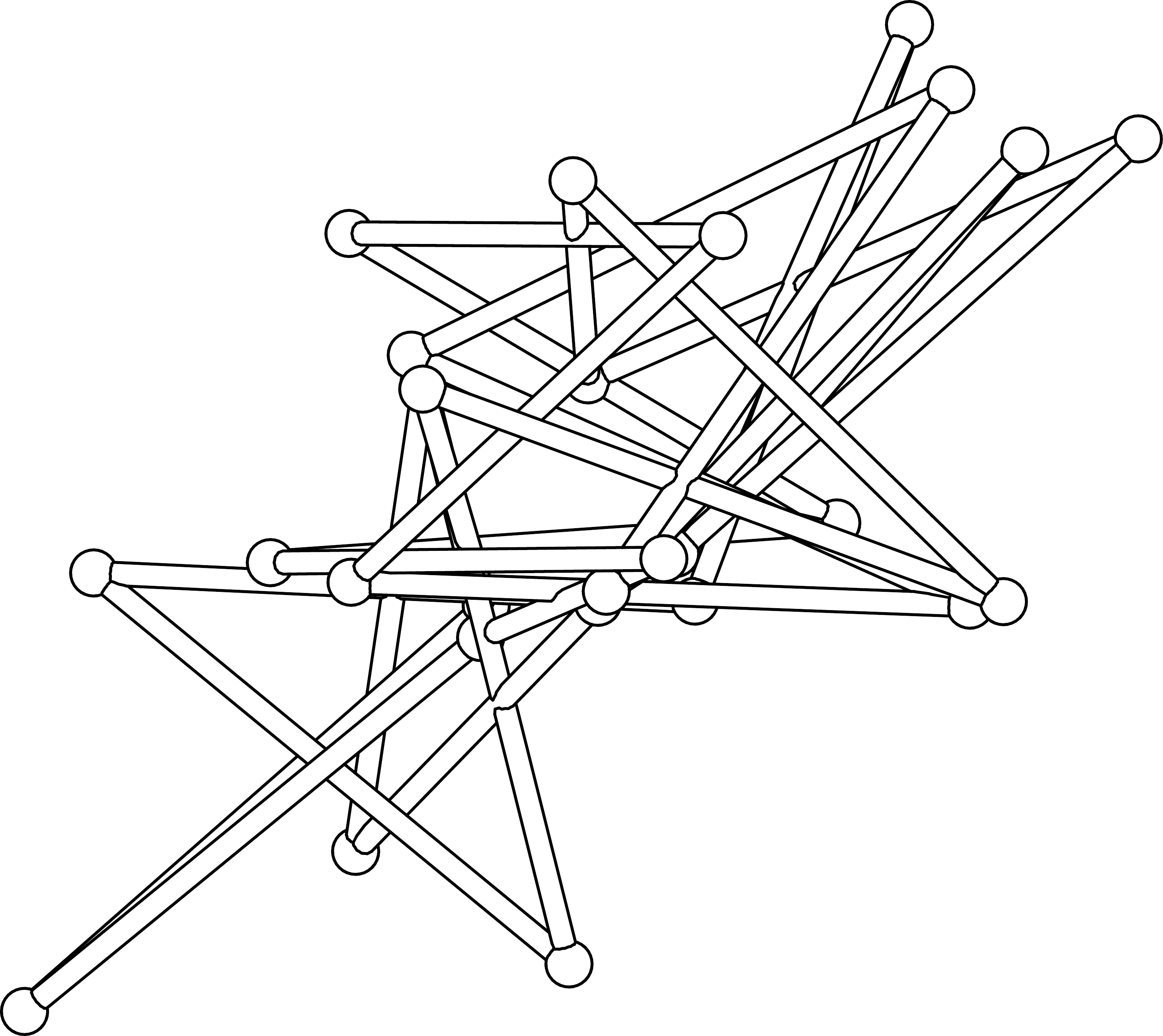} \\
			28 unit sticks
		\end{minipage}
		\hfil
		\begin{minipage}{0.45\textwidth}
			\centering
			\includegraphics[width=\textwidth]{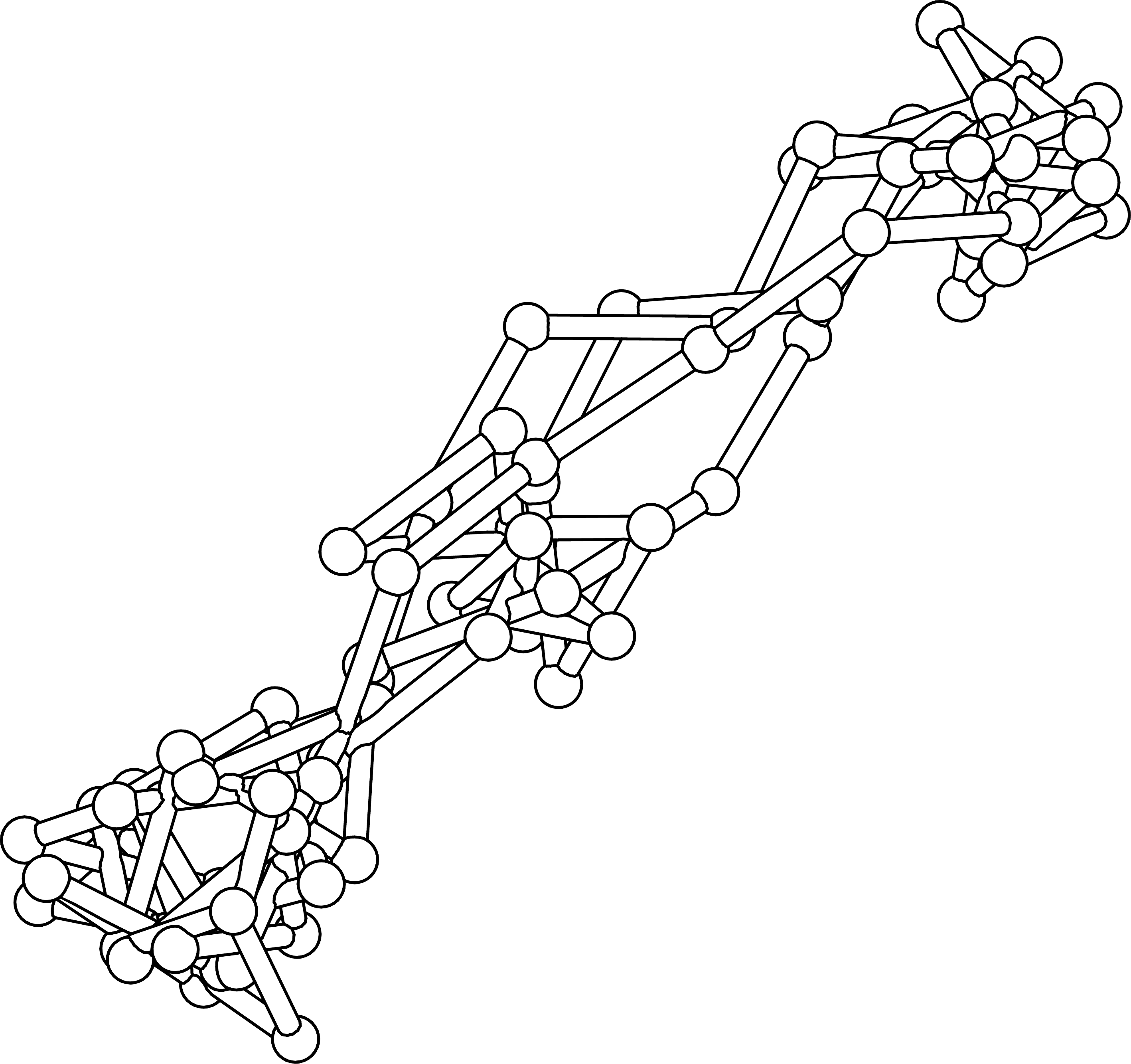} \\
			74 unit sticks
		\end{minipage}
	\end{center}
	\caption{Unit stick embeddings of the four untabulated knots from Figure~\ref{fig:complicated examples 1}. We
		do not claim that these are minimal, but we think those of the smaller knots are not too far from minimal,
		while the larger knots appear to need more minimisation.
	}\label{fig:complicated examples 4}
\end{figure}

\begin{figure}
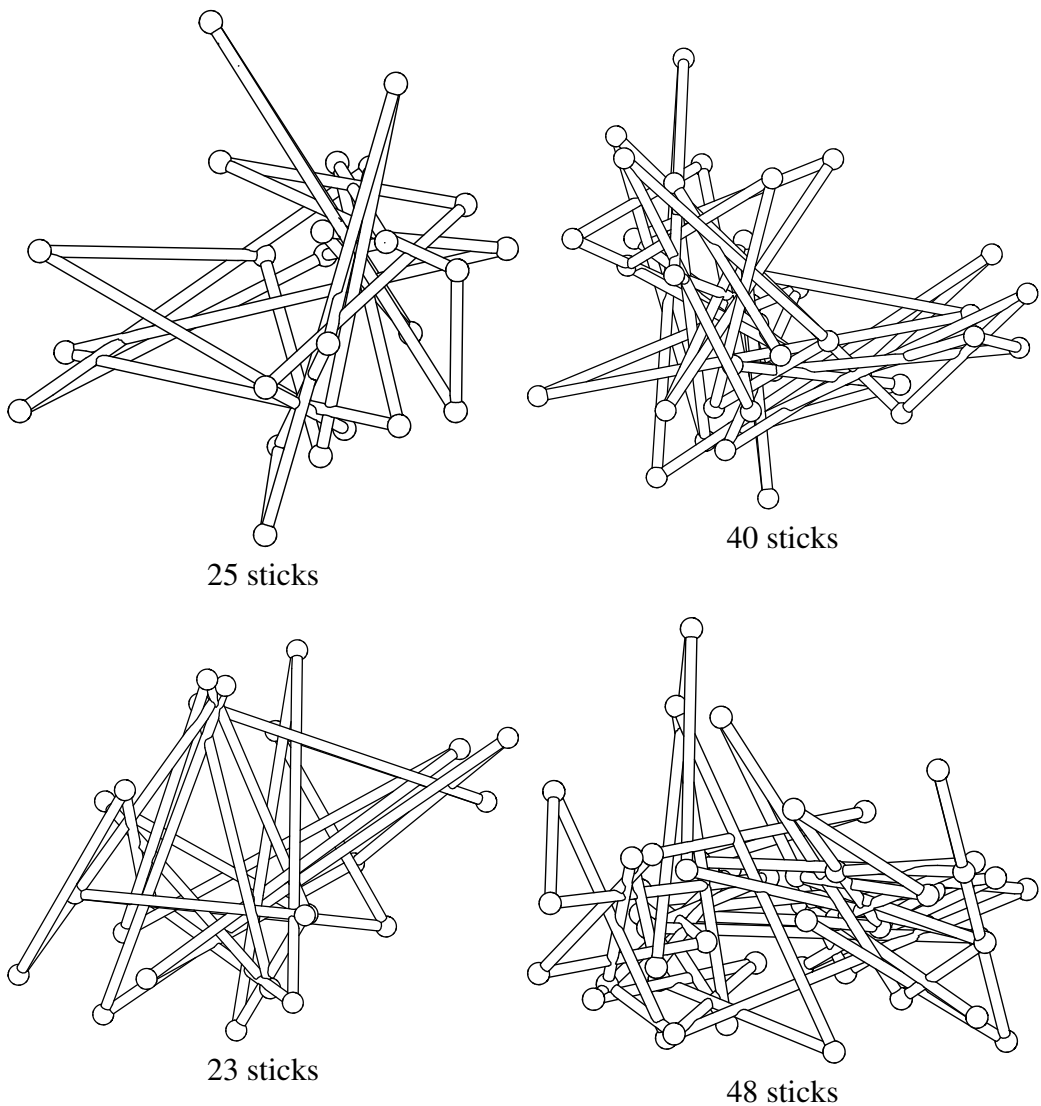

	\begin{center}
		\begin{minipage}{0.45\textwidth}
			\centering
			\includegraphics[width=\textwidth]{figs/stick_n32.pdf} \\
			25 sticks
		\end{minipage}
		\begin{minipage}{0.45\textwidth}
			\centering
			\includegraphics[width=\textwidth]{figs/stick_n64.pdf} \\
			40 sticks
		\end{minipage} \\[2ex]
		\begin{minipage}{0.45\textwidth}
			\centering
			\includegraphics[width=\textwidth]{figs/stick_a32.pdf} \\
			23 sticks
		\end{minipage}
		\begin{minipage}{0.45\textwidth}
			\centering
			\includegraphics[width=\textwidth]{figs/stick_a64.pdf} \\
			48 sticks
		\end{minipage}
	\end{center}
	\caption{Stick embeddings of the four untabulated knots from \autoref{fig:complicated examples 1}.
		We believe that should not be too far from the minimal embeddings. We also note that in each case the bound on stick number is smaller than
		the crossing number.
	}\label{fig:complicated examples 5}
\end{figure}

\clearpage

\bibliography{stickknots-special,stickknots}

\begin{thebibliography}{10}

\bibitem{adamsStickNumbersComposition1997}
Colin Adams, Bevin~M. Brennan, Deborah~L. Greilsheimer, and Alexander~K. Woo.
\newblock Stick numbers and composition of knots and links.
\newblock {\em Journal of Knot Theory and Its Ramifications}, 6(2):149--161,
  1997.

\bibitem{aragaodecarvalhoPolymers$g|varphi|^4$Theory1983}
Carlos {Arag{\~a}o de Carvalho}, Sergio Caracciolo, and J{\"o}rg Fr{\"o}hlich.
\newblock Polymers and {$g|\varphi|^4$} theory in four dimensions.
\newblock {\em Nuclear Physics B}, 215(2):209--248, 1983.

\bibitem{aragaodecarvalhoNewMonteCarloApproach1983}
Carlos Arag{\~a}o De~Carvalho and Sergio Caracciolo.
\newblock A new {{Monte-Carlo}} approach to the critical properties of
  self-avoiding random walks.
\newblock {\em Journal de Physique}, 44(3):323--331, 1983.

\bibitem{ashtonKnotTighteningConstrained2011}
Ted Ashton, Jason Cantarella, Michael Piatek, and Eric~J. Rawdon.
\newblock Knot tightening by constrained gradient descent.
\newblock {\em Experimental Mathematics}, 20(1):57--90, 2011.

\bibitem{baaderCoxeterGroupsMeridional2021}
Sebastian Baader, Ryan Blair, and Alexandra Kjuchukova.
\newblock Coxeter groups and meridional rank of links.
\newblock {\em Mathematische Annalen}, 379(3-4):1533--1551, April 2021.

\bibitem{bergRandomPathsRandom1981}
Bernd~A. Berg and D.~Foerster.
\newblock Random paths and random surfaces on a digital computer.
\newblock {\em Physics Letters B}, 106(4):323--326, November 1981.

\bibitem{blairKnotsExactly102020}
Ryan Blair, Thomas~D. Eddy, Nathaniel Morrison, and Clayton Shonkwiler.
\newblock Knots with exactly 10 sticks.
\newblock {\em Journal of Knot Theory and Its Ramifications}, 29(3):2050011,
  2020.

\bibitem{blairCoxeterQuotientsKnot2025}
Ryan Blair, Alexandra Kjuchukova, and Nathaniel Morrison.
\newblock Coxeter quotients of knot groups through 16 crossings.
\newblock {\em Experimental Mathematics}, 34(1):27--37, 2025.

\bibitem{boileauNombrePontsGenerateurs1985}
Michel Boileau and Heiner Zieschang.
\newblock Nombre de ponts et g{\'e}n{\'e}rateurs m{\'e}ridiens des entrelacs de
  {{Montesinos}}.
\newblock {\em Commentarii Mathematici Helvetici}, 60(2):270--279, 1985.

\bibitem{CGAL}
Herv{\'e} Br{\"o}nnimann, Andreas Fabri, Geert-Jan Giezeman, Susan Hert,
  Michael Hoffmann, Lutz Kettner, Sylvain Pion, and Stefan Schirra.
\newblock {2D} and {3D} linear geometry kernel.
\newblock In {\em {CGAL} User and Reference Manual}. {CGAL Editorial Board},
  {6.0.1} edition, 2024.

\bibitem{buckThicknessCrossingNumber1999}
Gregory Buck and Jonathan~K. Simon.
\newblock Thickness and crossing number of knots.
\newblock {\em Topology and its Applications}, 91(3):245--257, 1999.

\bibitem{burtonNext350Million2020}
Benjamin~A. Burton.
\newblock The next 350 million knots.
\newblock {\em LIPIcs, Volume 164, SoCG 2020}, 164:25:1--25:17, 2020.

\bibitem{calvoGeometricKnotTheory1998}
Jorge~Alberto Calvo.
\newblock {\em Geometric {{Knot Theory}}: {{The Classification}} of {{Spatial
  Polygons}} with a {{Small Number}} of {{Edges}}}.
\newblock PhD thesis, University of California, Santa Barbara, 1998.

\bibitem{calvoGeometricKnotSpaces2001}
Jorge~Alberto Calvo.
\newblock Geometric knot spaces and polygonal isotopy.
\newblock {\em Journal of Knot Theory and Its Ramifications}, 10(2):245--267,
  2001.

\bibitem{calvoCharacterizingPolygonsBbb2002}
Jorge~Alberto Calvo.
\newblock Characterizing polygons in {$\Bbb R^3$}.
\newblock In Jorge~Alberto Calvo, Kenneth~C. Millett, and Eric~J. Rawdon,
  editors, {\em Physical {{Knots}}: {{Knotting}}, {{Linking}}, and {{Folding
  Geometric Objects}} in $\Bbb R^3$}, volume 304 of {\em Contemporary
  {{Mathematics}}}, pages 37--53. American Mathematical Society, Providence,
  RI, USA, 2002.

\bibitem{calvoMinimalEdgePiecewise1998}
Jorge~Alberto Calvo and Kenneth~C. Millett.
\newblock Minimal edge piecewise linear knots.
\newblock In Andrzej Stasiak, Vsevolod Katritch, and Louis~H. Kauffman,
  editors, {\em Ideal {{Knots}}}, volume~19 of {\em Series on {{Knots}} and
  {{Everything}}}, pages 107--128. World Scientific Publishing, Singapore,
  1998.

\bibitem{dataverse-pretzel}
Jason Cantarella, Andrew Rechnitzer, Henrik Schumacher, and Clayton Shonkwiler.
\newblock {Low stick number polygons for a family of pretzel knots}.
\newblock \url{https://doi.org/10.7910/DVN/S0JQVH}, 2025.

\bibitem{dataverse-big}
Jason Cantarella, Andrew Rechnitzer, Henrik Schumacher, and Clayton Shonkwiler.
\newblock {Low stick number polygons for four large knots}.
\newblock \url{https://doi.org/10.7910/DVN/JXGUXD}, 2025.

\bibitem{dataverse-torus}
Jason Cantarella, Andrew Rechnitzer, Henrik Schumacher, and Clayton Shonkwiler.
\newblock {Low stick number polygons for torus knots}.
\newblock \url{https://doi.org/10.7910/DVN/GCNJLI}, 2025.

\bibitem{dataverse-twist}
Jason Cantarella, Andrew Rechnitzer, Henrik Schumacher, and Clayton Shonkwiler.
\newblock {Low stick number polygons for twist knots}.
\newblock \url{https://doi.org/10.7910/DVN/QMQ9R0}, 2025.

\bibitem{dataverse-table}
Jason Cantarella, Andrew Rechnitzer, Henrik Schumacher, and Clayton Shonkwiler.
\newblock {Low stick number polygons representing all knot types through 13
  crossings}.
\newblock \url{https://doi.org/10.7910/DVN/NFJIII}, 2025.

\bibitem{cantarellaComputingConformalBarycenter2022}
Jason Cantarella and Henrik Schumacher.
\newblock Computing the conformal barycenter.
\newblock {\em SIAM Journal on Applied Algebra and Geometry}, 6(3):503--530,
  September 2022.

\bibitem{REAPR}
Jason Cantarella, Henrik Schumacher, and Clayton Shonkwiler.
\newblock Hard unknots are often easy from a different perspective.
\newblock In preparation.

\bibitem{snappy}
Marc Culler, Nathan~M. Dunfield, Matthias Goerner, and Jeffrey~R. Weeks.
\newblock Snap{P}y, a computer program for studying the geometry and topology
  of $3$-manifolds.
\newblock \url{https://snappy.computop.org}, 2009--2025.

\bibitem{diaoLowerBoundsLengths2003}
Yuanan Diao.
\newblock The lower bounds of the lengths of thick knots.
\newblock {\em Journal of Knot Theory and Its Ramifications}, 12(1):1--16,
  2003.

\bibitem{stick-knot-gen}
Thomas~D. Eddy.
\newblock stick-knot-gen, efficiently generate and classify random stick knots
  in confinement.
\newblock \url{https://github.com/thomaseddy/stick-knot-gen}.

\bibitem{eddyImprovedStickNumber2019}
Thomas~D. Eddy.
\newblock Improved {{Stick Number Upper Bounds}}.
\newblock Master's thesis, Colorado State University, Fort Collins, CO, USA,
  2019.
\newblock Available at \url{https://hdl.handle.net/10217/195411}.

\bibitem{eddyNewStickNumber2022}
Thomas~D. Eddy and Clayton Shonkwiler.
\newblock New stick number bounds from random sampling of confined polygons.
\newblock {\em Experimental Mathematics}, 31(4):1373--1395, 2022.

\bibitem{grosbergFlorytypeTheoryKnotted1996}
Alexander~Yu. Grosberg, Alexander Feigel, and Yitzhak Rabin.
\newblock Flory-type theory of a knotted ring polymer.
\newblock {\em Physical Review E}, 54(6):6618--6622, December 1996.

\bibitem{huhStickNumbers2bridge2011}
Youngsik Huh, Sungjong No, and Seungsang Oh.
\newblock Stick numbers of 2-bridge knots and links.
\newblock {\em Proceedings of the American Mathematical Society},
  139(11):4143--4152, 2011.

\bibitem{huhUpperBoundStick2011}
Youngsik Huh and Seungsang Oh.
\newblock An upper bound on stick number of knots.
\newblock {\em Journal of Knot Theory and Its Ramifications}, 20(5):741--747,
  2011.

\bibitem{jansevanrensburgBFACFstyleAlgorithmsPolygons2011}
Esias~J. {Janse van Rensburg} and Andrew Rechnitzer.
\newblock {{BFACF-style}} algorithms for polygons in the body-centered and
  face-centered cubic lattices.
\newblock {\em Journal of Physics A: Mathematical and Theoretical},
  44(16):165001, April 2011.

\bibitem{jansevanrensburgBFACFAlgorithmKnotted1991}
Esias~J. {Janse van Rensburg} and Stuart~G. Whittington.
\newblock The {{BFACF}} algorithm and knotted polygons.
\newblock {\em Journal of Physics A: Mathematical and General},
  24(23):5553--5567, December 1991.

\bibitem{jinPolygonIndicesSuperbridge1997}
Gyo~Taek Jin.
\newblock Polygon indices and superbridge indices of torus knots and links.
\newblock {\em Journal of Knot Theory and Its Ramifications}, 6(2):281--289,
  1997.

\bibitem{johnsonStickRamseyNumbers2013}
Maribeth Johnson, Stacy~Nicole Mills, and Rolland Trapp.
\newblock Stick and {{Ramsey}} numbers of torus links.
\newblock {\em Journal of Knot Theory and Its Ramifications}, 22(7):1350027,
  2013.

\bibitem{MR899057}
Louis~H. Kauffman.
\newblock State models and the {J}ones polynomial.
\newblock {\em Topology}, 26(3):395--407, 1987.

\bibitem{kuiperNewKnotInvariant1987}
Nicolaas~H. Kuiper.
\newblock A new knot invariant.
\newblock {\em Mathematische Annalen}, 278(1--4):193--209, 1987.

\bibitem{lipshitz-blender}
Robert Lipshitz.
\newblock {Mathematical Illustration in Blender}.
\newblock \url{https://pages.uoregon.edu/lipshitz/Blender/}, 2025.

\bibitem{knotinfo}
Charles Livingston and Allison~H. Moore.
\newblock {KnotInfo: Table of Knot Invariants}.
\newblock \url{https://knotinfo.org}, 2025.

\bibitem{millettKnottingRegularPolygons1994}
Kenneth~C. Millett.
\newblock Knotting of regular polygons in {$3$}-space.
\newblock {\em Journal of Knot Theory and Its Ramifications}, 3(3):263--278,
  1994.

\bibitem{millettMonteCarloExplorations2000}
Kenneth~C. Millett.
\newblock Monte {{Carlo}} explorations of polygonal knot spaces.
\newblock In Cameron~McA. Gordon, Vaughan F.~R. Jones, Louis~H. Kauffman, Sofia
  Lambropoulou, and J{\'o}zef~H. Przytycki, editors, {\em Knots in {{Hellas}}
  '98: {{Proceedings}} of the {{International Conference}} on {{Knot Theory}}
  and {{Its Ramifications}}}, volume~24 of {\em Series on {{Knots}} and
  {{Everything}}}, pages 306--334. World Scientific Publishing, Singapore,
  2000.

\bibitem{millettPhysicalKnotTheory2012}
Kenneth~C. Millett.
\newblock Physical knot theory: {{An}} introduction to the study of the
  influence of knotting on the spatial characteristics of polymers.
\newblock In Louis~H. Kauffman, Sofia Lambropoulou, Slavik Jablan, and
  J{\'o}zef~H. Przytycki, editors, {\em Introductory {{Lectures}} on {{Knot
  Theory}}}, volume~46 of {\em Series on {{Knots}} and {{Everything}}}, pages
  346--378. World Scientific Publishing, Singapore, 2012.

\bibitem{millettEnergyRopelengthOther2003}
Kenneth~C. Millett and Eric~J. Rawdon.
\newblock Energy, ropelength, and other physical aspects of equilateral knots.
\newblock {\em Journal of Computational Physics}, 186(2):426--456, 2003.

\bibitem{Wirt_Hm}
Nathaniel Morrison.
\newblock {Wirt\_Hm}.
\newblock \url{https://github.com/ThisSentenceIsALie/Wirt_Hm}, 2022--2025.

\bibitem{musickMinimalBridgeProjections2012}
Chad Musick.
\newblock Minimal bridge projections for 11-crossing prime knots.
\newblock Preprint, \href{https://arxiv.org/abs/1909.00917}{\tt arXiv:1208.4233
  [math.GT]}, 2012.

\bibitem{negamiRamseyTheoremsKnots1991}
Seiya Negami.
\newblock Ramsey theorems for knots, links and spatial graphs.
\newblock {\em Transactions of the American Mathematical Society},
  324(2):527--541, 1991.

\bibitem{randellConformationSpacesMolecular1988}
Richard Randell.
\newblock Conformation spaces of molecular rings.
\newblock In R.~C. Lacher, editor, {\em {{MATH}}/{{CHEM}}/{{COMP}} 1987},
  volume~54 of {\em Studies in {{Physical}} and {{Theoretical Chemistry}}},
  pages 141--156. Elsevier, Amsterdam, 1988.

\bibitem{randellMolecularConformationSpace1988}
Richard Randell.
\newblock A molecular conformation space.
\newblock In R.~C. Lacher, editor, {\em {{MATH}}/{{CHEM}}/{{COMP}} 1987},
  volume~54 of {\em Studies in {{Physical}} and {{Theoretical Chemistry}}}.
  Elsevier, Amsterdam, 1988.

\bibitem{rawdonUpperBoundsEquilateral2002}
Eric~J. Rawdon and Robert~G. Scharein.
\newblock Upper bounds for equilateral stick numbers.
\newblock In Jorge~Alberto Calvo, Kenneth~C. Millett, and Eric~J. Rawdon,
  editors, {\em Physical {{Knots}}: {{Knotting}}, {{Linking}}, and {{Folding
  Geometric Objects}} in {$\Bbb R^3$}}, volume 304 of {\em Contemporary
  {{Mathematics}}}, pages 55--75. American Mathematical Society, Providence,
  RI, USA, 2002.

\bibitem{andrew-minimal-knots}
Andrew Rechnitzer.
\newblock Minimal knots on cubic lattices.
\newblock \url{https://personal.math.ubc.ca/~andrewr/knots/minimal_knots.html},
  2011.

\bibitem{schareinInteractiveTopologicalDrawing1998}
Robert~G. Scharein.
\newblock {\em Interactive {{Topological Drawing}}}.
\newblock PhD thesis, University of British Columbia, Vancouver, BC, Canada,
  1998.

\bibitem{KnotPlot}
Robert~G. Scharein.
\newblock {KnotPlot}.
\newblock \url{https://knotplot.com}, 1998--2025.

\bibitem{schareinBoundsMinimumStep2009}
Robert~G. Scharein, Kai Ishihara, Javier Arsuaga, Yuanan Diao, Koya Shimokawa,
  and Mariel Vazquez.
\newblock Bounds for the minimum step number of knots in the simple cubic
  lattice.
\newblock {\em Journal of Physics A: Mathematical and Theoretical},
  42(47):475006--25, November 2009.

\bibitem{knoodle}
Henrik Schumacher and Jason Cantarella.
\newblock Knoodle.
\newblock \url{https://github.com/HenrikSchumacher/Knoodle}, 2025.

\bibitem{shonkwilerNewComputationsSuperbridge2020}
Clayton Shonkwiler.
\newblock New computations of the superbridge index.
\newblock {\em Journal of Knot Theory and Its Ramifications}, 29(14):2050096,
  2020.

\bibitem{shonkwilerAllPrimeKnots2022}
Clayton Shonkwiler.
\newblock All prime knots through 10 crossings have superbridge index {$\leq
  5$}.
\newblock {\em Journal of Knot Theory and Its Ramifications}, 31(4):2250023,
  2022.

\bibitem{shonkwilerNewSuperbridgeIndex2022a}
Clayton Shonkwiler.
\newblock New superbridge index calculations from non-minimal realizations.
\newblock {\em Journal of Knot Theory and Its Ramifications}, 31(10):2250063,
  September 2022.

\bibitem{pyknotid}
Alexander~J. Taylor and other SPOCK~contributors.
\newblock pyknotid knot identification toolkit.
\newblock \url{https://github.com/SPOCKnots/pyknotid}, 2014--2023.

\bibitem{thistlethwaiteEnumerationClassificationPrime2025}
Morwen~B. Thistlethwaite.
\newblock The enumeration and classification of prime 20-crossing knots.
\newblock {\em Algebraic \& Geometric Topology}, 25(1):329--344, March 2025.

\bibitem{ubertiMinimalLinksCubic1998}
Riccardo Uberti, Esias~J. {Janse van Rensburg}, Enzo Orlandini, Maria~Carla
  Tesi, and Stuart~G. Whittington.
\newblock Minimal links in the cubic lattice.
\newblock In Stuart~G. Whittington, De~Witt Sumners, and Timothy Lodge,
  editors, {\em The {{IMA Volumes}} in {{Mathematics}} and Its
  {{Applications}}}, pages 89--100. Springer New York, New York, NY, 1998.

\end{thebibliography}

\end{document}